\newtheorem{theorem}{Theorem}[section] 
\newtheorem{lemma}[theorem]{Lemma} 
\newtheorem{prop}[theorem]{Proposition} 
\newtheorem{corollary}[theorem]{Corollary} 
\theoremstyle{definition} 
\newtheorem{remark}[theorem]{Remark} 
\newtheorem{definition}[theorem]{Definition} 
\newtheorem{example}[theorem]{Example} 
\newcommand{\fin}{\operatorname{Fin}}
\newcommand{\T}{\mathbb{T}}
\newcommand{\U}{\mathcal{U}}
\newcommand{\V}{\mathcal{V}}
\newcommand{\card}{\operatorname{card}}
\newcommand{\pH}{p^{\mathrm{H}}}
\newcommand{\dH}{d^{\mathrm{H}}}
\newcommand\R{\mathbb{R}} 
\newcommand\Z{\mathbb{Z}}
\newcommand\N{\mathbb{N}}
\newcommand{\id}{\mathrm{id}}
\newcommand{\be}{\begin{equation}}
\newcommand{\ee}{\end{equation}}
\newcommand{\hp}{h_{\mathfrak{p}}}
\newcommand{\hm}{h_{\mathfrak{m}}}
\newcommand{\hi}{h_{\mathfrak{i}}}
\newcommand{\tv}{\tau_{\mathrm{V}}}
\newcommand{\Q}{\mathbb Q}
\newcommand{\KK}{\mathcal K}
\newcommand{\abs}[1]{\left\vert#1\right\vert}
\newcommand{\vp}{\varphi}
\newcommand{\B}{{\mathcal{B}}}
\newcommand{\A}{{\mathcal{A}}}
\newcommand{\cl}{\operatorname{cl}}
\newcommand{\ep}{{\varepsilon}}
\newcommand\restr[1]{\raisebox{-.1ex}{$\big\vert$}_{#1}}
\newcommand{\orbn}{{\operatorname{Orb}_n}}
\newcommand\orb[1]{{\operatorname{Orb}_{#1}}}
\newcommand{\coinn}{{\operatorname{Coin}_n}}
\newcommand\coin[1]{{\operatorname{Coin}_{#1}}}
\newcommand{\hsep}{h^{\operatorname{sep}}}
\newcommand{\hspan}{h^{\operatorname{span}}}
\newcommand{\hKT}{h_{\operatorname{KTRT}}}
\newcommand{\hAK}{h_{\operatorname{AKEK}}}
\newcommand{\pnH}{p_n^{\mathrm{H}}}
\begin{document}

\begin{abstract}
The Adler-Konheim-McAndrew type definitions and the Bowen-Dinaburg-Hood type definitions of parametric topological entropy will be considered on orbits and coincidence orbits of nonautonomous multivalued maps in compact Hausdorff spaces. Their mutual relationship and their link to various further types of definitions like those of (parametric) preimage entropy, will be investigated. In this way, several recent results of the other authors will be generalized and extended into a new setting.
\end{abstract}

\keywords{Parametric topological entropy, multivalued nonautonomous maps, admissible maps, orbits, coincidence orbits, compact Hausdorff spaces, uniform spaces, preimage entropies}

\thanks{The authors were supported by the Grant IGA\_PrF\_2024\_006 ``Mathematical Models'' of the Internal Grant Agency of Palack\'y University in Olomouc.}

\subjclass[2020]{Primary 28C15, 37B40, 54C70; Secondary 37B55, 37C15}

\title[Parametric topological entropy on orbits]{Parametric topological entropy on orbits of arbitrary multivalued maps in compact Hausdorff spaces}

\author[J. Andres]{Jan Andres}
\address{Department of Mathematical Analysis and Applications of Mathematics, Faculty of Science, Palack\'y University, 17. listopadu 12, 771 46 Olomouc, Czech Republic}
\author[P. Ludv\'\i k]{Pavel Ludv\'\i k}
%\fnref{fn1}}
\email{pavel.ludvik@upol.cz}

\maketitle

\section{Introduction (historical remarks and formulation of problems)}

An alternative title of our paper can read: Topological entropy of multivalued nonautonomous dynamical systems in compact Hausdorff spaces. Topological entropy is one of the most efficient tools for measuring the complexity behaviour of dynamical systems. Its first definition was given in 1965 by Adler, Konheim and McAndrew for single-valued continuous maps in compact topological spaces (see \cite{AKM}). Another definition was introduced in 1971 by Bowen (see \cite{Bo}), and independently by Dinaburg (\cite{Di}), for uniformly continuous maps in not necessarily compact metric spaces. Bowen proved in \cite{Bo} the equivalence of his definition with the one in \cite{AKM} in compact metric spaces. In 1974, Hood extended in \cite{Ho} the Bowen-type definition (via both the notions of separated and spanning subsets) to (topological) uniform spaces.

Later on, Kolyada and Snoha generalized in 1996 the definitions in \cite{AKM} and \cite{Bo,Di} to nonautonomous discrete dynamical systems, given by a sequence of single-valued continuous self-maps, and they have shown that, besides other things, all the definitions become equivalent in compact metric spaces (see \cite{KS}). Very recently, Shao generalized in \cite{Sh} the Bowen-Dinaburg type definitions of Kolyada and Snoha to compact uniform spaces in the sense of Hood and again proved the equivalence of all given definitions under consideration

However, the situation for multivalued maps is much more delicate (see e.g. \cite{AK,A1,A2, AJ,AL1,CMM,CP,EK,KM,KT,RS,RT,SV,WZZ}, for an autonomous case, and e.g. \cite{A3,AL2,AL3}, for a nonautonomous case). Unlike to the mentioned equivalence in the single-valued case, the various definitions of topological entropy for multivalued maps can differ significantly each to other. For instance, those for the mapping $\vp(x)=[0,1]$, for every $x\in[0,1]$, can be equal to $0$ in \cite{A1,AL1,CMM}, but those in \cite{AK,CP,EK,KM,KT,RT} equal to $\infty$. This difference is essentially due to the fact that the definitions deal in the first (zero) case with the image values of given multivalued maps (i.e. sets), while in the second (infinity) case with their (singleton) orbits, resp. orbits of coincidences.

The main aim of the present paper is therefore to generalize jointly the definitions of topological entropy in \cite{AK,EK,KT,RT}, for multivalued autonomous maps in compact metric spaces, and in \cite{KS,Sh}, for single-valued nonautonomous maps in compact uniform spaces, to those on orbits of arbitrary multivalued nonautonomous maps in compact Hausdorff spaces. Such an entropy will be called parametric (whence the title).

Furthermore, apart from some basic properties, we will prove the equivalence of all the generalized related variants (i.e. the Adler-Konheim-McAndrew type and the Bowen-Dinaburg-Hood type), as in the single-valued continuous case.

Because of easier calculations and estimates, we will also consider the parametric topological entropy on coincidence orbits for a special subclass of upper semicontinuous maps, for instance, of admissible maps in the sense of G{\'{o}}rniewicz (see e.g. \cite[Chapter I.G]{AG}), and indicate its relationship to preimage entropies introduced in 1995 by Hurley in \cite{Hu} and further studied e.g. in \cite{CN,HWZ,WZ,YWW,ZZH,FFN,Ni,NP,WZZ}. We will also introduce one definition (see Definition \ref{d:4.2} bellow) having no standard analog in the single-valued case.

The paper is organized as follows. At first, some preliminaries of topological spaces and elements of multivalued analysis will be recalled. Then the Bowen-Dinaburg-Hood type and the Adler-Konheim-McAndrew type definitions of parametric topological entropy on orbits of nonautonomous maps in compact topological spaces will be treated. For multivalued maps determined by selected pairs of single-valued nonautonomous maps, we will also consider parametric topological entropy on coincidence orbits. If one mapping in the selected pairs is identity, then the relationship with various sorts of parametric preimage entropy can be investigated. Several illustrative examples and clarifying comments will be supplied.

\section{Preliminaries}\label{s:2}

The topological spaces under our consideration will be mostly \emph{Hausdorff} (\emph{$T_2$-spaces}), i.e. if $(X,\tau)$ is a topological space with a given topology $\tau$, then for all distinct points $x,y\in X$, there exist disjoint sets $U,V \in\tau$ such that $x\in U$ and $y\in V$.

A collection $\A$ of subsets of a topological space space $(X,\tau)$ is a \emph{cover} of $X$ if their union is all of $X$, i.e. $X = \bigcup\A$. A collection $\A$ is an \emph{open cover} of $X$ if $X = \bigcup\A$ and additionally $\A\subset\tau$. If $\B\subset\A$ is also a cover of $X$, then we say that the cover $\A$ has a \emph{subcover} $\B$ of $X$.

A topological space $(X,\tau)$ is \emph{compact} if every open cover of $X$ has a finite subcover. Every compact Hausdorff space is well known to be uniformizable.

More concretely, every compact Hausdorff space is a complete uniform space with respect to the unique uniformity compatible with the topology. Thus, all the properties of uniform spaces can be employed in compact Hausdorff spaces.

Let us therefore recall the basic properties of uniform spaces with their relation to compact Hausdorff spaces.

\begin{definition}
If $\mathcal{P}$ is a family of pseudometrics on a set $S$ and $p$ is a pseudometric on $S$, then we write $p\ll\mathcal{P}$, provided
\begin{equation*}
\forall\ep>0\, \exists p_\ep\in\mathcal{P} \,\exists\delta>0 \,\forall x,y\in S: p_\ep(x,y)<0 \Rightarrow p(x,y)<\ep.
\end{equation*}

\end{definition}

\begin{definition}\label{d:uniform1}
A \emph{uniform structure} (or a \emph{uniformity}) on a nonempty set $S$ is a family $\U$ of pseudometrics on $S$ with the properties:
\begin{enumerate}
	\item[(U1)] If $p_0,p_1\in\U$, then $\max\{p_0,p_1\}\in\U$.
	\item[(U2)] If $p$ is a pseudometric on $S$ such that $p\ll\U$, then $p\in\U$.
	\item[(U3)] If $x,y\in S$, $x\neq y$, then there is $p\in\U$ such that $p(x,y)>0$.
\end{enumerate}
A \emph{uniform space $X$} is a nonempty set $S$ (set of points of $X$) together with a uniformity on $S$.
\end{definition}

Let $\U = \U(X)$ denote the uniformity of $X$ (i.e., the family of pseudometrics). Denoting, for $p\in\U$, $x\in X$ and $r>0$,
\begin{equation*}
B_p(x,r):= \{y\in X: p(x,y)<r\},
\end{equation*}
the collection of sets $B_p(x,r)$, where $p\in\U$ and $x\in X$, $r>0$, is a base of a Hausdorff topology on the set of points of $X$.

The following generalisation of the Lebesgue covering lemma will be useful in the sequel.

\begin{lemma}[see {\cite[Theorem 33, and the paragraph before]{Ke}}]\label{l:Lebesgue}
Let $(X,\U)$ be a compact uniform space and $\A$ be an open cover of $X$. Then there exist $p\in\U$, $\delta>0$ such that, for every $x\in X$, there exists $A\in\A$ such that 
\begin{equation*}
B_p(x,\delta) \subset A.
\end{equation*}
\end{lemma}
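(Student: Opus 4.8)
The plan is to reduce the uniform-space statement to a finite situation by compactness, and then to exploit property (U1) to merge the finitely many relevant pseudometrics into a single $p\in\U$; the Lebesgue number $\delta$ then drops out as a minimum of finitely many radii.

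First I would use that the balls $B_p(x,r)$, with $p\in\U$ and $r>0$, form a base of the topology. For each point $x\in X$, since $\A$ covers $X$, choose $A_x\in\A$ with $x\in A_x$; as $A_x$ is open, there exist $p_x\in\U$ and $r_x>0$ with $B_{p_x}(x,2r_x)\subset A_x$, the factor $2$ being kept in reserve for a later triangle-inequality estimate. The smaller balls $B_{p_x}(x,r_x)$, ranging over all $x\in X$, again form an open cover of $X$.

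Next I would invoke compactness to extract a finite subcover $B_{p_{x_1}}(x_1,r_{x_1}),\dots,B_{p_{x_n}}(x_n,r_{x_n})$. The key step is then to merge the finitely many pseudometrics: by (U1) the pointwise maximum $p:=\max\{p_{x_1},\dots,p_{x_n}\}$ again belongs to $\U$, and I would set $\delta:=\min\{r_{x_1},\dots,r_{x_n}\}>0$. This is precisely where the uniform structure (rather than a single metric) is used, and I expect it to be the heart of the argument: without (U1) one could not manufacture a single admissible pseudometric controlling the whole cover simultaneously.

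Finally I would close the argument by a triangle-inequality estimate, uniformly in $x$. Given $x\in X$, pick an index $i$ with $x\in B_{p_{x_i}}(x_i,r_{x_i})$, so that $p_{x_i}(x_i,x)<r_{x_i}$. For any $y\in B_p(x,\delta)$ one has $p_{x_i}(x,y)\le p(x,y)<\delta\le r_{x_i}$, whence $p_{x_i}(x_i,y)\le p_{x_i}(x_i,x)+p_{x_i}(x,y)<2r_{x_i}$, i.e. $y\in B_{p_{x_i}}(x_i,2r_{x_i})\subset A_{x_i}$. Thus $B_p(x,\delta)\subset A_{x_i}\in\A$, as required. The only delicate bookkeeping is the factor $2$ together with the choice of $\delta$ as a minimum, which is exactly what makes the estimate close independently of the point $x$.
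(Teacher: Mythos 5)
Your proof is correct, and it is worth noting that the paper itself supplies no argument for this lemma at all: it is quoted from Kelley's book, where the result is proved in the entourage formulation of uniformities (Definition \ref{d:uniform2} of the paper). What you have written is precisely that classical argument translated into the pseudometric (gage) formulation (Definition \ref{d:uniform1}) in which the lemma is actually stated here, so your proof has the advantage of being self-contained and of matching the paper's language: your use of (U1) to replace the finitely many pseudometrics $p_{x_1},\dots,p_{x_n}$ by their maximum corresponds to Kelley's intersection of finitely many entourages, and your factor-of-$2$ bookkeeping via the triangle inequality corresponds to his choice of a symmetric entourage $V$ with $V\circ V\subset U$. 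Two small points you glossed over, neither a real gap: the existence of a ball \emph{centered at} $x$ inside the open set $A_x$ needs the standard remark that if $x\in B_q(y,s)\subset A_x$ then $B_q\bigl(x,s-q(y,x)\bigr)\subset B_q(y,s)$ by the triangle inequality (the base property alone only gives a ball containing $x$, not centered at it); and the passage from (U1), stated for two pseudometrics, to the maximum of $n$ of them is a finite induction. With those remarks made explicit, the argument is complete.
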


An equivalent (cf. e.g. \cite[p. 188--189]{Ke}) definition of a uniform structure can be given alternatively as follows. Let us note that this approach was employed e.g. by Hood in \cite{Ho} and Shao in \cite{Sh}.

\begin{definition}\label{d:uniform2}
A \emph{uniform structure} $\U$ on a nonempty set $S$ is a collection of subsets of $S\times S$ such that:
\begin{enumerate}
	\item[(i)] If $\gamma\in\U$, then $\Delta_S\subset\gamma$, where $\Delta_S:=\{(s,s): s\in S\}$ stands for the \emph{diagonal} in $S\times S$.
	\item[(ii)] If $\delta\in\U$ and $\delta\subset\gamma\subset S\times S$, then $\gamma\in\U$.
	\item[(iii)] If $\gamma,\delta\in\U$, then $\gamma\cap\delta\in\U$.
	\item[(iv)] If $\gamma\in\U$, then $\gamma^{-1}\in\U$, where $\gamma^{-1}:=\{(s,t):(t,s)\in\U\}$ is the \emph{inverse} of $\gamma\in\U$.
	\item[(v)] If $\gamma\in\U$, then there exists $\delta\in\U$ such that $\delta\circ\delta\subset\gamma$, where $\gamma\circ\delta:=\{(s,v):(s,t)\in\gamma,(t,v)\in\delta, \mbox{ for some $t\in S$}\}$ denotes the \emph{composition} of $\gamma,\delta\in\U$.
	\item[(vi)] \emph{Separation condition:} $\bigcap_{\gamma\in\U} \gamma = \Delta_S$.
\end{enumerate}
\end{definition}

A uniformity $\U$ induces the Hausdorff topology as a family of all subsets $M$ of $X$ such that, there exists a $\gamma\in\U$ satisfying $\gamma[x]\subset M$, where $\gamma[x]:= \{y\in X: (x,y)\in\gamma\}$.

For more details concerning the above types of topological spaces, see e.g. \cite{Is,Ke,Pa}.

It will be also convenient to recall some elementary properties of multivalued maps which will be employed in the sequel. Let $\vp:X\multimap Y$ be a multivalued map and $X,Y$ be topological spaces. All multivalued maps will always have nonempty values, i.e. $\vp:X\to 2^X\setminus\{\emptyset\}$. If the space $Y$ is compact and all the values of $\vp$ are closed, then we can write $\vp:X\to\KK(Y)$, where
\begin{equation*}
\KK(Y):= \{A\subset Y: A\mbox{ is nonempty and compact}\}.
\end{equation*}

The regularity of semicontinuous multivalued maps can be defined by means of the preimages of $\vp\colon X\multimap Y$, where
\begin{equation*}
\vp^{-1}_-(B):= \{x\in X:\vp(x)\subset B\} \mbox{ (``small'' preimage of $\vp$ at $B\subset Y$),}
\end{equation*}
resp.
\begin{equation*}
\vp^{-1}_+(B):= \{x\in X:\vp(x)\cap B\neq\emptyset\} \mbox{ (``large'' preimage of $\vp$ at $B\subset Y$).}
\end{equation*}

\begin{definition}
\begin{enumerate}
	\item[(i)] $\vp:X\multimap Y$ is said to be \emph{upper semicontinuous} (u.s.c.) if $\vp^{-1}_-(B)$ is open for every open $B\subset Y$, resp. $\vp^{-1}_+(B)$ is closed for every closed $B\subset Y$;
	\item[(i)] $\vp:X\multimap Y$ is said to be \emph{lower semicontinuous} (l.s.c.) if $\vp^{-1}_+(B)$ is open for every open $B\subset Y$, resp. $\vp^{-1}_-(B)$ is closed for every closed $B\subset Y$;
	\item[(iii)] $\vp:X\multimap Y$ is said to be \emph{continuous} if it is both u.s.c. and l.s.c.
\end{enumerate}
\end{definition}

Obviously, if $\vp:X\to Y$ is single-valued u.s.c. or l.s.c., then it is continuous. Moreover, if $\vp:X\to\KK(Y)$ is u.s.c. and $K\in\KK(X)$, then $\bigcup_{x\in K} \vp(x)\in\KK(Y)$ (see e.g. \cite[Proposition I.3.20]{AG}, \cite[Corollary 1.2.20]{HP})

\begin{prop}[cf. {\cite[Corollary 1.2.22 and Proposition 1.2.33]{HP}}]\label{p:2.1}
A compact map $\vp:X\to\KK(Y)$ (i.e. $\vp(X)$ is contained in a compact subset of $Y$) is u.s.c. if and only if its graph $\Gamma_\vp:= \{(x,y)\in X\times Y: y\in \vp(x)\}$ is closed.
\end{prop}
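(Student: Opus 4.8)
The plan is to establish the two implications separately, freely switching between the two equivalent formulations of upper semicontinuity supplied in the definition; they are genuinely interchangeable because of the set-theoretic identity $\vp^{-1}_-(Y\setminus B)=X\setminus\vp^{-1}_+(B)$, valid for every $B\subset Y$. Since $X$ and $Y$ are only assumed to be (Hausdorff) topological spaces, and need not be metrizable, I would phrase all limiting arguments in terms of nets rather than sequences.

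For the implication ``u.s.c.\ $\Rightarrow$ $\Gamma_\vp$ closed'', I would show that the complement of the graph is open. Fix $(x_0,y_0)\notin\Gamma_\vp$, so that $y_0\notin\vp(x_0)$. Because $\vp(x_0)\in\KK(Y)$ is compact and $Y$ is Hausdorff, the point $y_0$ and the set $\vp(x_0)$ can be separated by disjoint open sets $W\ni y_0$ and $V\supset\vp(x_0)$. Upper semicontinuity then makes $U:=\vp^{-1}_-(V)$ an open neighbourhood of $x_0$, and the box $U\times W$ is a neighbourhood of $(x_0,y_0)$ disjoint from $\Gamma_\vp$: if $(x,y)\in U\times W$ with $y\in\vp(x)$, then $x\in U$ forces $\vp(x)\subset V$, hence $y\in V\cap W=\emptyset$, a contradiction. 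This part uses only the compactness of the individual values $\vp(x_0)$ and the Hausdorff property of $Y$, not the global compactness hypothesis.

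For the converse ``$\Gamma_\vp$ closed $\Rightarrow$ u.s.c.'', I would verify that $\vp^{-1}_+(B)$ is closed whenever $B\subset Y$ is closed. Let $K\in\KK(Y)$ with $\vp(X)\subset K$, and take a net $(x_\alpha)$ in $\vp^{-1}_+(B)$ converging to some $x_0$; choosing $y_\alpha\in\vp(x_\alpha)\cap B$ for each $\alpha$, all the points $y_\alpha$ lie in the compact set $K$. The crucial --- and the only delicate --- step is here: compactness of $K$ lets me pass to a subnet along which $(y_\alpha)$ converges to some $y_0\in K$, and closedness of $B$ gives $y_0\in B$. The corresponding subnet of $(x_\alpha)$ still converges to $x_0$, so the net $\bigl((x_\alpha,y_\alpha)\bigr)$ has a subnet converging to $(x_0,y_0)$ inside the closed set $\Gamma_\vp$; hence $y_0\in\vp(x_0)$, whence $\vp(x_0)\cap B\neq\emptyset$ and $x_0\in\vp^{-1}_+(B)$. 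I expect this extraction of a convergent subnet to be the main obstacle, and it is precisely where the global compactness of $\vp(X)$ is indispensable --- without it a closed graph need not yield upper semicontinuity, as the map $x\mapsto\{1/x\}$ (with value $\{0\}$ at $x=0$) already shows.
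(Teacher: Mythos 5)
Your proof is correct. The paper itself does not prove this proposition at all --- it imports it from the cited reference [HP, Corollary 1.2.22 and Proposition 1.2.33] --- and your two-part argument (separating $y_0$ from the compact value $\vp(x_0)$ by disjoint open sets and using openness of $\vp^{-1}_-(V)$ for one direction; extracting a convergent subnet of $(y_\alpha)$ inside the ambient compact set $K$ and invoking closedness of $\Gamma_\vp$ for the other) is exactly the standard route taken in that reference, including the correct observation that only the converse direction needs global compactness of $\vp(X)$, as your example $x\mapsto\{1/x\}$ shows.
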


Furthermore, if $\vp_1:X\multimap Y$ and $\vp_2:Y\multimap Z$ are u.s.c. (resp. l.s.c., continuous), then so is their composition $\vp_2\circ\vp_1:X\multimap Z$ (see e.g. \cite[Proposition I.2.56]{HP}).

If, in particular, $\vp_1:X\to\KK(Y)$ and $\vp_2:Y\to\KK(Z)$ are u.s.c. (resp. continuous), then so is $\vp_2\circ\vp_1:X\to\KK(Z)$ (see e.g. \cite[Proposition 3.21]{AG}, \cite[Proposition 1.2.56 and Corollary 1.2.20]{HP}).

If $X$ is a Hausdorff topological space and $Y$ is a metric space, then $\vp:X\to\KK(Y)$ is continuous if and only if it is continuous with respect to the Hausdorff metric $\dH$, where
\begin{equation*}
\dH(A,B):= \max\{\sup_{a\in A}(\inf_{b\in B} d(a,b),\sup_{b\in B}(\inf_{a\in A} d(a,b))\},
\end{equation*}
for $A,B\in\KK(Y)$ (see e.g. \cite[Theorem I.3.64]{AG}, \cite[Corollary 1.2.69]{HP}).

\begin{prop}[\cite{Mi51}]\label{p:hyper2}
Let $(X,\U)$ be a uniform space and $(X,\tau)$ be a Hausdorff topological space, where the topology $\tau$ is induced by the uniformity $\U$, as indicated above. Then $(\KK(X),\tv)$, where $\tv$ stands for a Vietoris topology, is a Hausdorff topological space induced by the uniformity $\U^{\mathrm{H}}$, where
\begin{equation*}
\U^{\mathrm{H}}:= \{\pH: p\in \U\mbox{ is bounded}\}
\end{equation*}
and 
\begin{equation*}
\pH(A,B) := \max \{ \sup_{a\in A} (\inf_{b\in B}p(a,b)), \sup_{b\in B} (\inf_{a\in A} p(a,b))\}, \mbox{ $A,B \in \KK(X)$, $p\in\U$.}
\end{equation*}

\end{prop}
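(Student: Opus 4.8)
The plan is to proceed in three stages: first to show that each $\pH$ is a genuine bounded pseudometric on $\KK(X)$, then that the family $\U^{\mathrm H}$ generates a uniformity there, and finally --- the substantive point --- that the topology this uniformity induces coincides with the Vietoris topology $\tv$, Hausdorffness then being a byproduct. For the first stage I would record the auxiliary quantities $p(a,B):=\inf_{b\in B}p(a,b)$ and $e_p(A,B):=\sup_{a\in A}p(a,B)$, so that $\pH(A,B)=\max\{e_p(A,B),e_p(B,A)\}$. Since every $p\in\U$ is $\tau$-continuous and $A,B\in\KK(X)$ are compact, these infima and suprema are attained and, $p$ being bounded, finite; symmetry and $\pH(A,A)=0$ are immediate, while the triangle inequality reduces to the routine estimate $e_p(A,C)\le e_p(A,B)+e_p(B,C)$ obtained from $p(a,c)\le p(a,b)+p(b,c)$ by taking the appropriate infima and suprema.

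For the uniformity axioms of Definition \ref{d:uniform1}, the closure property (U1) follows from closure of $\U$ under max together with $\bigl(\max\{p,q\}\bigr)^{\mathrm H}\ge\max\{\pH,q^{\mathrm H}\}$, and (U2) by passing to the saturation of the family; the genuinely new axiom is the separation (U3). Given $A\neq B$ in $\KK(X)$, say $a_0\in A\setminus B$, the set $X\setminus B$ is a $\tau$-open neighbourhood of $a_0$, so by the description of $\tau$ through the balls $B_p(x,r)$ there are $p\in\U$ (which may be taken bounded, replacing $p$ by $\min\{p,1\}\in\U$) and $r>0$ with $B_p(a_0,r)\subset X\setminus B$, i.e. $p(a_0,b)\ge r$ for every $b\in B$; hence $e_p(A,B)\ge r$ and $\pH(A,B)>0$.

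The heart of the matter is the identification of the two topologies, which I would obtain from two inclusions. For ``$\tv\subset$ uniform topology'', take a basic Vietoris neighbourhood $\langle V_1,\dots,V_n\rangle:=\{C\in\KK(X):C\subset\bigcup_i V_i,\ C\cap V_i\neq\emptyset\}$ of $A_0$ with each $V_i$ open. Using the ball base of $\tau$, for the upper condition I cover the compact set $A_0$ and, combining finitely many pseudometrics through (U1), produce a single bounded $p\in\U$ and $\delta>0$ such that $p(x,A_0)<\delta$ forces $x\in\bigcup_i V_i$; for the lower condition I pick $c_i\in A_0\cap V_i$ and a ball $B_p(c_i,\delta)\subset V_i$. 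Enlarging $p$ and shrinking $\delta$ to serve both, any $B$ with $\pH(A_0,B)<\delta$ satisfies $B\subset\bigcup_i V_i$ and $B\cap V_i\neq\emptyset$, so $B_{\pH}(A_0,\delta)\subset\langle V_1,\dots,V_n\rangle$. Conversely, for ``uniform topology $\subset\tv$'', given $A\in B_{\pH}(A_0,r)$ with $\pH(A,A_0)=s<r$, I take a finite $\eta$-net $a_1,\dots,a_n$ of the compact set $A$ with $\eta<(r-s)/2$, set $V_i:=B_p(a_i,\eta)$, and verify via the triangle inequality that $A\in\langle V_1,\dots,V_n\rangle\subset B_{\pH}(A_0,r)$: membership in $\langle V_1,\dots,V_n\rangle$ yields $e_p(B,A)\le\eta$ and $e_p(A,B)\le 2\eta$, whence $\pH(A_0,B)\le s+2\eta<r$.

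I expect the main obstacle to be this last stage, and within it the inclusion ``$\tv\subset$ uniform topology'', because it requires manufacturing one bounded pseudometric that simultaneously controls the upper inclusion $B\subset\bigcup_i V_i$ and all $n$ lower intersection conditions; this forces a Lebesgue-number type compactness argument to be combined with the max-closure (U1) of $\U$. Once both inclusions are established, $\tv$ is precisely the topology induced by $\U^{\mathrm H}$, and since (U3) was verified the induced topology --- hence $\tv$ --- is Hausdorff, which completes the proof.
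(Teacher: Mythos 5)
Your proof is correct, but the key observation to make is that the paper does not prove Proposition \ref{p:hyper2} at all: it is imported as a known theorem of Michael \cite{Mi51}, so there is no internal argument to compare yours against. What you have produced is a self-contained verification of the cited result inside the paper's pseudometric (``gage'') formulation of uniformities (Definition \ref{d:uniform1}), and all three of your stages are sound: the pseudometric axioms for $\pH$ via the estimate $e_p(A,C)\le e_p(A,B)+e_p(B,C)$; the separation axiom (U3) from closedness of the compact set $B$ in the Hausdorff space $X$ together with the ball base of $\tau$; and the two-sided identification of the $\U^{\mathrm{H}}$-topology with $\tv$, where the inclusion of $\tv$ in the uniform topology is exactly the Lebesgue-number-type compactness argument you flag as the main obstacle, and the reverse inclusion follows from a finite $\eta$-net and the trianglele inequality, giving $\pH(A_0,B)\le s+2\eta<r$. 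Two fine points you handle correctly deserve emphasis, because a blind proof could silently go wrong there: first, $\U^{\mathrm{H}}$ as literally defined is only a base --- it is not closed under (U1)/(U2), since $(\max\{p,q\})^{\mathrm{H}}$ dominates but need not equal $\max\{\pH,q^{\mathrm{H}}\}$ --- so ``induced by the uniformity $\U^{\mathrm{H}}$'' must be read through the saturation, as you do; second, the restriction to bounded $p$ costs nothing because $\min\{p,1\}\in\U$ by (U2) and balls of radius at most $1$ are unchanged by truncation. The trade-off between the two routes is the expected one: the citation to \cite{Mi51} is shorter and covers a more general situation (Michael treats hyperspaces of closed sets and several hyperspace topologies at once), whereas your argument makes the paper self-contained and confirms that the statement holds verbatim in the specific pseudometric setting the paper actually works in.
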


It will be also convenient to recall the definition of an important class (from the point of view of applications in the framework of topological fixed point theory) of admissible maps in the sense of G{\'{o}}rniewicz (see e.g. \cite[Chapter I.4]{AG}).

\begin{definition}
Assume that we have a diagram $X\overset{r}{\Leftarrow} Z \overset{q}{\to} Y$, where $r\colon Z\Rightarrow X$ is a (single-valued) Vietoris map and $q\colon Z\to X$ is a (single-valued) continuous map. Then the map $\vp\colon X\multimap Y$ is called \emph{admissible} if it is induced by $\vp(x) = q(r^{-1}(x))$, for every $x\in X$. Thus, we determine the admissible map $\vp$ by the pair $(r,q)$ called an \emph{admissible (selected) pair}.
\end{definition}

Let us recall that a (single-valued) continuous map $r\colon Z\Rightarrow X$ is called the \emph{Vietoris map}, provided
\begin{enumerate}
	\item[(i)] $r$ is onto (i.e. $r(Z) = X$),
	\item[(ii)] $r$ is proper (i.e. $r^{-1}(K)$ is a nonempty compact set, for every compact $K\subset X$),
	\item[(iii)] the given set $r^{-1}(x)$ is \emph{acyclic}, or more precisely \emph{$\Q$-acyclic}, for every $x\in X$ (i.e.
	\begin{equation*}
H_n(r^{-1}(x)) = \begin{cases} 0,\mbox{ for $n>0$,}\\ \Q, \mbox{ for $n=0$,}\end{cases}
\end{equation*}
where $\Q$ stands for the field of rationals and $H_n$ denotes the $n$-dimensional \v{C}ech homology functor with rational coefficients).
\end{enumerate}

Roughly speaking, acyclicity means a homological equivalence to a one point space. When $\Q$ is replaced by the set of integers $\Z$, then we speak about \emph{$\Z$-acyclicity}. An important example of a compact $\Z$-acyclic set is an \emph{$R_\delta$-set}, defined as an intersection of a decreasing sequence of compact contractible sets, i.e. of compact sets homotopically equivalent to a one point space.

For our needs, we will also consider more ``liberal'' classes of multivalued maps, determined by pairs of single-valued maps $(r,q)$, where $r$ need not be necessarily Vietoris.

One can readily check that if $q$ is in particular an identity on $Z$, then $\vp$ reduces to the preimage of $r$, namely $\vp=r^{-1}$, where $X\overset{r}{\leftarrow}Z$.

It allows us to consider for compact $X=Y$ not only the sets of \emph{$n$-orbits} of $\vp$,
\begin{align*}
&\orb{1}(\vp):=X,\\
&\orbn(\vp):=\{(x_0,x_1,\ldots,x_{n-1})\in X^n: x_{i+1}\in\vp(x_i),i=0,1,\ldots,n-2\}, n\geq 2,
\end{align*}
but also the sets of \emph{$n$-orbits of coincidences} of $(r,q)$,
\begin{align*}
&\coin{1}(r,q):= Z,\\
&\coinn(r,q):=\{(z_0,z_1,\ldots,z_{n-1})\in Z^n: q(z_i)=r(z_{i+1}),i=0,1,\ldots,n-2\}, n\geq 2.
\end{align*}

For a sequence of multivalued maps $\vp_{0,\infty}:=\{\vp_j\}_{j=0}^{\infty}$ and for the pairs $(r,q)_{0,\infty}:=\{(r_j,q_j)\}_{j=0}^\infty$ of single-valued maps, i.e.
\begin{equation*}
X\overset{r_j}{\leftarrow}Z\overset{q_j}{\rightarrow} X,
\end{equation*}
where $r_j$ are surjective, for all $j\in\N\cup\{0\}$, we can define the \emph{$n$-orbits} of $\vp_{0,\infty}$ as
\begin{align*}
&\orb{1}(\vp_{0,\infty}):=X,\\
&\orbn(\vp_{0,\infty}):= \{(x_0,x_1,\ldots,x_{n-1})\in X^n: x_{i+1}\in\vp_i(x_i),i=0,1,\ldots,n-2\}, n\geq 2,
\end{align*}
and the \emph{$n$-orbits of coincidences} of $(r,q)_{0,\infty}$ as
\begin{align*}
&\coin{1}((r,q)_{0,\infty}):=Z,\\
&\coinn((r,q)_{0,\infty}):=\{(z_0,z_1,\ldots,z_{n-1})\in Z^n: q_i(z_i)=r_{i+1}(z_{i+1}),i=0,1,\ldots,n-2\},\\
& n\geq 2.
\end{align*}

If $\vp_{0,\infty}=\{\vp_j\}_{j=0}^{\infty}$ is determined by $(r,q)_{0,\infty}=\{(r_j,q_j)\}_{j=0}^{\infty}$ as $\vp_j=q_j\circ r_j^{-1}$, $j\in\N\cup\{0\}$, one can readily check that, because of $x_i=r_i(z_{i})$, $x_{i+1}=q_i(z_i)$ and $x_{i+1}\in\vp_i(x_i)$, $i=0,1,\ldots,n-2$; $n\geq 2$, two different $n$-orbits of coincidences can be determined by a single $n$-orbit, but not vice versa, that two different $n$-orbits cannot be determined by a single $n$-orbit of coincidences. In other words, the notion of orbits of coincidences is obviously more general than the one of orbits.
\begin{lemma}\label{l:1}
Let $X$ be a compact Hausdorff topological space and $\vp_j: X\to\KK(X)$, $j\in\N\cup\{0\}$, be a sequence $\vp_{0,\infty}$ of u.s.c. multivalued maps. Then, for $n\in\N$, the set $\orbn(\vp_{0,\infty})$ of $n$-orbits of $\vp_{0,\infty}$ is compact.
\end{lemma}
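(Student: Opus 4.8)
The plan is to show that $\orbn(\vp_{0,\infty})$ is a closed subset of the compact Hausdorff product space $X^n$, whence compactness follows at once, since closed subsets of compact spaces are compact. For $n=1$ there is nothing to prove, because $\orb{1}(\vp_{0,\infty})=X$ is compact by hypothesis; so from now on I assume $n\geq 2$.

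The key observation is that each defining condition $x_{i+1}\in\vp_i(x_i)$ is precisely the requirement $(x_i,x_{i+1})\in\Gamma_{\vp_i}$, where $\Gamma_{\vp_i}$ denotes the graph of $\vp_i$. Since $X$ is compact and $\vp_i\colon X\to\KK(X)$, the image $\vp_i(X)=\bigcup_{x\in X}\vp_i(x)$ is contained in the compact space $X$, so each $\vp_i$ is automatically a compact map. Hence Proposition \ref{p:2.1} applies and guarantees that every graph $\Gamma_{\vp_i}$ is a closed subset of $X\times X$.

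Next I would express the orbit set as a finite intersection of preimages of these graphs. For $i=0,1,\ldots,n-2$, let $\pi_i\colon X^n\to X\times X$ denote the continuous projection $(x_0,\ldots,x_{n-1})\mapsto(x_i,x_{i+1})$. Then
\begin{equation*}
\orbn(\vp_{0,\infty})=\bigcap_{i=0}^{n-2}\pi_i^{-1}(\Gamma_{\vp_i}).
\end{equation*}
Each set $\pi_i^{-1}(\Gamma_{\vp_i})$ is closed in $X^n$, being the preimage of the closed set $\Gamma_{\vp_i}$ under the continuous map $\pi_i$, and a finite intersection of closed sets is closed. Thus $\orbn(\vp_{0,\infty})$ is closed in $X^n$, and since $X^n$ is compact by Tychonoff's theorem, the orbit set is compact.

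I do not anticipate a genuine obstacle here; the only point deserving care is the verification that the compactness hypothesis of Proposition \ref{p:2.1} is satisfied, which is immediate because the ambient space $X$ is itself compact. Everything else is the standard ``closed graph plus continuous projection'' argument, and the passage from $n=2$ to general $n$ requires no induction, only the observation that finitely many closed constraints are imposed.
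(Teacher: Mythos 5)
Your proof is correct, but it follows a genuinely different route from the paper's. The paper argues by induction on $n$: assuming $\orb{n-1}(\vp_{0,\infty})$ is closed, it forms the auxiliary map $\tilde{\vp}_n:=\vp_{n-2}\circ\pi_{n-1}$ on $\orb{n-1}(\vp_{0,\infty})$ (u.s.c. as a composition of a u.s.c. map with a continuous projection) and identifies $\orbn(\vp_{0,\infty})$ with the graph $\Gamma_{\tilde{\vp}_n}$, which is closed by Proposition \ref{p:2.1}; the inductive hypothesis is what makes $\tilde{\vp}_n$ a compact map on a compact domain and turns closedness of the graph into closedness in $X^n$. You instead invoke Proposition \ref{p:2.1} only for the individual maps $\vp_i$, obtaining closedness of each graph $\Gamma_{\vp_i}\subset X\times X$, and then write
\begin{equation*}
\orbn(\vp_{0,\infty})=\bigcap_{i=0}^{n-2}\pi_i^{-1}(\Gamma_{\vp_i}),
\end{equation*}
a finite intersection of preimages of closed sets under the continuous projections $\pi_i(x_0,\ldots,x_{n-1})=(x_i,x_{i+1})$. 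This removes the induction and the need to verify upper semicontinuity of any composite map, so your argument is structurally simpler; it also transfers verbatim to the coincidence-orbit analogue (Lemma \ref{l:1b}), where the constraint sets $\{(z,z')\in Z\times Z: q_i(z)=r_{i+1}(z')\}$ are closed merely because $q_i$, $r_{i+1}$ are continuous and $X$ is Hausdorff, with no appeal to multivalued machinery at all. What the paper's formulation buys in exchange is the observation that each orbit set is itself the graph of a u.s.c. map over the previous orbit set, a structural fact that can be reused when iterating constructions on orbit spaces; as a pure compactness proof, however, your version is the more economical one.
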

\begin{proof}
Since $\orbn(\vp_{0,\infty})\subset X^n$, it suffices to prove that $\orbn(\vp_{0,\infty})$ is closed. We proceed by induction. 

For $n=1$, $\orb{1}(\vp_{0,\infty})=X$, which is closed.

Let $n\in\N$, $n>1$, and assume that $\orb{j}(\vp_{0,\infty})$ is closed for $j<n$. Let us define $\tilde{\vp}_n: \orb{n-1}(\vp_{0,\infty}) \to \KK(X)$ as $\tilde{\vp}_n:= \vp_{n-2} \circ \pi_{n-1}$, where $\pi_{n-1}: X^{n-1} \to X$ is a (continuous) projection to the $(n-1)$-th coordinate. Obviously, $\tilde{\vp}_n$ is a u.s.c. multivalued map, because it is a composition of a u.s.c. map and a continuous map. Thus, $\orbn(\vp_{0,\infty}) = \Gamma_{\tilde{\vp}_n}$, which is closed by Proposition \ref{p:2.1}.
\end{proof}

\begin{lemma}\label{l:1b}
Let $X, Z$ be compact Hausdorff topological spaces and $(r,q)_{0,\infty}$ be a sequence of selected pairs $q_j,r_j:Z\to X$, $j\in\N\cup\{0\}$, where $r_j$, $j\in\N\cup\{0\}$, are single-valued continuous surjections and $q_j$, $j\in\N\cup\{0\}$, are single-valued continuous maps. Then, for $n\in\N$, the set $\coinn((r,q)_{0,\infty})$ of coincidence $n$-orbits of $(r,q)_{0,\infty}$ is compact.
\end{lemma}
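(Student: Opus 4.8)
The plan is to follow the same overall strategy as in Lemma \ref{l:1}, but to exploit the fact that here all the data are single-valued and continuous, which makes the argument considerably more direct. Since $\coinn((r,q)_{0,\infty})$ is a subset of the product $Z^n$, and $Z^n$ is compact (a finite product of compact spaces), it suffices to prove that $\coinn((r,q)_{0,\infty})$ is closed in $Z^n$. The case $n=1$ is trivial, since the set is then $Z$ itself, which is compact by assumption; so I would concentrate on $n\geq 2$.

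The coincidence set is cut out by the finitely many equations $q_i(z_i)=r_{i+1}(z_{i+1})$, $i=0,1,\ldots,n-2$, and the key observation is that each such equation defines a closed condition. Indeed, for each $i$ I would introduce the map $F_i\colon Z^n\to X\times X$ given by $F_i(z_0,\ldots,z_{n-1}):=\bigl(q_i(\pi_i(\mathbf{z})),\,r_{i+1}(\pi_{i+1}(\mathbf{z}))\bigr)$, where $\pi_j\colon Z^n\to Z$ denotes the continuous projection onto the $j$-th coordinate. As a composition of continuous projections with the continuous single-valued maps $q_i$ and $r_{i+1}$, the map $F_i$ is continuous. The coincidence condition $q_i(z_i)=r_{i+1}(z_{i+1})$ is then precisely the requirement $F_i(\mathbf{z})\in\Delta_X$, where $\Delta_X:=\{(x,x):x\in X\}$ is the diagonal in $X\times X$.

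Because $X$ is Hausdorff, the diagonal $\Delta_X$ is closed in $X\times X$; this is exactly the point at which the $T_2$-hypothesis enters. Consequently each set $F_i^{-1}(\Delta_X)$ is closed in $Z^n$, being the preimage of a closed set under a continuous map. Writing $\coinn((r,q)_{0,\infty})=\bigcap_{i=0}^{n-2}F_i^{-1}(\Delta_X)$ exhibits it as a finite intersection of closed sets, hence closed, and therefore compact as a closed subset of the compact space $Z^n$.

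I do not anticipate a serious obstacle in this argument: the whole point is that passing from multivalued maps to the single-valued pair $(r,q)$ replaces the graph-closedness machinery and the induction of Lemma \ref{l:1} by the elementary closedness of the diagonal. It is worth remarking that the surjectivity of the maps $r_j$, though assumed in the statement, plays no role in establishing compactness; it is the continuity of the $q_i$ and $r_{i+1}$ together with the Hausdorff property of $X$ that does all the work, while surjectivity is needed elsewhere, to ensure that the coincidence orbits are genuinely compatible with the orbit structure of $\vp_{0,\infty}$.
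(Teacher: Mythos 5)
Your proof is correct, but it takes a genuinely different route from the paper. The paper mimics its Lemma \ref{l:1} for orbits: it proceeds by induction on $n$, realizes $\coinn((r,q)_{0,\infty})$ as the graph of the multivalued map $\tilde{\vp}_n:= r^{-1}_{n-1}\circ q_{n-2}\circ\pi_{n-1}$ defined on $\coin{n-1}((r,q)_{0,\infty})$, observes that this composition is u.s.c.\ with compact values, and then invokes Proposition \ref{p:2.1} (the closed-graph characterization of u.s.c.\ compact maps) to conclude closedness. Your argument dispenses with all of this machinery: writing the coincidence set as $\bigcap_{i=0}^{n-2}F_i^{-1}(\Delta_X)$ with $F_i(\mathbf{z})=(q_i(z_i),r_{i+1}(z_{i+1}))$ continuous and $\Delta_X$ closed (by the Hausdorff hypothesis on $X$) is elementary point-set topology, needs no induction, and works verbatim. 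Each approach buys something: yours is shorter and isolates exactly which hypotheses matter --- in particular your remark that surjectivity of the $r_j$ is irrelevant here is correct, and is in fact a point on which your proof is cleaner than the paper's, since the paper's use of $r^{-1}_{n-1}$ as a multivalued map with nonempty compact values silently relies on $r_{n-1}$ being a surjection; the paper's approach, on the other hand, keeps the proof structurally parallel to Lemma \ref{l:1}, where the multivalued graph machinery is unavoidable because the orbit set there has no single-valued description, so the authors get a uniform template for both lemmas.
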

\begin{proof}
Since $\coinn((r,q)_{0,\infty})\subset Z^n$, it is sufficient to prove that $\coinn((r,q)_{0,\infty})$ is closed.

For $n=1$, $\coin{1}((r,q)_{0,\infty})=Z$, which is closed.

Let $n\in\N$, $n>1$, and assume that $\coin{j}((r,q)_{0,\infty})$ is closed for $j<n$. Let us define $\tilde{\vp}_n: \coin{n-1}((r,q)_{0,\infty}) \to \KK(X)$ as $\tilde{\vp}_n:= r^{-1}_{n-1} \circ q_{n-2} \circ \pi_{n-1}$, where $\pi_{n-1}: Z^{n-1} \to Z$ is a (continuous) projection to the $(n-1)$-th coordinate. Obviously, $\tilde{\vp}_n$ is a u.s.c. multivalued map with compact values, because it is a composition of a u.s.c. map with compact values and two continuous maps. Thus, $\coinn((r,q)_{0,\infty}) = \Gamma_{\tilde{\vp}_n}$, which is closed by Proposition \ref{p:2.1}.
\end{proof}

\section{Parametric topological entropy on orbits}\label{s:3}
At first, we will generalize the Bowen-Dinaburg type definitions of topological entropy in \cite{KT,RT}, jointly with the Bowen-Dinaburg-Hood type definitions in \cite{Sh}, to those of multivalued nonautonomous arbitrary (i.e. with no regularity restrictions) maps in compact Hausdorff spaces. Some properties of this generalized topological entropy, suitable for its estimation and calculation, will be deduced.

\begin{definition}\label{d:3.1}
Let $X$ be a nonempty set, $p$ a pseudometric on $X$ and $\ep>0$. A set $S\subset X$ is called \emph{$(p,\ep)$-separated} if $p(x,y) > \ep$, for every pair of distinct points $x,y\in S$. A set $R\subset X$ is called \emph{$(p,\ep)$-spanning in $Y\subset X$} if, for every $y\in Y$, there is $x\in R$ such that $p(x,y)\leq \ep$.  
\end{definition}

The following simple observation can be useful.
\begin{lemma}\label{l:sep}
Let $X$, $Y$ be nonempty sets, $p_1$ a pseudometric on $X$, $p_2$ a pseudometric on $Y$ and $\ep>0$. If $\Phi:X\to Y$ is a mapping satisfying
\begin{equation*}
p_1(x,x') = p_2(\Phi(x),\Phi(x')) \mbox{, for every $x,x'\in X$,}
\end{equation*}
then
\begin{enumerate}
	\item[(i)] for every $(p_1,\ep)$-separated set $S\subset X$, there exists a $(p_2,\ep)$-separated set $S'\subset Y$ with the same cardinality,
	\item[(ii)] if, moreover, $\Phi$ is surjective, then for every $(p_2,\ep)$-separated set $S'\subset Y$ there exists a $(p_1,\ep)$-separated set $S\subset X$ with the same cardinality.
\end{enumerate}
\end{lemma}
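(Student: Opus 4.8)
The hypothesis is that $\Phi$ is an isometry of pseudometrics, i.e. $p_1(x,x') = p_2(\Phi(x),\Phi(x'))$ for all $x,x'\in X$. The statement is essentially a transport-of-structure lemma: separation is defined purely in terms of pairwise pseudometric distances, and $\Phi$ preserves all such distances, so it should carry separated sets forward (and, when surjective, backward) while preserving cardinality. The plan is to verify both parts directly from Definition \ref{d:3.1}, with the only genuine subtlety being the bookkeeping of cardinalities, which hinges on whether $\Phi$ restricted to the relevant set is injective.

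For part (i), given a $(p_1,\ep)$-separated set $S\subset X$, I would set $S':=\Phi(S)\subset Y$. First I would check that $\Phi\restr{S}$ is injective: if $x,x'\in S$ are distinct, then $p_1(x,x')>\ep>0$, hence $p_2(\Phi(x),\Phi(x'))=p_1(x,x')>\ep>0$, so in particular $\Phi(x)\neq\Phi(x')$. This injectivity gives $\card(S')=\card(S)$. The same computation shows that any two distinct points of $S'$, being images $\Phi(x),\Phi(x')$ of distinct $x,x'\in S$, satisfy $p_2(\Phi(x),\Phi(x'))=p_1(x,x')>\ep$, so $S'$ is $(p_2,\ep)$-separated. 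This completes (i).

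For part (ii), assume in addition that $\Phi$ is surjective, and let $S'\subset Y$ be $(p_2,\ep)$-separated. Using surjectivity, for each $y\in S'$ choose a preimage $x_y\in\Phi^{-1}(y)$, and set $S:=\{x_y: y\in S'\}$. The map $y\mapsto x_y$ is injective (distinct $y$'s have distinct preimages under this choice, since $\Phi(x_y)=y$), so $\card(S)=\card(S')$, and $\Phi\restr{S}$ is a bijection onto $S'$. For distinct $x_y,x_{y'}\in S$ we have $y=\Phi(x_y)\neq\Phi(x_{y'})=y'$, whence $p_1(x_y,x_{y'})=p_2(y,y')>\ep$, so $S$ is $(p_1,\ep)$-separated.

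The argument is entirely elementary; there is no real obstacle, only the care needed to extract cardinality equalities from the isometry condition. I would note that the distance-preserving hypothesis is what forces $\Phi$ to separate points at scale $\ep$, which is exactly what both the separation property and the cardinality count require; surjectivity is invoked only in (ii) to guarantee that preimages exist so that the construction can go backward.
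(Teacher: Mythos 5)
Your proof is correct and follows essentially the same route as the paper: part (i) by pushing $S$ forward through the distance-preserving map $\Phi$, and part (ii) by choosing a preimage for each point of $S'$, which is exactly the paper's device of applying (i) to a single-valued selection of $\Phi^{-1}$. Your version merely makes explicit the injectivity bookkeeping that the paper leaves implicit.
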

\begin{proof}

For the proof of (i), consider a $(p_1,\ep)$-separated set $S\subset X$. Since for every $x,x'\in S$
\begin{equation*}
p_2(\Phi(x),\Phi(x')) = p_1(x,x')>\ep,
\end{equation*}
$\Phi(S)\subset Y$ is $(p_2,\ep)$-separated with the same cardinality as $S$.

The statement (ii) is an application of (i) to the mapping $\Phi':Y\to X$ which is an arbitrary single-valued selection of $\Phi^{-1}:Y\multimap X$.
\end{proof}

\begin{definition}\label{d:3.2}
Let $X$ be a nonempty set and $\vp_j:X\multimap X$, $j\in\N\cup\{0\}$, be a sequence $\vp_{0,\infty}$ of multivalued maps. Let $p$ be a pseudometric on $X$, $\ep>0$ and $n\in\N$. Let $p_n$ be a pseudometric on $X^n$ defined as
\begin{align}\label{eq:0}
p_n((x_0,\ldots,x_{n-1}),(y_0,\ldots,y_{n-1})) := \max \{ p(x_i,y_i): i=0,\ldots,n-1\}.
\end{align}
We call $S\subset \orbn(\vp_{0,\infty})$ a \emph{$(p,\ep,n)$-separated set for $\vp_{0,\infty}$} if it is a $(p_n,\ep)$-separated subset. We call $R\subset \orbn(\vp_{0,\infty})$ a \emph{$(p,\ep,n)$-spanning set for $\vp_{0,\infty}$} if it is a $(p_n,\ep)$-spanning subset in $\orbn(\vp_{0,\infty})$.
\end{definition}

Consider a uniform space $(X,\U)$ and a uniform product space $X^n$ whose uniformity $\U^n$ is induced by the set of canonical projections $\pi_i$, $i=0,\ldots,n-1$ (see \cite[Definition 2.26]{Pa}). In particular, these canonical projections are uniformly continuous as mappings between $(X^n,\U^n)$ and $(X,\U)$. Then the pseudometric $p_n$ on $X^n$ (see Definition~\ref{d:3.2}) is a member of $\U^n$. It is an immediate consequence of uniform continuity of $\pi_i$, $i=0,\ldots,n-1$ (see \cite[Definition 1.4]{Pa}) and property (U1) in Definition~\ref{d:uniform1}. In what follows, let us regard $\orbn(\vp_{0,\infty})$ as a subset of $(X^n,\U^n)$.

\begin{lemma}\label{l:3.2}
Let $X$ be a compact topological space, $Y\subset X$, $p$ a continuous pseudometric on $X$ and $\ep>0$. Then
\begin{enumerate}
	\item there exists a finite $(p,\ep)$-spanning set in $Y$,
	\item there exists a finite number $N\in\N$ such that every $(p,\ep)$-separated subset of $X$ has a cardinality not greater than $N$.
\end{enumerate}	
\end{lemma}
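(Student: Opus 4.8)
The plan is to exploit the compactness of $X$ together with the continuity of $p$. Continuity guarantees that each pseudometric ball $B_p(x,r)=\{y\in X:p(x,y)<r\}$ is open, being the preimage of $(-\infty,r)$ under the continuous map $y\mapsto p(x,y)$. Since $p(x,x)=0<r$, the family $\{B_p(x,r):x\in X\}$ is an open cover of $X$ for every $r>0$, so by compactness it admits a finite subcover. Both assertions will be read off from suitable choices of the radius $r$.

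For part (2), I would cover $X$ by the balls $B_p(x,\ep/2)$, $x\in X$, and extract a finite subcover $B_p(x_1,\ep/2),\dots,B_p(x_N,\ep/2)$; I claim this $N$ works. Indeed, if $S\subset X$ is $(p,\ep)$-separated and two distinct points $s,s'\in S$ were to lie in the same ball $B_p(x_i,\ep/2)$, then the triangle inequality for the pseudometric $p$ would give $p(s,s')\le p(s,x_i)+p(x_i,s')<\ep$, contradicting $p(s,s')>\ep$. Hence each of the $N$ balls contains at most one point of $S$, so $\card S\le N$, which is the required uniform bound.

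For part (1), I would instead cover $X$ by the balls $B_p(x,\ep)$ and take a finite subcover with centres $x_1,\dots,x_k$. For any $y\in Y\subset X$ there is an index $i$ with $y\in B_p(x_i,\ep)$, i.e. $p(x_i,y)<\ep$, in particular $p(x_i,y)\le\ep$; thus $R:=\{x_1,\dots,x_k\}$ is a finite $(p,\ep)$-spanning set in $Y$, as the definition permits $R\subset X$. Should one insist that the spanning set lie inside $Y$, I would instead invoke part (2): a $(p,\ep)$-separated subset of $Y$ of maximal cardinality exists and is finite, and maximality forces it to be $(p,\ep)$-spanning in $Y$, since any $y\in Y$ not already $\ep$-close to a chosen point could be adjoined while preserving separation.

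This is essentially a Lebesgue-number/compactness computation and presents no serious obstacle; the two points that require care are that $p$ is merely a \emph{pseudometric} --- so distinct points may be at $p$-distance $0$, which is harmless for the above estimates --- and the bookkeeping of strict versus non-strict inequalities. The latter is precisely why the radius $\ep/2$ (rather than $\ep$) is chosen in part (2), so that the triangle-inequality estimate stays strictly below $\ep$ and genuinely contradicts the separation condition.
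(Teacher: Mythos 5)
Your proposal is correct, but its logical organization is the reverse of the paper's, and the difference is worth noting. The paper proves part (1) first and entirely by hand: it covers the compact set $\overline{Y}$ by balls $B_p(x,\ep/2)$ centered at points of $\overline{Y}$, extracts a finite subcover, and then \emph{relocates} each center into $Y$ (replacing $x$ by an arbitrary point of $B_p(x,\ep/2)\cap Y$ when nonempty), so that the resulting finite spanning set is a subset of $Y$; part (2) is then deduced from part (1) by the injection $\Phi\colon S\to R$ into an $(p,\ep/2)$-spanning set. You instead prove part (2) directly by pigeonhole on a finite cover of $X$ by $\ep/2$-balls, and then obtain part (1) from part (2) via a maximal-cardinality $(p,\ep)$-separated subset of $Y$, which is automatically $(p,\ep)$-spanning in $Y$ (this is exactly the mechanism of the paper's Lemma~\ref{l:sep-span}(i), here made finite by your bound $N$ rather than by Zorn's lemma). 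One caution: your ``should one insist'' branch is not an optional refinement but the substantive content of part (1) as the paper uses it. The lemma is invoked (see the remark following Definition~\ref{d:hsepspan}) with $Y=\orbn(\vp_{0,\infty})$, which for arbitrary multivalued maps is generally \emph{not} closed, and Definition~\ref{d:3.2} requires spanning sets to be subsets of $\orbn(\vp_{0,\infty})$ — spanning points must themselves be orbits. So your primary argument for (1), which only produces $R\subset X$, would not suffice for the paper's purposes; it is your fallback via maximal separated sets that does the real work, and it does so correctly, arguably more cleanly than the paper's closure-and-relocation construction.
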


\begin{proof}
\begin{enumerate}
\item We firstly assume that $Y$ is a closed set. Then it is also compact and since $Y\subset \bigcup_{x\in Y} B_p(x,\frac{\ep}2)$, there exists a finite subcover $\{B_p(x_k,\frac{\ep}2): k=1,\ldots, N\}$, where $N\in\N$. Consequently, $\{x_k: k=1,\ldots,N\}$ is a finite $(p,\ep)$-spanning set in $Y$.

If $Y$ is an arbitrary set, by the argument above, there exists a $(p,\frac{\ep}2)$-spanning set $R$ in $\overline{Y}$ with a finite cardinality. It is not necessarily a subset of $Y$, but we can construct $R'$ in the following manner. We take one by one all the elements of $R$. Let $x\in R$. If $B_p(x,\frac{\ep}2)\cap A \neq \emptyset$, we include an arbitrary element of this set into $R'$. Clearly, $\card R' \leq \card R$, and $R'$ is a $(p,\ep)$-spanning in $Y$.

\item

Now, applying the step 1, let $R$ be a $(p,\frac{\ep}2)$-spanning set in $Y$ with the cardinality $N\in\N$ and let $S$ be an arbitrary $(p,\ep)$-separated set.

Define $\Phi: S \to R$ by choosing, for each $x\in S$, $\Phi(x)\in R$ with $p(x,\Phi(x))\leq \frac{\ep}2$. Then $\Phi$ is an injective mapping. Otherwise, we obtain a contradiction with the definition of a $(p,\ep)$-separated set. Hence, $\card S \leq \card R = N$.

\end{enumerate}

\end{proof}

Based on the arguments in Lemma~\ref{l:3.2} and Definition~\ref{d:3.2}, we can proceed in giving the correct definitions of topological entropy for arbitrary multivalued maps as follows.

\begin{definition}\label{d:hsepspan}
Let $X=(X,\U)$ be a compact uniform space and $\vp_j: X\multimap X$, $j\in\N\cup\{0\}$, be a sequence $\vp_{0,\infty}$ of multivalued  maps. Denoting by $s(\vp_{0,\infty},p,\ep,n)$ the largest cardinality of a $(p,\ep,n)$-separated set for $\vp_{0,\infty}$, we take
\begin{equation*}
\hsep(\vp_{0,\infty},p,\ep) := \limsup_{n\to\infty} \frac1{n}\log s(\vp_{0,\infty},p,\ep,n).
\end{equation*}
The \emph{topological entropy} $\hsep(\vp_{0,\infty})$ of $\vp_{0,\infty}$ is defined to be
\begin{equation*}
\hsep(\vp_{0,\infty}):= \sup_{p\in\U,\ep>0} \hsep(\vp_{0,\infty},p,\ep).
\end{equation*}

Denoting by $r(\vp_{0,\infty},p,\ep,n)$ the smallest cardinality of a $(p,\ep,n)$-spanning set for $\vp_{0,\infty}$, we take
\begin{equation*}
\hspan(\vp_{0,\infty},p,\ep) := \limsup_{n\to\infty} \frac1{n}\log r(\vp_{0,\infty},p,\ep,n).
\end{equation*}
The \emph{topological entropy} $\hspan(\vp_{0,\infty})$ of $\vp_{0,\infty}$ is defined to be
\begin{equation*}
\hspan(\vp_{0,\infty}):= \sup_{p\in\U,\ep>0} \hspan(\vp_{0,\infty},p,\ep).
\end{equation*}
\end{definition}

\begin{remark}
The existence of the largest cardinality of a $(p,\ep,n)$-separated set for $\vp_{0,\infty}$ is, in view of Definition~\ref{d:3.2}, equivalent to the existence of the largest cardinality of a $(p_n,\ep)$-separated subset of $\orbn(\vp_{0,\infty})$. It is guaranteed by Lemma~\ref{l:3.2}, because $(X^n,\U^n)$ is a compact uniform space and $p_n\in \U^n$. Moreover, both cardinalities $s(\vp_{0,\infty},p,\ep,n)$, $r(\vp_{0,\infty},p,\ep,n)$ are finite by the same arguments.
\end{remark}

\begin{remark}
The correctness of Definition~\ref{d:hsepspan} can be also proved in an alternative way, because each arbitrary map $\vp_j$, $j\in\N\cup\{0\}$, from the sequence $\vp_{0,\infty}$ is a multivalued selection $\vp_j\subset \tilde{\vp}_j$ of a u.s.c. mapping $\tilde{\vp}_j$, $j\in\N\cup\{0\}$, whose graph $\Gamma_{\tilde{\vp}}$ is the closure of the graph $\Gamma_\vp$ of $\vp_j$, i.e. $\Gamma_{\tilde{\vp}_j} = \cl_{X\times X} \Gamma_{\vp_j}$, $j\in\N\cup\{0\}$. For the sequence $\tilde{\vp}_{0,\infty} = \{\tilde{\vp}_j\}_{j=0}^{\infty}$ of u.s.c. maps $\tilde{\vp}_j$, $j\in\N\cup\{0\}$, the existence of maxima (for separated sets), resp. minima (for spanning sets) can be proved quite analogously as in the particular autonomous case in a compact metric space (see \cite{KT}, and cf. Lemma~\ref{l:1}).
\end{remark}

\begin{lemma}\label{l:sep-span}
Let $X$ be a nonempty set, $p$ a pseudometric on $X$, $Y\subset X$, $\ep>0$ and $S\subset Y$ be a maximal (with respect to inclusion) $(p,\ep)$-separated set.
\begin{enumerate}
	\item[(i)] Then $S$ is a $(p,\ep)$-spanning set in $Y$.
	\item[(ii)] If, moreover, $R$ is a $(p,\frac{\ep}2)$-spanning set in $Y$, then $\card R \geq \card S$.
\end{enumerate}
\end{lemma}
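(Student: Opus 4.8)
The plan is to prove the two standard comparison facts relating maximal separated sets and spanning sets, both of which follow directly from Definition~\ref{d:3.1} by elementary set-theoretic arguments.

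For part (i), I would argue by contradiction using the maximality of $S$. Suppose $S$ is \emph{not} $(p,\ep)$-spanning in $Y$. By the definition of a spanning set, this means there exists a point $y\in Y$ such that $p(x,y)>\ep$ for \emph{every} $x\in S$. I claim that $y\notin S$: indeed if $y$ were in $S$, taking $x=y$ would give $p(y,y)=0\le\ep$, a contradiction. Hence $S\cup\{y\}$ is a subset of $Y$ strictly larger than $S$, and since $p(x,y)>\ep$ for all $x\in S$ while $S$ itself is already $(p,\ep)$-separated, the enlarged set $S\cup\{y\}$ is again $(p,\ep)$-separated. This contradicts the maximality of $S$ with respect to inclusion. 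Therefore $S$ must be $(p,\ep)$-spanning in $Y$, proving (i).

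For part (ii), I would construct an injection from $S$ into $R$, mimicking the counting argument already used in the proof of Lemma~\ref{l:3.2}(2). Since $R$ is $(p,\tfrac{\ep}{2})$-spanning in $Y$ and $S\subset Y$, for each $x\in S$ I may choose a point $\Phi(x)\in R$ with $p(x,\Phi(x))\le\tfrac{\ep}{2}$. I claim $\Phi$ is injective. If $\Phi(x)=\Phi(x')=:z$ for distinct $x,x'\in S$, then by the triangle inequality
\begin{equation*}
p(x,x')\le p(x,z)+p(z,x')\le \tfrac{\ep}{2}+\tfrac{\ep}{2}=\ep,
\end{equation*}
contradicting the fact that $S$ is $(p,\ep)$-separated, which requires $p(x,x')>\ep$ for distinct points. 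Hence $\Phi$ is injective, so $\card S\le\card R$, which is the desired inequality.

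Neither part presents a genuine obstacle; the arguments are entirely elementary and parallel the reasoning already deployed in Lemma~\ref{l:3.2}. The only point requiring mild care is the use of the triangle inequality in (ii): although $p$ is only a \emph{pseudometric} rather than a metric, the triangle inequality still holds for pseudometrics, so the estimate $p(x,x')\le p(x,z)+p(z,x')$ is valid, and the possibility $p(x,x')=0$ for distinct points causes no trouble since the separation hypothesis demands the \emph{strict} inequality $p(x,x')>\ep$. I would therefore present both parts concisely, flagging only that the triangle inequality is being applied to a pseudometric.
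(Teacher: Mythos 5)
Your proof is correct and follows essentially the same route as the paper's: part (i) via maximality of $S$ (you make the contradiction explicit where the paper merely asserts the spanning property), and part (ii) via the same injection $\Phi:S\to R$ with the triangle inequality for the pseudometric. A minor point in your favour: you argue from maximality \emph{with respect to inclusion}, which is what the lemma actually states (and what Remark~\ref{r:zorn} guarantees exists), whereas the paper's proof text loosely speaks of ``maximal cardinality.''
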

\begin{proof}
\begin{enumerate}
	\item[(i)] Let $S\subset Y$ be a $(p,\ep)$-separated set with the maximal cardinality. Then, for every $y\in Y$, there exists $x\in S$ such that $p(x,y)\leq\ep$. Hence, $S$ is a $(p,\ep)$-spanning set in $Y$.
	\item[(ii)] Let $S\subset Y$ be a $(p,\ep)$-separated set with the maximal cardinality and $R$ be any $(p,\frac{\ep}2)$-spanning set in $Y$. Then we can define the mapping $\Phi: S \to R$ such that, for every $x\in S$, we take $\Phi(x)\in R$ with $p(x,\Phi(x))\leq\frac{\ep}2$. Since this mapping is injective, $\card R \geq \card S$.
\end{enumerate}
\end{proof}

\begin{remark}\label{r:zorn}
If $X$ is a nonempty set, $p$ a pseudometric on $X$, $Y\subset X$, and $\ep>0$, then there always exists at least one maximal $(p,\ep)$-separated subset of $Y$. Indeed, it suffices to consider the set of $(p,\ep)$-separated subsets of $Y$ partially ordered by the inclusion and apply Zorn's lemma (every chain of $(p,\ep)$-separated subset of $Y$ has an upper bound being a union of all the elements of the chain).
\end{remark}

\begin{theorem}\label{t:KT-sep-span}
Let $X=(X,\U)$ be a compact uniform space and $\vp_j: X\multimap X$, $j\in\N\cup\{0\}$, be a sequence $\vp_{0,\infty}$ of multivalued  maps. Then $\hsep(\vp_{0,\infty})=\hspan(\vp_{0,\infty})$.
\end{theorem}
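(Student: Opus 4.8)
The plan is to squeeze the two counting functions against each other for each fixed $n$ and then pass to the limit. The whole argument rests on Lemma~\ref{l:sep-span}, applied not to $p$ on $X$ but to the product pseudometric $p_n$ on the subset $\orbn(\vp_{0,\infty})\subset X^n$; since $p_n$ is a genuine pseudometric and $\orbn(\vp_{0,\infty})$ is merely a subset, the lemma applies verbatim with $Y:=\orbn(\vp_{0,\infty})$. The extremal cardinalities $s(\vp_{0,\infty},p,\ep,n)$ and $r(\vp_{0,\infty},p,\ep,n)$ are attained and finite by the remark following Definition~\ref{d:hsepspan} (via Lemma~\ref{l:3.2}), and a maximal $(p_n,\ep)$-separated subset exists by Remark~\ref{r:zorn}.

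First I would establish, for every $p\in\U$, $\ep>0$ and $n\in\N$, the chain
\begin{equation*}
r(\vp_{0,\infty},p,\ep,n) \le s(\vp_{0,\infty},p,\ep,n) \le r(\vp_{0,\infty},p,\tfrac{\ep}{2},n).
\end{equation*}
For the left inequality, take a $(p,\ep,n)$-separated set $S$ of maximal cardinality $s(\vp_{0,\infty},p,\ep,n)$; being of maximal cardinality, it is maximal with respect to inclusion (otherwise one could adjoin a point and contradict maximality), so by Lemma~\ref{l:sep-span}(i) it is also $(p,\ep,n)$-spanning, whence the smallest spanning set has cardinality at most $\card S$. For the right inequality, take the same maximal separated set $S$ and an arbitrary $(p,\frac{\ep}{2},n)$-spanning set $R$; Lemma~\ref{l:sep-span}(ii) gives $\card R \ge \card S$, and minimizing over $R$ yields the claim.

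It remains to pass to entropy. Applying the monotone map $\frac1n\log(\cdot)$ and then $\limsup_{n\to\infty}$ to the chain gives, for all $p$ and $\ep$,
\begin{equation*}
\hspan(\vp_{0,\infty},p,\ep) \le \hsep(\vp_{0,\infty},p,\ep) \le \hspan(\vp_{0,\infty},p,\tfrac{\ep}{2}).
\end{equation*}
Taking $\sup_{p\in\U,\,\ep>0}$ of the left-hand inequality yields $\hspan(\vp_{0,\infty}) \le \hsep(\vp_{0,\infty})$. For the reverse, taking the same supremum of the right-hand inequality and using that $\ep\mapsto\frac{\ep}{2}$ is a bijection of $(0,\infty)$ onto itself (so the supremum taken over $\frac{\ep}{2}$ equals that taken over $\ep$) gives $\hsep(\vp_{0,\infty}) \le \hspan(\vp_{0,\infty})$. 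The two inequalities together prove the equality.

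The argument is essentially the classical Bowen--Dinaburg comparison, and there is no serious obstacle; the only points requiring care are the legitimacy of invoking Lemma~\ref{l:sep-span} for the pair $(p_n,\orbn(\vp_{0,\infty}))$ rather than for $(p,X)$, and the harmless absorption of the factor $\frac{1}{2}$ by the final supremum over $\ep$, which works precisely because the auxiliary quantities are compared \emph{after}, not before, taking the $\ep$-supremum.
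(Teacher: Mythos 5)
Your proof is correct and follows essentially the same route as the paper's: the paper's own argument consists precisely of invoking Lemma~\ref{l:sep-span}(i) to get $\hspan(\vp_{0,\infty})\leq\hsep(\vp_{0,\infty})$ and Lemma~\ref{l:sep-span}(ii) for the reverse inequality. Your write-up merely makes explicit the details the paper leaves implicit (maximal cardinality implies inclusion-maximality, and the absorption of the factor $\tfrac{\ep}{2}$ after taking the supremum over $\ep$), both of which are handled correctly.
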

\begin{proof}
Taking into account Definition~\ref{d:hsepspan}, condition (i) in Lemma~\ref{l:sep-span} gives $\hspan(\vp_{0,\infty}) \leq \hsep(\vp_{0,\infty})$. The inequality $\hsep(\vp_{0,\infty}) \leq \hspan(\vp_{0,\infty})$ follows from condition (ii) of the same lemma.
\end{proof}

\begin{definition}\label{d:hKT}
Let $X=(X,\U)$ be a compact uniform space and $\vp_j: X\multimap X$, $j\in\N\cup\{0\}$, be a sequence $\vp_{0,\infty}$ of multivalued  maps. Then (along the lines of the papers \cite{KT,RT}, whose authors have the initials KT and RT)
\begin{equation*}
\hKT(\vp_{0,\infty}):= \hsep(\vp_{0,\infty})=\hspan(\vp_{0,\infty}).
\end{equation*}
\end{definition}

\begin{remark}
Definition~\ref{d:hKT} generalizes its analog in \cite{KT,RT} for multivalued autonomous u.s.c. maps in compact metric spaces (for continuous dynamical systems, cf. \cite{RS}) as well as those in \cite{Sh} for single-valued nonautonomous continuous maps in compact uniform spaces. Moreover, for single-valued autonomous continuous maps in compact metric spaces, resp. compact uniform spaces, it coincides with the standard definitions in \cite{Bo,Di,Ho}. The same is true with respect to the Bowen-Dinaburg type definitions for single-valued nonautonomous continuous maps in compact metric spaces (i.e. for nonautonomous dynamical systems) in \cite{KS}.
\end{remark}

\begin{definition}
Let $X=(X,\U)$ be a compact uniform space, $\vp_j: X\multimap X$, $j\in\N\cup\{0\}$, be a sequence $\vp_{0,\infty}$ of multivalued  maps and 
$\psi_j: Y\multimap Y$, $j\in\N\cup\{0\}$, be a sequence $\psi_{0,\infty}$ of multivalued  maps. We say that $\psi_{0,\infty}$ is \emph{equi-semi-conjugate} to $\vp_{0,\infty}$ if there exists a sequence of equi-continuous surjections $f_j:X\to Y$, $j\in\N\cup\{0\}$ such that for every $x\in X$ and $j\in\N\cup\{0\}$:
\begin{equation*}
\psi_j \circ f_j(x) \subset f_{j+1} \circ \vp_j(x).
\end{equation*} 
\end{definition}

\begin{theorem}\label{t:semi}
Let $X=(X,\U)$ be a compact uniform space, $\vp_j: X\multimap X$, $j\in\N\cup\{0\}$, be a sequence $\vp_{0,\infty}$ of multivalued  maps and 
$\psi_j: Y\multimap Y$, $j\in\N\cup\{0\}$, be a sequence $\psi_{0,\infty}$ of multivalued  maps. If $\psi_{0,\infty}$ is equi-semi-conjugate to $\vp_{0,\infty}$, then
\begin{equation*}
\hKT(\psi_{0,\infty}) \leq \hKT(\vp_{0,\infty}).
\end{equation*}
\end{theorem}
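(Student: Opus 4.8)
The plan is to exploit the identity $\hKT=\hsep$ from Definition~\ref{d:hKT} and to compare the two systems through their separated sets. The naive idea of pushing a $\vp_{0,\infty}$-orbit forward by the maps $f_j$ fails: if $\bar x=(x_0,\dots,x_{n-1})$ satisfies $x_{i+1}\in\vp_i(x_i)$, then $(f_0(x_0),\dots,f_{n-1}(x_{n-1}))$ need not be a $\psi_{0,\infty}$-orbit, since the semi-conjugacy inclusion $\psi_i\circ f_i\subset f_{i+1}\circ\vp_i$ only places the $\psi$-image inside the $f\vp$-image, not conversely. I would therefore instead \emph{lift} $\psi_{0,\infty}$-orbits in $Y$ to $\vp_{0,\infty}$-orbits in $X$, which is precisely the direction the inclusion supports.

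Concretely, I would first establish the lifting step: for every $\bar y=(y_0,\dots,y_{n-1})\in\orbn(\psi_{0,\infty})$ there is $\bar x=(x_0,\dots,x_{n-1})\in\orbn(\vp_{0,\infty})$ with $f_i(x_i)=y_i$ for all $i$. This is an induction on the coordinate $i$. Using surjectivity of $f_0$, choose $x_0$ with $f_0(x_0)=y_0$; given $x_i$ with $f_i(x_i)=y_i$, the relation $y_{i+1}\in\psi_i(y_i)=(\psi_i\circ f_i)(x_i)\subset (f_{i+1}\circ\vp_i)(x_i)=f_{i+1}(\vp_i(x_i))$ yields some $x_{i+1}\in\vp_i(x_i)$ with $f_{i+1}(x_{i+1})=y_{i+1}$. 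Selecting one such lift for each orbit produces an injective map $\bar y\mapsto\bar x$, because two orbits with distinct images under $(f_0,\dots,f_{n-1})$ are themselves distinct, so the lifted family has the same cardinality as the original.

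Next I would transfer separation from $Y$ to $X$ by means of equicontinuity. Fix a pseudometric $q$ from the uniformity of $Y$ and $\ep>0$. Since $\{f_j\}$ is equicontinuous on the compact space $X$ (hence uniformly so), there are a pseudometric $p\in\U(X)$ and $\delta>0$ such that $p(x,x')<\delta$ implies $q(f_j(x),f_j(x'))<\ep$ for all $j$; equivalently, $q(f_j(x),f_j(x'))\ge\ep$ forces $p(x,x')\ge\delta$. If $S\subset\orbn(\psi_{0,\infty})$ is a $(q,\ep,n)$-separated set for $\psi_{0,\infty}$ and $\bar y,\bar y'\in S$ are distinct, then $q(y_i,y_i')>\ep$ for some $i$, so for their chosen lifts $\bar x,\bar x'$ we get $p(x_i,x_i')\ge\delta$, whence $p_n(\bar x,\bar x')\ge\delta$. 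Thus the lifted set is $(p,\tfrac{\delta}{2},n)$-separated for $\vp_{0,\infty}$ and of the same cardinality as $S$, giving $s(\vp_{0,\infty},p,\tfrac{\delta}{2},n)\ge s(\psi_{0,\infty},q,\ep,n)$ for every $n$.

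Dividing by $n$, taking $\limsup$, and then the supremum over all admissible $(q,\ep)$ on the right while bounding the left by $\hsep(\vp_{0,\infty})$ yields $\hsep(\vp_{0,\infty})\ge\hsep(\psi_{0,\infty})$, that is, $\hKT(\vp_{0,\infty})\ge\hKT(\psi_{0,\infty})$. The main obstacle I anticipate is not any single estimate but getting the architecture right: recognising that the comparison must run by lifting orbits \emph{against} the direction of the $f_j$ (so that surjectivity and the inclusion are both genuinely used), and then pairing this with the correct uniform-equicontinuity statement, so that separation in $Y$ becomes separation in $X$ with a gap $\delta$ independent of both $n$ and $j$. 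The only delicate bookkeeping is the passage from $p_n(\bar x,\bar x')\ge\delta$ to a genuine $(p,\tfrac{\delta}{2},n)$-separation, where one must be careful with strict versus non-strict inequalities.
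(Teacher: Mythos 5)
Your proof is correct, and it shares the paper's central lemma --- the inductive lifting of $\psi$-orbits to $\vp$-orbits via surjectivity of the $f_j$ and the inclusion $\psi_j\circ f_j\subset f_{j+1}\circ\vp_j$ --- but it finishes with the dual counting argument. The paper works with spanning sets: it takes a minimal $(p,\delta,n)$-spanning set $R$ for $\vp_{0,\infty}$, pushes it forward by $\Phi_n=(f_0,\ldots,f_{n-1})$, uses the lifting claim to see that $\Phi_n(R)$ comes within $\ep/2$ of every point of $\orbn(\psi_{0,\infty})$, and then must repair $T=\Phi_n(R)$ to a set $T'\subset\orbn(\psi_{0,\infty})$ (at the cost of enlarging the radius), precisely because images of $\vp$-orbits need not be $\psi$-orbits --- the very failure you point out at the start. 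You instead pull a maximal $(q,\ep,n)$-separated set of $\psi_{0,\infty}$ back along the lifts; the contrapositive of uniform equicontinuity converts separation by $\ep$ in $Y$ into separation by $\delta$ in $X$, and the lifted set automatically lies in $\orbn(\vp_{0,\infty})$, so no repair step is needed. Both routes are legitimate thanks to Definition~\ref{d:hKT} ($\hKT=\hsep=\hspan$); yours is slightly cleaner in avoiding the modification of $T$, and also more careful notationally, since you distinguish a pseudometric $q$ on $Y$ from $p$ on $X$, whereas the paper overloads $p$ for both spaces. Your halving of $\delta$ to respect the strict-inequality convention of Definition~\ref{d:3.1} is sound (indeed $p(x_i,x_i')\ge\delta>\delta/2$), as is the injectivity of the chosen lift map, so the cardinality comparison $s(\vp_{0,\infty},p,\tfrac{\delta}{2},n)\ge s(\psi_{0,\infty},q,\ep,n)$ and the passage to $\hsep$ go through as you state.
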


\begin{proof}
Let $\ep>0$ and $p\in\U$. Since $X$ is a compact uniform space, $f_j:X\to Y$, $j\in\N\cup\{0\}$, is a sequence of uniformly equi-continuous maps and $p$ is uniformly continuous. Then there exists $\delta>0$ such that, for every $j\in\N\cup\{0\}$ and every $a,b\in X$, $p(a,b) < \delta$ implies $p(f_j(a),f_j(b))<\frac{\ep}2$.

Given $n\in\N$, we define $\Phi_n: \orbn(\vp_{0,\infty})\to Y^n$ as
\begin{equation*}
\Phi_n(x_0,\ldots,x_{n-1}) = (f_0(x_0),\ldots,f_{n-1}(x_{n-1})).
\end{equation*}

We claim that $\orbn(\psi_{0,\infty}) \subset \Phi_n(\orbn(\vp_{0,\infty}))$. Let $(y_0,\ldots,y_{n-1})\in\orbn(\psi_{0,\infty})$. We proceed by an inductive construction of $(x_0,\ldots,x_{n-1})\in\orbn(\vp_{0,\infty})$ such that $\Phi_n(x_0,\ldots,x_{n-1}) = (y_0,\ldots,y_{n-1})$. 

By surjectivity, there exists $x_0\in f_0^{-1}(y_0)$. Let $m\in\N$, $1<m<n$, be such that $(x_0,\ldots, x_{m-2})\in \orb{m-1}(\vp_{0,\infty})$ and $x_j \in f_j^{-1}(y_j)$, for $j=0,\ldots,m-2$. Then 
\begin{equation*}
y_{m-1} \in \psi_{m-2}(y_{m-2}) = \psi_{m-2}\circ f_{m-2} (x_{m-2}) \subset f_{m-1}\circ \vp_{m-2}(x_{m-2}).
\end{equation*}
There exists $x_{m-1}\in \vp_{m-2}(x_{m-2})$ such that $f_{m-1}(x_{m-1})=y_{m-1}$. Thus, $(x_0,\ldots,x_{m-1})\in \orb{m}(\vp_{0,\infty})$ and $\Phi_m(x_0,\ldots,x_{m-1}) = (y_0,\ldots,y_{m-1})$ which completes the inductive argument.

%If $(x_0,\ldots, x_{n-2})\in \orbn(\vp_{0,\infty})$ and $x_j\in f_j^{-1}(y_j)$, for $j=0,\ldots,n-2$, then 
%\begin{equation*}
%y_{j+1} \in \psi_j(y_j) = \psi_j\circ f_j (x_j) \subset f_{j+1}\circ \vp_j(x_j).
%\end{equation*}
%There exists $x_{j+1}\in \vp_j(x_j)$ such that $f_{j+1}(x_{j+1})=y_{j+1}$. Thus, $(x_0,\ldots,x_{n-1})\in \orbn(\vp_{0,\infty})$ and $\Phi_n(x_0,\ldots,x_{n-1}) = (y_0,\ldots,y_{n-1})$.

For a given $n\in\N$ and $p\in\U$, let $R$ be an $(p,\delta,n)$-spanning set for $\vp_{0,\infty}$ with the minimal cardinality. We set $T = \Phi_n(R)$. Then, for every $\bar{y}\in\orbn(\psi_{0,\infty})$, there exists $\bar{x}\in\orbn(\vp_{0,\infty})$ such that $\Phi_n(\bar{x})=\bar{y}$. Since $R$ is $(p,\delta,n)$-spanning for $\vp_{0,\infty}$, there exists $\bar{s}:=(s_0,\ldots,s_{n-1})\in R$ such that $p_n(\bar{s},\bar{x})<\delta$. Thus, by the choice of $\delta$, also $p_n((f_0(s_0),\ldots,f_{n-1}(s_{n-1})),\bar{y})<\frac{\ep}2$. 

If $T$ is a subset of $\orbn(\psi_{0,\infty})$, it is a $(p,\frac{\ep}2,n)$-spanning set for $\psi_{0,\infty}$. Otherwise, we modify $T$ similarly as in the proof of Lemma~\ref{l:3.2} to get $T'$, $\card{T'}\leq\card{T}$, which is a $(p,\ep,n)$-spanning set for $\psi_{0,\infty}$.

Therefore, for all $n\in\N$,
\begin{equation*}
r(\vp_{0,\infty},p,\delta,n) = \card(R) \geq \card(T') \geq r(\psi_{0,\infty},p,\ep,n).
\end{equation*}
Hence,
\begin{equation*}
\hspan(\vp_{0,\infty}) \geq \hspan(\vp_{0,\infty},p,\delta) \geq \hspan(\psi_{0,\infty},p,\ep),
\end{equation*}
and finally $\hspan(\vp_{0,\infty}) \geq \hspan(\psi_{0,\infty})$.

\end{proof}

\begin{remark}\label{r:3.4}
In an important particular case of Theorem~\ref{t:semi}, when $\psi_j\subset\vp_j$, $j\in\N\cup\{0\}$, the entropy inequality $\hKT(\psi_{0,\infty})\leq \hKT(\vp_{0,\infty})$ follows directly from Theorem~\ref{t:semi}, when $f_j=\id\restr{X}$, $X=Y$, $j\in\N\cup\{0\}$. Observe that this entropy inequality is, under an even more general assumption $\orbn(\vp_{0,\infty}) \subset \orbn(\psi_{0,\infty})$, $n\in\N$, an immediate consequence of Definition~\ref{d:hKT}.
\end{remark}

\begin{definition}
Let $(X,\U)$ be a compact uniform space, $\vp_j: X\multimap X$, $j\in\N\cup\{0\}$, be a sequence $\vp_{0,\infty}$ of multivalued  maps, and let $k\in\N$. We define the \emph{$k$-th iterate} $\vp^{[k]}_{0,\infty}$ of $\vp_{0,\infty}$ as $\{\vp^{[k]}_{jk}\}_{j=0}^{\infty}$, where $\vp^{[k]}_{jk} = \vp_{jk+k-1}\circ \ldots \circ \vp_{jk}$, for each $j\geq 0$, $k\geq 1$.
\end{definition}

The following lemma is a formal generalization of \cite[Lemma 5.3]{KT}, although it deals with non-autonomous dynamical systems and, moreover, it drops a regularity assumption imposed on upper semi-continuous mappings. Nevertheless, the proof is exactly the same, and so we omit it here. The only difference is conceptual; we use a generalized definition of the system of $n$-orbits, namely $\orbn(\vp_{0,\infty})$.

\begin{lemma}\label{l:KT}
Let $(X,\U)$ be a compact uniform space, $p\in\U$, $\vp_j: X\multimap X$, $j\in\N\cup\{0\}$, be a sequence $\vp_{0,\infty}$ of multivalued  maps, and let $k\in\N$. Then, for all $n\in\N$ and $\ep>0$, if $m\in\N$ is chosen such that $(m-1)k < n \leq mk$, then
\begin{equation*}
s(\vp_{0,\infty},p,\ep,n) \leq \left( s(\vp^{[k]}_{0,\infty},p,\frac{\ep}2,m)\right)^k.
\end{equation*}
\end{lemma}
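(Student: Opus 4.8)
\emph{Overall strategy.} The plan is to reduce the inequality to a combinatorial counting estimate obtained by splitting the time set $\{0,1,\dots,n-1\}$ into its $k$ residue classes modulo $k$ and encoding each $n$-orbit by its $k$ subsampled sub-orbits. Concretely, I would fix a $(p,\ep,n)$-separated set $S\subset\orbn(\vp_{0,\infty})$ of maximal cardinality $s(\vp_{0,\infty},p,\ep,n)$ and, for each residue $l\in\{0,1,\dots,k-1\}$, introduce the projection $\rho_l$ sending $(x_0,\dots,x_{n-1})$ to the subsequence $(x_l,x_{l+k},x_{l+2k},\dots)$ indexed by those positions that are $<n$. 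The combined map $(\rho_0,\dots,\rho_{k-1})$ is merely a reindexing of the coordinates, hence injective, and one has $p_n(\bar x,\bar y)=\max_{0\le l<k}p_{m_l}(\rho_l\bar x,\rho_l\bar y)$, where $m_l$ denotes the length of the $l$-th subsample. Here the hypothesis $(m-1)k<n\le mk$ is exactly what guarantees $m_0=m$ and $m_l\le m$ for every $l$.

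\emph{Key step.} First I would check that the phase-$0$ subsample lands in the iterate system: if $(x_0,\dots,x_{n-1})\in\orbn(\vp_{0,\infty})$, then composing the memberships $x_{jk+1}\in\vp_{jk}(x_{jk}),\dots,x_{(j+1)k}\in\vp_{jk+k-1}(x_{(j+1)k-1})$ yields $x_{(j+1)k}\in\vp^{[k]}_{jk}(x_{jk})$, so that $\rho_0\bar x\in\orb{m}(\vp^{[k]}_{0,\infty})$. For each phase $l$, let $E_l$ be a maximal (by inclusion) $(p,\frac{\ep}2,m_l)$-separated set for the corresponding iterate system; by Lemma~\ref{l:sep-span}(i) it is $(p_{m_l},\frac{\ep}2)$-spanning in that system's orbit space, so I may choose $e_l(\bar x)\in E_l$ with $p_{m_l}(\rho_l\bar x,e_l(\bar x))\le\frac{\ep}2$ and encode $\bar x$ by the tuple $(e_0(\bar x),\dots,e_{k-1}(\bar x))$. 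If two distinct $\bar x,\bar y\in S$ received the same tuple, then for each $l$ the triangle inequality would give $p_{m_l}(\rho_l\bar x,\rho_l\bar y)\le\frac{\ep}2+\frac{\ep}2=\ep$, whence $p_n(\bar x,\bar y)\le\ep$, contradicting that $S$ is $(p,\ep,n)$-separated. Thus the encoding is injective and $\card S\le\prod_{l=0}^{k-1}\card E_l$.

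\emph{Main obstacle.} The delicate point, and the only place where the nonautonomous setting differs from the autonomous argument of \cite[Lemma~5.3]{KT}, is to bound every factor $\card E_l$ by $s(\vp^{[k]}_{0,\infty},p,\frac{\ep}2,m)$. Two reductions are needed. The harmless one is monotonicity in the orbit length, $s(\,\cdot\,,p,\frac{\ep}2,m_l)\le s(\,\cdot\,,p,\frac{\ep}2,m)$ for $m_l\le m$, obtained by extending every short orbit to length $m$ using nonemptiness of the values, which only enlarges the pseudometric $p_{(\cdot)}$ and hence preserves separation. The genuine difficulty is that, for $l\ge1$, the subsample satisfies $x_{l+(j+1)k}\in(\vp_{l+jk+k-1}\circ\dots\circ\vp_{l+jk})(x_{l+jk})$, i.e.\ it is an orbit of the $k$-th iterate of the \emph{shifted} sequence $\vp_{l,\infty}$ rather than of $\vp_{0,\infty}$, so a priori $\card E_l=s\big((\vp_{l,\infty})^{[k]},p,\frac{\ep}2,m\big)$. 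In the autonomous case all shifted iterates collapse to $\vp^{[k]}_{0,\infty}$, which is precisely why ``the proof is exactly the same''; closing the argument in the stated form therefore hinges on dominating each shifted-iterate count by the phase-$0$ count, and I expect this identification of the shifted contributions with $\vp^{[k]}_{0,\infty}$ to be the step demanding the most care. Once it is in place, multiplying the $k$ factors gives $\card S\le\big(s(\vp^{[k]}_{0,\infty},p,\frac{\ep}2,m)\big)^{k}$, as claimed.
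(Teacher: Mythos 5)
You have reconstructed precisely the argument that the paper itself invokes: the paper omits the proof of Lemma~\ref{l:KT} and defers to \cite[Lemma~5.3]{KT}, whose proof is exactly your phase decomposition, with the encoding by maximal $(p,\frac{\ep}{2})$-separated (hence, by Lemma~\ref{l:sep-span}, spanning) sets and the triangle-inequality injectivity argument. Your preparatory steps are all correct: the phase-$0$ subsample lies in $\orb{m}(\vp^{[k]}_{0,\infty})$, the identity $p_n(\bar x,\bar y)=\max_{l}p_{m_l}(\rho_l\bar x,\rho_l\bar y)$ holds, and monotonicity in the orbit length follows by extending orbits using nonemptiness of values. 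However, what this machinery yields is only
\begin{equation*}
s(\vp_{0,\infty},p,\ep,n)\ \leq\ \prod_{l=0}^{k-1} s\bigl((\vp_{l,\infty})^{[k]},p,\tfrac{\ep}{2},m\bigr),
\end{equation*}
where $\vp_{l,\infty}:=\{\vp_{l+j}\}_{j=0}^{\infty}$ denotes the shifted sequence; to reach the stated right-hand side you would still need $s((\vp_{l,\infty})^{[k]},p,\frac{\ep}{2},m)\leq s(\vp^{[k]}_{0,\infty},p,\frac{\ep}{2},m)$ for every $l$, and you explicitly leave this step open (``the step demanding the most care''). As written, therefore, the proposal is not a proof of the lemma.

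Moreover, this missing step cannot be supplied: it --- and in fact the lemma itself --- is false in this generality, so your flagged obstacle is fatal rather than merely delicate. Take $X=[0,1]$, $k=2$, and the $2$-periodic sequence $\vp_j(x)\equiv[0,1]$ for even $j$, $\vp_j(x)\equiv\{0\}$ for odd $j$ (each is a continuous multivalued map with compact values). Every map of the aligned iterate $\vp^{[2]}_{0,\infty}$ is then the constant map $x\mapsto\{0\}$, so $\orb{m}(\vp^{[2]}_{0,\infty})=\{(w_0,0,\ldots,0):w_0\in[0,1]\}$ and $s(\vp^{[2]}_{0,\infty},p,\frac{\ep}{2},m)\leq 1+\frac{2}{\ep}$ for all $m$; on the other hand, in an orbit $(x_0,\ldots,x_{2m-1})\in\orb{2m}(\vp_{0,\infty})$ the $m+1$ coordinates $x_0,x_1,x_3,\ldots,x_{2m-1}$ are completely unconstrained, so $s(\vp_{0,\infty},p,\ep,2m)\geq\bigl(1+\lfloor\tfrac{1}{2\ep}\rfloor\bigr)^{m+1}$, which exceeds $\bigl(1+\tfrac{2}{\ep}\bigr)^{2}$ for small $\ep$ and large $m$. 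Note that here the shifted iterate $(\vp_{1,\infty})^{[2]}$ consists of the full maps $x\mapsto[0,1]$, which is exactly where the exponential complexity hides, consistently with your product bound but not with the stated one. The same example gives $\hKT(\vp_{0,\infty})=\infty$ while $\hKT(\vp^{[2]}_{0,\infty})=0$, so the left inequality of Theorem~\ref{t:3.2}, whose proof rests on Lemma~\ref{l:KT}, fails as well. In short, the gap you identified is not just a gap in your write-up but the precise point at which the paper's claim that the autonomous proof of \cite{KT} transfers ``exactly the same'' breaks down: the lemma is true in the form your argument actually proves (with the product over shifted iterates on the right-hand side), or under hypotheses forcing all shifted iterates to coincide with the aligned one (e.g., autonomy), but not as stated.
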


Also the proof of the following theorem is similar to the one in \cite[Theorem 5.4]{KT}, but since it is an important result, we supply the proof to expose all the differences.

\begin{theorem}\label{t:3.2}
Let $(X,\U)$ be a compact uniform space, $\vp_j: X\multimap X$, $j\in\N\cup\{0\}$, be a sequence $\vp_{0,\infty}$ of multivalued  maps, and let $k\in\N$. Then
\begin{equation*}
\hKT(\vp_{0,\infty}) \leq \hKT(\vp^{[k]}_{0,\infty}) \leq k \cdot \hKT(\vp_{0,\infty}).
\end{equation*}
\end{theorem}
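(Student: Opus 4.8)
The plan is to prove both inequalities at the level of the separated-set growth rate $\hsep$ and then pass to $\hKT$ via Theorem~\ref{t:KT-sep-span} and Definition~\ref{d:hKT}. The two inequalities require comparing the separated counts of $\vp_{0,\infty}$ and of its iterate $\vp^{[k]}_{0,\infty}$ in \emph{opposite} directions: the left-hand inequality is exactly the one already encoded in Lemma~\ref{l:KT}, whereas the right-hand one will need a fresh ``fill-in'' construction.

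For the left inequality $\hKT(\vp_{0,\infty})\le\hKT(\vp^{[k]}_{0,\infty})$ I would start directly from Lemma~\ref{l:KT}. Fix $p\in\U$ and $\ep>0$; for each $n$ choose $m=m(n)$ with $(m-1)k<n\le mk$. Taking $\frac1n\log$ of the inequality in Lemma~\ref{l:KT} gives $\frac1n\log s(\vp_{0,\infty},p,\ep,n)\le \frac{km}{n}\cdot\frac1m\log s(\vp^{[k]}_{0,\infty},p,\frac\ep2,m)$. The choice of $m$ forces $1\le \frac{km}{n}<\frac{m}{m-1}$, so $\frac{km}{n}\to1$ as $n\to\infty$; since $m(n)$ runs through all large integers, taking $\limsup_n$ and using that $a_n\to1$ and $b_n\ge0$ imply $\limsup a_nb_n=\limsup b_n$ yields $\hsep(\vp_{0,\infty},p,\ep)\le\hsep(\vp^{[k]}_{0,\infty},p,\frac\ep2)$. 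Taking the supremum over $p\in\U$ and $\ep>0$ (note that $\frac\ep2$ sweeps out all positive reals) gives $\hKT(\vp_{0,\infty})\le\hKT(\vp^{[k]}_{0,\infty})$.

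For the right inequality $\hKT(\vp^{[k]}_{0,\infty})\le k\cdot\hKT(\vp_{0,\infty})$ I would build a map sending each $m$-orbit of the iterate to an $n$-orbit of the original with $n=(m-1)k+1$. Given $(\xi_0,\ldots,\xi_{m-1})\in\orb{m}(\vp^{[k]}_{0,\infty})$, the relation $\xi_{j+1}\in\vp^{[k]}_{jk}(\xi_j)=\vp_{jk+k-1}\circ\cdots\circ\vp_{jk}(\xi_j)$ means, by the very definition of composition of multivalued maps, that there are intermediate points filling each block; stringing these together produces a point of $\orb{n}(\vp_{0,\infty})$ in which $\xi_j$ occupies coordinate $jk$. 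Choosing one such fill-in for every iterate orbit defines a selection $\sigma$. The key point is that if two iterate $m$-orbits are $(p_m,\ep)$-separated, the separation is witnessed at some coordinate $j$, which is preserved as coordinate $jk$ of the filled orbits; hence $\sigma$ carries any $(p,\ep,m)$-separated set for $\vp^{[k]}_{0,\infty}$ injectively to a $(p,\ep,n)$-separated set for $\vp_{0,\infty}$, so that $s(\vp^{[k]}_{0,\infty},p,\ep,m)\le s(\vp_{0,\infty},p,\ep,(m-1)k+1)$. Taking $\frac1m\log$ and using $\frac{(m-1)k+1}{m}\to k$, together with the fact that the numbers $(m-1)k+1$ form a subsequence of the lengths $n$, gives $\hsep(\vp^{[k]}_{0,\infty},p,\ep)\le k\,\hsep(\vp_{0,\infty},p,\ep)$; the supremum over $p,\ep$ finishes this direction.

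The routine analytic bookkeeping (the two index correspondences $n\leftrightarrow m$ and the limits $\frac{km}{n}\to1$ and $\frac{(m-1)k+1}{m}\to k$, valid even when the entropy is infinite) is harmless. The genuine obstacle is the fill-in step for the right inequality: one must verify that the intermediate points supplied by the definition of composition indeed assemble into a legitimate element of $\orb{n}(\vp_{0,\infty})$ and, crucially, that separation at an iterate coordinate is inherited at the corresponding sampled coordinate, which is precisely what makes the selection injective and separation-preserving. Note that no continuity or regularity of the $\vp_j$ is used anywhere, only that they have nonempty values.
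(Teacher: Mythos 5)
Your proof is correct and follows essentially the same route as the paper: the left inequality is obtained from Lemma~\ref{l:KT} with the same index bookkeeping $(m-1)k<n\le mk$, and the right inequality uses the same fill-in construction, sampling an orbit of $\vp_{0,\infty}$ at the coordinates $jk$ to transport separated sets of $\vp^{[k]}_{0,\infty}$ injectively into separated sets of $\vp_{0,\infty}$. The only cosmetic difference is that you fill in to orbits of length $(m-1)k+1$ whereas the paper extends to length $nk$ (which additionally uses that values are nonempty to append the trailing points); both yield the same estimate.
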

\begin{proof}
We show that $\hKT(\vp^{[k]}_{0,\infty})\leq k \cdot \hKT(\vp_{0,\infty})$. Let $S$ be a $(p,\ep,n)$-separated set for $\vp^{[k]}_{0,\infty}$ with the maximal cardinality. Then $S$ is a $(p_n,\ep)$-separated subset of $\orbn(\vp^{[k]}_{0,\infty})$. For every $(x_0,\ldots,x_{n-1})\in S$, we can find $(y_0,\ldots,y_{nk-1})\in\orb{nk}(\vp_{0,\infty})$ such that $y_{ik}=x_i$, for $i=1,\ldots,n-1$. Let us denote the set of these elements by $\tilde{S}$. Then $\card{\tilde{S}} = \card{S}$, and $\tilde{S}$ is a $(p,\ep,nk)$-separated set for $\vp_{0,\infty}$. Hence,
\begin{align*}
s(\vp^{[k]}_{0,\infty},p,\ep,n) &\leq s(\vp_{0,\infty},p,\ep,nk),\\
\limsup_{n\to\infty} \frac1{n}\log s(\vp^{[k]}_{0,\infty},p,\ep,n) &\leq k \limsup_{n\to\infty} \frac1{nk} \log s(\vp_{0,\infty},p,\ep,nk)\\
&\leq k \limsup_{m\to\infty} \frac1{m} \log s(\vp_{0,\infty},p,\ep,m).
\end{align*}
After passing to the supremum over $p\in\U$ and $\ep>0$, we obtain the inequality $\hKT(\vp^{[k]}_{0,\infty})\leq k \cdot \hKT(\vp_{0,\infty})$.

To prove the inequality $\hKT(\vp_{0,\infty}) \leq \hKT(\vp^{[k]}_{0,\infty})$, we apply Lemma~\ref{l:KT}. Given $n\in\N$, we consider the unique $m_n\in\N$ such that $(m_n-1)k<n\leq m_nk$. Then
\begin{equation*}
s(\vp_{0,\infty},p,\ep,n) \leq \left( s(\vp^{[k]}_{0,\infty},p,\frac{\ep}2,m_n)\right)^k.
\end{equation*}
Inferring that $\frac{k}{n}<\frac1{m_n-1}$, for $n>1$, and $\lim_{n\to\infty} m_n = \infty$, we get
\begin{align*}
&\limsup_{n\to\infty} \frac1{n}\log s(\vp_{0,\infty},p,\ep,n) \leq \limsup_{n\to\infty} \frac1{n} \log \left( s(\vp^{[k]}_{0,\infty},p,\frac{\ep}2,m_n)\right)^k\\
&= \limsup_{n\to\infty} \frac{k}{n} \log s(\vp^{[k]}_{0,\infty},p,\frac{\ep}2,m_n) \leq \limsup_{m\to\infty} \frac1{m_n-1} \log s(\vp^{[k]}_{0,\infty},p,\frac{\ep}2,m_n)\\
&\leq \limsup_{m\to\infty} \frac1{m-1} \log s(\vp^{[k]}_{0,\infty},p,\frac{\ep}2,m) = \limsup_{m\to\infty} \frac1{m} \log s(\vp^{[k]}_{0,\infty},p,\frac{\ep}2,m).
\end{align*}
Taking a supremum over $p\in\U$ and $\ep>0$, we conclude that $\hKT(\vp_{0,\infty}) \leq \hKT(\vp^{[k]}_{0,\infty})$.

\end{proof}

Now, the Adler-Konheim-McAndrew type definitions will be under consideration.

\begin{definition}\label{d:3.7}
Let  $X$ be a compact topological space and $\vp_j: X\multimap X$, $j\in\N\cup\{0\}$, be a sequence $\vp_{0,\infty}$ of multivalued  maps. For an open cover $\A$ of $X$ and $n\in\N$, $n\geq 2$, we define
\begin{equation*}
\A^n:= \{ U_0\times\ldots\times U_{n-1}: U_0,\ldots,U_{n-1}\in\A\}.
\end{equation*}
Letting $N(\vp_{0,\infty},\A,n)$ be a minimal cardinality of a subcover of $\A^n$ covering $\orbn(\vp_{0,\infty})$, let us put (along the lines of the papers \cite{AK,EK}, whose authors have the initials AK and EK)
\begin{align*}
\hAK(\vp_{0,\infty}) := \sup_{\A} \limsup_{n\to\infty} \frac1{n} \log N(\vp_{0,\infty},\A,n),
\end{align*}
where the supremum is taken over all open covers $\A$ of $X$.
\end{definition}

\begin{remark}
Definition~\ref{d:3.7} generalizes its particular cases for multivalued autonomous u.s.c. maps $\vp:X\to\KK(Y)$, $\vp=\vp_j$, $j\in\N\cup\{0\}$, in a compact metric space $X$ in \cite[Definition 2.2]{AK}, and in a unit cube $[0,1]^n$ in \cite{EK}. Its slightly more general analog for a sequence $f_{0,\infty}$ of single-valued continuous maps $f_j:X\to X$, $j\in\N\cup\{0\}$, was given in \cite[Section 2]{KS}.
\end{remark}

\begin{theorem}\label{t:KT=AK}
Let $X$ be a compact Hausdorff space and $\vp_j: X\multimap X$, $j\in\N\cup\{0\}$, be a sequence $\vp_{0,\infty}$ of multivalued  maps. Then $\hKT(\vp_{0,\infty})=\hAK(\vp_{0,\infty})$.
\end{theorem}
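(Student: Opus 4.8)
The plan is to establish the two inequalities $\hAK(\vp_{0,\infty}) \le \hKT(\vp_{0,\infty})$ and $\hKT(\vp_{0,\infty}) \le \hAK(\vp_{0,\infty})$ separately, as in the classical single-valued metric case, exploiting the fact (Theorem~\ref{t:KT-sep-span} together with Definition~\ref{d:hKT}) that $\hKT$ may be computed interchangeably from the spanning numbers $r(\vp_{0,\infty},p,\ep,n)$ and the separation numbers $s(\vp_{0,\infty},p,\ep,n)$; this lets me use spanning sets for one direction and separated sets for the other. The bridge between the covering and metric pictures is the identification of the $p_n$-balls from \eqref{eq:0} with products of balls in $X$, i.e. $\{\bar y : p_n(\bar x,\bar y)<\delta\}=\prod_{i=0}^{n-1}B_p(x_i,\delta)$, which lets me pass between elements of $\A^n$ and dynamical balls. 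Throughout I work with the unique uniformity $\U$ compatible with the compact Hausdorff topology of $X$, recalling that each $B_p(x,r)$, $p\in\U$, is open; finiteness of $s,r$ is guaranteed by the remark following Definition~\ref{d:hsepspan}, and finiteness of $N(\vp_{0,\infty},\A,n)$ by compactness of $X$ (a finite subcover $\A_0\subset\A$ gives the finite subcover $\A_0^n$ of $X^n\supset\orbn(\vp_{0,\infty})$).

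For $\hAK \le \hKT$ I would fix an arbitrary open cover $\A$ of $X$ and feed it into the uniform Lebesgue lemma (Lemma~\ref{l:Lebesgue}): this yields $p\in\U$ and $\delta>0$ such that every ball $B_p(x,2\delta)$ lies in some member of $\A$. Taking a minimal $(p,\delta,n)$-spanning set $R$ for $\vp_{0,\infty}$, for each $\bar x=(x_0,\dots,x_{n-1})\in R$ I pick $A_i\in\A$ with $\overline{B_p(x_i,\delta)}\subset B_p(x_i,2\delta)\subset A_i$ and set $W_{\bar x}:=A_0\times\dots\times A_{n-1}\in\A^n$. Since the closed $p_n$-balls of radius $\delta$ about the points of $R$ cover $\orbn(\vp_{0,\infty})$ and each is contained in the corresponding $W_{\bar x}$, the family $\{W_{\bar x}:\bar x\in R\}$ is a subcover of $\A^n$ covering $\orbn(\vp_{0,\infty})$. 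Hence $N(\vp_{0,\infty},\A,n)\le\card R=r(\vp_{0,\infty},p,\delta,n)$, and passing to $\limsup_{n}\frac1n\log(\cdot)$ and then to the supremum over $\A$ gives $\hAK(\vp_{0,\infty})\le\hspan(\vp_{0,\infty})=\hKT(\vp_{0,\infty})$.

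For the reverse inequality I would fix $p\in\U$ and $\ep>0$ and choose the specific open cover $\A:=\{B_p(x,\tfrac{\ep}{2}):x\in X\}$. The key combinatorial observation is that no element $U_0\times\dots\times U_{n-1}\in\A^n$ can contain two distinct points $\bar a,\bar b$ of a $(p,\ep,n)$-separated set $S$: each $U_i$ is a ball of radius $\tfrac\ep2$, so the triangle inequality forces $p(a_i,b_i)<\ep$ for every $i$, whence $p_n(\bar a,\bar b)<\ep$, contradicting separation. Consequently any subcover of $\A^n$ covering $\orbn(\vp_{0,\infty})\supset S$ must have at least $\card S$ members, so $s(\vp_{0,\infty},p,\ep,n)\le N(\vp_{0,\infty},\A,n)$; taking $\limsup_{n}\frac1n\log(\cdot)$ and the supremum over $p$ and $\ep$ yields $\hKT(\vp_{0,\infty})=\hsep(\vp_{0,\infty})\le\hAK(\vp_{0,\infty})$, and the two bounds together give the asserted equality. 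The only genuinely non-formal ingredient is the uniform Lebesgue lemma used in the first direction; the main thing to watch is the bookkeeping of radii (the $\delta$ versus $2\delta$ passage and the $\leq$ versus $<$ distinction between spanning and separation), together with the remark that the product $p_n$-ball factors as a product of balls, which is what makes the otherwise purely combinatorial second direction transparent.
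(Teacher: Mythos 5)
Your proof is correct and follows essentially the same route as the paper's: one direction uses the cover $\A=\{B_p(x,\tfrac{\ep}{2}):x\in X\}$ together with the observation that no element of $\A^n$ can contain two $(p,\ep,n)$-separated points, and the other uses Lemma~\ref{l:Lebesgue} to dominate $N(\vp_{0,\infty},\A,n)$ by the cardinality of a minimal spanning set. The only differences are minor bookkeeping choices of radii (your $\delta$ versus $2\delta$ against the paper's $\delta/3$ versus $\delta/2$), which do not affect the argument.
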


\begin{proof}
Every compact Hausdorff space is (uniquely) uniformizable. Therefore, there is a unique (compact) uniform space $(X,\U)$ with a compatible topology.

Let $\ep>0$ and $p\in\U$. We take an open cover $\A:=\{B_p(x,\frac{\ep}2): x\in X\}$ of $X$. Let $S$ be a $(p_n,\ep)$-separated subset of $\orbn(\vp_{0,\infty})$ with the minimal cardinality. Then, for two distinct elements $(x_0,\ldots,x_{n-1})$, $(y_0,\ldots,y_{n-1})$ of $S$, there exists $j\in\{0,\ldots,n-1\}$ such that $p(x_j,y_j)\geq\ep$. Consequently, $x_j,y_j$ cannot be together elements of one $A\in\A$. Hence,
\begin{align*}
s(\vp_{0,\infty},p,\ep,n) = \card S &\leq N(\vp_{0,\infty},\A,n),\\
\limsup_{n\to\infty} \frac1{n}\log s(\vp_{0,\infty},p,\ep,n) &\leq \limsup_{n\to\infty} \frac1{n}\log N(\vp_{0,\infty},\A,n)\leq \hAK(\vp_{0,\infty}),\\
\hsep(\vp_{0,\infty}) &\leq \hAK(\vp_{0,\infty}).
\end{align*}

Now, let $\A$ be an open cover of $X$. We apply Lemma~\ref{l:Lebesgue} on $\A$ which provides $p\in\U$, $\delta>0$ such that, for every $x\in X$, there is $A\in\A$ such that $B_p(x,\delta)\subset A$.

For $n\in\N$, let $R$ be a $(p_n,\frac{\delta}3)$-spanning set in $\orbn(\vp_{0,\infty})$ with the minimal cardinality. Then $\B:=\{B_{p_n}(\bar{x},\frac{\delta}2): \bar{x}\in R\}$ is clearly an open cover of $\orbn(\vp_{0,\infty})$. Moreover, for every $\bar{x}\in S$, there exists $A\in\A^n$ such that $B_{p_n}(\bar{x},\frac{\delta}2)\subset A$. Thus,
\begin{align*}
 N(\vp_{0,\infty},\A,n) &\leq \card R = r(\vp_{0,\infty},p,\frac{\delta}2,n),\\
\limsup_{n\to\infty} \frac1{n}\log N(\vp_{0,\infty},\A,n) &\leq \limsup_{n\to\infty} \frac1{n}\log r(\vp_{0,\infty},p,\frac{\delta}2,n) \leq \hspan(\vp_{0,\infty}),\\
\hAK(\vp_{0,\infty}) &\leq \hspan(\vp_{0,\infty}).
\end{align*}

Applying Definition~\ref{d:hKT}, we get the desired conclusion, i.e. $\hKT(\vp_{0,\infty})=\hAK(\vp_{0,\infty})$.

\end{proof}

\begin{remark}
For multivalued autonomous u.s.c. maps $\vp:X\to\KK(X)$, $\vp=\vp_j$, $j\in\N\cup\{0\}$, in a compact metric space $X$, it was just pointed out in \cite{AK} that Theorem~\ref{t:KT=AK} is a consequence of \cite[Theorem 2.2]{RT}, \cite[Theorem 3.1]{KT}, and \cite[Theorem 4.3]{EK}. Let us note that the argument in \cite[Theorem 4.3]{EK} is restricted to a unit interval $[0,1]$, resp. later to a unit cube $[0,1]^n$. For a sequence $f_{0,\infty}$ of single-valued continuous maps $f_j:X\to X$, $j\in\N\cup\{0\}$, Theorem~\ref{t:KT=AK} was proved in a slightly more general form in \cite[Theorem 2.4]{Sh}.
\end{remark}

\begin{definition}\label{d:3.8}
Let $X$ be a compact Hausdorff space and $\vp_j:X\multimap X$, $j\in\N\cup\{0\}$, be a sequence $\vp_{0,\infty}$ of multivalued maps. Then, in view of Theorem~\ref{t:KT=AK}, we can put
\begin{equation*}
h(\vp_{0,\infty}) := \hKT(\vp_{0,\infty})=\hAK(\vp_{0,\infty}).
\end{equation*}
\end{definition}
%%%%%%%%%%%%%%%%%%%%%%%%%%%%%%%%%%%%%%%%%%%%%%%%%%%%

\section{Parametric topological entropy on orbits of coincidences}\label{s:4}

Let $(r,q)_{0,\infty}$ be the sequence of selected pairs $(r_j,q_j)_{j=0}^{\infty}$, where $X=(X,\U)$, $Z=(Z,\V)$ are compact uniform spaces and $X\overset{r_j}{\leftarrow}Z\overset{q_j}{\rightarrow} X$, $j\in\N\cup\{0\}$, are single-valued continuous maps such that $r_j$, $j\in\N\cup\{0\}$, are surjections.

Let $p\in\U$, $n$ be a positive integer and $\ep>0$. We say that the subset $S\subset\coinn((r,q)_{0,\infty})$ is \emph{$(p,\ep,n)$-separated for $(r,q)_{0,\infty}$}, resp. the subset $R\subset\coinn((r,q)_{0,\infty})$ is \emph{$(p,\ep,n)$-spanning for $(r,q)_{0,\infty}$}, if it is a $(p^*_n,\ep)$-separated subset of $\coinn((r,q)_{0,\infty})$, resp. a $(p^*_n,\ep)$-spanning set in $\coinn((r,q)_{0,\infty})$, where
\begin{align*}
p^*_n((z_0,\ldots,z_{n-1})&,(z'_0,\ldots,z'_{n-1}))\\
 &:=\max_{i=0,\ldots,n-1} \max\{p((r_i(z_i)),r_i(z'_i)), p(q_i(z_i),q_i(z'_i))\},
\end{align*}
for $(z_0,\ldots,z_{n-1}),(z'_0,\ldots,z'_{n-1})\in Z^n$.

%$p_1(z_k,z'_k)>\ep$ holds for at least one $k\in\{0,\ldots,n-1\}$, where
%\begin{equation*}
%p_1(z_k,z'_k):= \max\{p((r_k(z_k)),r_k(z'_k)), p(q_k(z_k),q_k(z'_k))\}
%\end{equation*}
%stands for the pseudometric on $Z$.

%Denoting still by $s((r,q)_{0,\infty},p,\ep,n)$ the maximal number of $n$-orbits of coincidences of $(r,q)_{0,\infty}$, i.e. the maximal cardinality of a subset of $\coinn((r,q)_{0,\infty})$, such that any two of its elements are $(p,\ep,n)$-separated, we put
%
%\begin{equation*}
%h((r,q)_{0,\infty},\ep):= \limsup_{n\to\infty}\frac1{n}\log s((r,q)_{0,\infty},p,\ep,n).
%\end{equation*}

Adopting Lemma~\ref{l:sep-span}, we can proceed in accordance with Definition~\ref{d:hKT} by the following definition.

\begin{definition}\label{d:4.1}
We call the \emph{parametric topological entropy} (on orbits of coincidences) $h((r,q)_{0,\infty})$, for a family $(r,q)_{0,\infty}$ of selected pairs $(r_j,q_j)^{\infty}_{j=0}$, provided
\begin{align*}
h((r,q)_{0,\infty})&:= \sup_{\ep>0,p\in\U} \limsup_{n\to\infty}\frac1{n}\log s((r,q)_{0,\infty},p,\ep,n) \\
&= \sup_{\ep>0,p\in\U} \limsup_{n\to\infty}\frac1{n}\log r((r,q)_{0,\infty},p,\ep,n),
\end{align*}
where $s((r,q)_{0,\infty},p,\ep,n)$ stands for the maximal cardinality of a $(p,\ep,n)$-separated set for $(r,q)_{0,\infty}$, and $r((r,q)_{0,\infty},p,\ep,n)$ for the minimal cardinality of a $(p,\ep,n)$-spanning set for $(r,q)_{0,\infty}$.
\end{definition}

The existence of the finite maximal and minimal cardinalities in Definition~\ref{d:4.1} is guaranteed by Lemma~\ref{l:3.2}, because $(Z,\V)$ is a compact uniform space, $(Z^n,\V^n)$ is compact uniform as well, and $p^*_n\in\V^n$. Indeed, since $r_j$, $q_j$, $j\in\N\cup\{0\}$, are continuous maps between uniform spaces, they are also uniformly continuous. Also the canonical projections $\pi_i:Z^n\to Z$, $i=1,\ldots,n$, are uniformly continuous. Hence, $p(r_i\circ \pi_i(\cdot),r_i\circ \pi_i(\cdot)), p(q_i\circ \pi_i(\cdot),q_i\circ \pi_i(\cdot))\in\V^n$, $i=1,\ldots,n$, and also $p^*_n\in\V$, because it is a maximum of these pseudometrics.

Observe that if $r_j:Z\to X$, $j\in\N\cup\{0\}$, are continuous proper surjections and $X$ is compact, so must be $Z$. Furthermore, since the graphs $\Gamma_{r_j}$ of $r_j$, $j\in\N\cup\{0\}$, are closed in $Z\times X$, so must be the graphs $\Gamma_{r^{-1}_j}$ of (multivalued) $r^{-1}_j$, $j\in\N\cup\{0\}$, in $X\times Z$. Consequently, the multivalued maps $r^{-1}_j$ are, according to Proposition~\ref{p:2.1}, u.s.c. with compact values, i.e. $r^{-1}_j:X\to\KK(Z)$, $j\in\N\cup\{0\}$. The same is obviously true for the compositions $\vp_j=q_j\circ r^{-1}_j$, $j\in\N\cup\{0\}$, as well as for $\vp_{j+k}\circ\ldots\circ\vp_j = q_{j+k}\circ r^{-1}_{j+k} \circ \ldots \circ q_j\circ r^{-1}_j$, $j\in\N\cup\{0\}$, $k\in\N$, provided $q_j$, $j\in\N\cup\{0\}$, are continuous.

\begin{lemma}\label{l:4.1}
Let $X$, $Z$ be compact uniform spaces and $\vp_j=q_j\circ r_j^{-1}:X\multimap X$, $j\in\N\cup\{0\}$, where $q_j,r_j: Z\to X$, $j\in\N\cup\{0\}$, are single-valued continuous maps such that $r_j$, $j\in\N\cup\{0\}$, are surjections. Then the equality
\begin{align}\label{eq:1}
s(\vp_{0,\infty},p,\ep,n+1) = s((r,q)_{0,\infty},p,\ep,n)
\end{align}
holds for the maximal cardinalities of $(p,\ep,n+1)$-separated $(n+1)$-orbits for $\vp_{0,\infty}= (q_j\circ r_j^{-1})_{j=0}^{\infty}$ in Definition~\ref{d:3.2} and $(p,\ep,n)$-separated $n$-orbits of coincidences for $(r,q)_{0,\infty}$ in Definition~\ref{d:4.1}.
\end{lemma}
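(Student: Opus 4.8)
The plan is to realise the claimed equality as a transport of maximal separated sets along a single surjective pseudometric‑preserving map, after which Lemma~\ref{l:sep} does all the work. Concretely, I would introduce the map $\Psi:\coinn((r,q)_{0,\infty})\to\orb{n+1}(\vp_{0,\infty})$ defined by
\begin{equation*}
\Psi(z_0,\ldots,z_{n-1}) := (r_0(z_0),\ldots,r_{n-1}(z_{n-1}),q_{n-1}(z_{n-1})),
\end{equation*}
and then verify three things about it: that it lands in $\orb{n+1}(\vp_{0,\infty})$, that it is onto, and that it carries $p^*_n$ to $p_{n+1}$.

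First I would check that $\Psi$ really takes values in $\orb{n+1}(\vp_{0,\infty})$. Writing $x_i:=r_i(z_i)$ for $i\leq n-1$ and $x_n:=q_{n-1}(z_{n-1})$, the coincidence relations $q_i(z_i)=r_{i+1}(z_{i+1})$ give $x_{i+1}=q_i(z_i)$ for $i=0,\ldots,n-2$, while $x_n=q_{n-1}(z_{n-1})$ by definition; since $z_i\in r_i^{-1}(x_i)$ in every case, each $x_{i+1}\in q_i(r_i^{-1}(x_i))=\vp_i(x_i)$, which is exactly the orbit condition. Surjectivity is the dual computation and uses the hypothesis $\vp_j=q_j\circ r_j^{-1}$ essentially: given $(x_0,\ldots,x_n)\in\orb{n+1}(\vp_{0,\infty})$, the relation $x_{i+1}\in\vp_i(x_i)=q_i(r_i^{-1}(x_i))$ lets me choose, for each $i$, a point $z_i\in r_i^{-1}(x_i)$ with $q_i(z_i)=x_{i+1}$; then $q_i(z_i)=x_{i+1}=r_{i+1}(z_{i+1})$ shows $(z_0,\ldots,z_{n-1})\in\coinn((r,q)_{0,\infty})$, and $\Psi$ sends it back to $(x_0,\ldots,x_n)$.

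The one step carrying genuine content is the identity $p^*_n(z,z')=p_{n+1}(\Psi(z),\Psi(z'))$ for $z,z'\in\coinn((r,q)_{0,\infty})$. By definition $p^*_n(z,z')$ is the maximum of all the numbers $p(r_i(z_i),r_i(z'_i))$ and $p(q_i(z_i),q_i(z'_i))$ over $i=0,\ldots,n-1$. Here I would exploit the coincidence constraint on \emph{both} orbits simultaneously: for $i=0,\ldots,n-2$ it forces $p(q_i(z_i),q_i(z'_i))=p(r_{i+1}(z_{i+1}),r_{i+1}(z'_{i+1}))$, so the first $n-1$ of the $q$‑terms coincide with shifted $r$‑terms and drop out of the maximum. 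What survives is precisely the maximum of $p(r_i(z_i),r_i(z'_i))$ for $i=0,\ldots,n-1$ together with the single term $p(q_{n-1}(z_{n-1}),q_{n-1}(z'_{n-1}))$, which is exactly $p_{n+1}(\Psi(z),\Psi(z'))$ read off the $n+1$ coordinates of $\Psi$.

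With these three facts in hand the conclusion is immediate: part (i) of Lemma~\ref{l:sep} turns any $(p^*_n,\ep)$‑separated subset of $\coinn((r,q)_{0,\infty})$ into a $(p_{n+1},\ep)$‑separated subset of $\orb{n+1}(\vp_{0,\infty})$ of the same cardinality, giving $s((r,q)_{0,\infty},p,\ep,n)\leq s(\vp_{0,\infty},p,\ep,n+1)$, while part (ii), applicable because $\Psi$ is surjective, yields the reverse inequality; hence \Eq{1} follows. I expect the pseudometric identity to be the only delicate point, and the sole subtlety there is to keep in mind that the coincidence relation holds for $z$ and $z'$ at once, so that the collapsed $q$‑terms genuinely equal the shifted $r$‑terms rather than merely being bounded by them.
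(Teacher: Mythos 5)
Your proposal is correct and follows essentially the same route as the paper: the same map $(z_0,\ldots,z_{n-1})\mapsto(r_0(z_0),\ldots,r_{n-1}(z_{n-1}),q_{n-1}(z_{n-1}))$, the same verification that it is well defined and surjective, the same isometry identity exploiting the coincidence relation on both orbits so that the shifted $q$-terms are absorbed into the $r$-terms, and the same final appeal to both parts of Lemma~\ref{l:sep}. No gaps; the only cosmetic difference is that you read the pseudometric identity from $p^*_n$ toward $p_{n+1}$ while the paper computes in the opposite direction.
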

\begin{proof}
We construct the surjective mapping $\Phi:\coinn((r,q)_{0,\infty})\to \orb{n+1}(\vp_{0,\infty})$ satisfying
\begin{equation*}
p^*_{n}(\overline{z},\overline{z}') = p_{n+1}(\Phi(\overline{z}),\Phi(\overline{z}')), \mbox{ for every $\overline{z},\overline{z}'\in \coinn((r,q)_{0,\infty})$}.
\end{equation*}
Subsequently, the application of Lemma~\ref{l:sep} leads to a desired conclusion.

Coming back to the construction of $\Phi$, let $\overline{z}=(z_0,\ldots,z_{n-1})\in \coinn((r,q)_{0,\infty})$. Then $\overline{x}=(r_0(z_0), \ldots, r_{n-1}(z_{n-1}), q_{n-1}(z_{n-1}))\in \orb{n+1}(\vp_{0,\infty})$, because
\begin{align*}
\vp_i(r_i(z_i)) &= q_i \circ r_i^{-1}(r_i(z_i)) \in q_i(z_i) = r_{i+1}(z_{i+1}) \mbox{, for $i=0,\ldots,n-2$,}\\
\vp_{n-1}(r_{n-1}(z_{n-1})) &= q_{n-1} \circ r_{n-1}^{-1}(r_{n-1}(z_{n-1}))\in q_{n-1}(z_{n-1}).
\end{align*}

We define $\Phi(\overline{z}):=\overline{x}$.

Let, on the other hand, $\overline{x}=(x_0,\ldots,x_{n})\in \orb{n+1}(\vp_{0,\infty})$. For every $i=0,\ldots,n-1$, it holds $x_{i+1}=\vp_i(x_i)$, i.e. $x_{i+1}=q_i\circ r_i^{-1}(x_i)$, which means that there exists $z_i\in Z$ such that $x_{i+1}=q_i(z_i)$ and $x_i=r_i(z_i)$. Thus, $\overline{z} = (z_0,\ldots,z_{n-1})\in \coinn((r,q)_{0,\infty})$, $\Phi(\overline{z})=\overline{x}$, and $\Phi$ is surjective.

Finally, let $\overline{z},\overline{z}'\in \coinn((r,q)_{0,\infty})$, where $\overline{z}=(z_0,\ldots,z_{n-1})$, $\overline{z}'=(z'_0,\ldots,z'_{n-1})$. The following straightforward verification concludes the proof:
\begin{align*}
&p_{n+1}(\Phi(\overline{z}),\Phi(\overline{z}'))\\
&= p_{n+1}((r_0(z_0), \ldots, r_{n-1}(z_{n-1}), q_{n-1}(z_{n-1})),(r_0(z'_0), \ldots, r_{n-1}(z'_{n-1}), q_{n-1}(z'_{n-1}))\\
&= \max\{\max_{i=0,\ldots,n-1}\{p(r_i(z_i),r_i(z'_i))\}; p(q_{n-1}(z_{n-1}),q_{n-1}(z'_{n-1}))\}\\
&=\max\{\max_{i=0,\ldots,n-1}\{p(r_i(z_i),r_i(z'_i))\}; \max_{i=0,\ldots,n-1} p(q_{i}(z_{i}),q_{i}(z'_{i}))\}\\
&= \max_{i=0,\ldots,n-1} \max\{p((r_i(z_i)),r_i(z'_i)), p(q_i(z_i),q_i(z'_i))\}\\
&= p_n^*(\overline{z},\overline{z}').
\end{align*}
\end{proof}

%\begin{lemma}\label{l:4.1b}
%Let $X$, $Z$ be compact uniform spaces and $\vp_j=q_j\circ r_j^{-1}:X\multimap X$, $j\in\N\cup\{0\}$, where $q_j,r_j: Z\to X$, $j\in\N\cup\{0\}$, are single-valued continuous maps such that $r_j$, $j\in\N\cup\{0\}$, are surjections. Then the equality
%\begin{align}\label{eq:1b}
%r(\vp_{0,\infty},p,\ep,n+1) = r((r,q)_{0,\infty},p,\ep,n)
%\end{align}
%holds for the maximal cardinalities of $(p,\ep,n+1)$-spanning $(n+1)$-orbits of $\vp_{0,\infty}= (q_j\circ r_j^{-1})_{j=0}^{\infty}$ in Definition~\ref{d:3.2} and $(p,\ep,n)$-spanning $n$-orbits of coincidences of $(r,q)_{0,\infty}$ in Definition~\ref{d:4.1}.
%\end{lemma}

\begin{remark}\label{r:4.1}
In view of Lemma~\ref{l:4.1}, the equality
\begin{align}\label{eq:2}
h(\vp_{0,\infty}) = h((r,q)_{0,\infty})
\end{align}
is (under the above assumptions) satisfied for the parametric topological entropies of $\vp_{0,\infty}$ and $(r,q)_{0,\infty}$ such that $\vp_j=q_j\circ r^{-1}_j$, $j\in\N\cup\{0\}$.
\end{remark}

On the other hand, the calculation or at least estimation of $h((r,q)_{0,\infty})$ is in this particular case still a difficult task in general. That is why it seems to be natural to impose for this goal some further restrictions on the space $X$ or on the selected pairs $(r,q)_{0,\infty}$.

One possibility is to consider the $m$-dimensional torus, i.e. $X=\T^m=\R^m/\Z^m$, and to require additionally that the values of $r_j^{-1}(x)$, $j\in\N\cup\{0\}$, are $R_\delta$-sets for every $x\in\T^m$. Then the multivalued maps $\vp_j=q_j\circ r_j^{-1}$, $j\in\N\cup\{0\}$, become admissible in the sense of G\'{o}rniewicz (see e.g. \cite[Chapter I.4]{AG}), and so we are able to define for them the Nielsen and Lefschetz numbers (see \cite{AG,AJ}).

In \cite{AL3}, we have derived in their terms the following Ivanov-type inequality for the lower estimate of $h((r,q)_{0,\infty})$ on $\T^m$.

\begin{prop}[cf. {\cite[Proposition 5]{AL3}}]\label{p:4.1}
Let the sequence $\vp_{0,\infty}$ of admissible maps $\vp_j:\T^m\to\KK(\T^m)$, $j\in\N\cup\{0\}$, be determined by the family $(r,q)_{0,\infty}$ of admissible selected pairs $(r_j,q_j)_{j=0}^\infty$, i.e. $\vp_j=q_j\circ r_j^{-1}$, $j\in\N\cup\{0\}$, where $r_j:Z\Rightarrow\T^m$, $j\in\N\cup\{0\}$, are equi-continuous Vietoris maps such that the values $r_j^{-1}(x)$ are $R_\delta$-maps, for every $x\in\T^m$, and $q_j:Z\to\T^m$, $j\in\N\cup\{0\}$, are equi-continuous maps. Then the inequality
\begin{align}
h((r,q)_{0,\infty}) \geq \log \max\left\{1,\limsup_{n\to\infty} \left| L(\vp_{n-1}\circ\ldots\circ\vp_0)\right|^{\frac1{n}}\right\},
\end{align}\label{eq:3}
holds for the parametric topological entropy $h((r,q)_{0,\infty})$ of $(r,q)_{0,\infty}$, defined in Definition~\ref{d:4.1}, where $L(\vp_{n-1}\circ\ldots\circ\vp_0)$ stands for the Lefschetz number of the composition $\vp_{n-1}\circ\ldots\circ\vp_0$, $n\in\N$ (for its definition, see e.g. \cite[Chapter I.6]{AG}).
\end{prop}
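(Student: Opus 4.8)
The plan is to reduce the inequality to the topological entropy on orbits and then to realize an Ivanov-type mechanism: essential Nielsen classes of the composed self-maps produce exponentially many separated periodic orbits. First, by Remark~\ref{r:4.1} we have $h((r,q)_{0,\infty}) = h(\vp_{0,\infty})$, so it suffices to bound $h(\vp_{0,\infty})$ from below. For each $n\in\N$ the composition $\Phi_n := \vp_{n-1}\circ\ldots\circ\vp_0 : \T^m \to \KK(\T^m)$ is again admissible (a composition of admissible maps induced by Vietoris pairs), so its Lefschetz number $L(\Phi_n)$ and Nielsen number $N(\Phi_n)$ are well defined in the sense of \cite{AG,AJ}. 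I would equip $\T^m$ with the flat pseudometric $p$ inherited from the Euclidean metric on $\R^m$; this belongs to the (unique) uniformity $\U$ compatible with the topology of $\T^m$, and since $\orb{n}(\vp_{0,\infty})$ is always nonempty, $s(\vp_{0,\infty},p,\ep,n)\geq 1$ and hence $h(\vp_{0,\infty})\geq 0$.

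The topological input is the Anosov-type identity on the torus. The induced homomorphism $(\Phi_n)_{*}$ on $H_1(\T^m;\Q)\cong\Q^m$ is given by the integer matrix $B_n = A_{n-1}\cdots A_0$, where $A_j$ is the linear part of $\vp_j$; thus $L(\Phi_n) = \det(I - B_n)$, and by the Anosov theorem in its admissible-map version on tori, $N(\Phi_n) = |\det(I - B_n)| = |L(\Phi_n)|$. By Nielsen theory for admissible maps, whenever $L(\Phi_n)\neq 0$ the map $\Phi_n$ possesses at least $N(\Phi_n)$ fixed points (points $x$ with $x\in\Phi_n(x)$) lying in pairwise distinct essential Nielsen classes; each such fixed point $x^{(k)}$ unwinds to a periodic orbit segment $\bar{x}^{(k)} = (x_0^{(k)},\ldots,x_n^{(k)}) \in \orb{n+1}(\vp_{0,\infty})$ with $x_0^{(k)} = x_n^{(k)} = x^{(k)}$.

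The heart of the argument is to show that orbit segments coming from distinct essential classes are $(p,\ep,n+1)$-separated for one and the same $\ep>0$, independent of $n$. I would argue by lifting to the universal cover $\R^m$: fix lifts $\tilde{\vp}_j:\R^m\to\KK(\R^m)$ and the resulting lift $\tilde{\Phi}_n = \tilde{\vp}_{n-1}\circ\ldots\circ\tilde{\vp}_0$; then the class-$k$ fixed point lifts to $\tilde{x}_0^{(k)}$ with $\tilde{x}_n^{(k)} = \tilde{x}_0^{(k)} + v_k$ for some $v_k\in\Z^m$, and two classes are distinct precisely when $v_k \nequiv v_l$ modulo $(B_n - I)\Z^m$, so in particular $v_k\neq v_l$. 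If the two projected orbits satisfied $p(x_i^{(k)},x_i^{(l)})\leq\ep$ for every $i$, then, choosing the lift of the second orbit nearest to the first and using the uniform equi-continuity of $\{\tilde{\vp}_j\}$ (inherited from the equi-continuity of $r_j,q_j$) to propagate this closeness, one obtains $|\tilde{x}_0^{(k)}-\tilde{x}_0^{(l)}|\leq\ep$ and $|\tilde{x}_n^{(k)}-\tilde{x}_n^{(l)}|\leq\ep$, whence
\[
|v_k - v_l| = \bigl|(\tilde{x}_n^{(k)}-\tilde{x}_n^{(l)}) - (\tilde{x}_0^{(k)}-\tilde{x}_0^{(l)})\bigr| \leq 2\ep .
\]
Taking $\ep<\tfrac12$ forces $v_k = v_l$, a contradiction; crucially, the same threshold works for all $n$, which is exactly what the outer $\limsup_n$ (for fixed $\ep$) requires.

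Consequently $s(\vp_{0,\infty},p,\ep,n+1) \geq N(\Phi_n) = |L(\Phi_n)|$ for all $n$, and, since the index shift $n\mapsto n+1$ does not affect the exponential rate,
\[
h(\vp_{0,\infty}) \geq \limsup_{n\to\infty}\frac1{n}\log s(\vp_{0,\infty},p,\ep,n+1) \geq \limsup_{n\to\infty}\frac1{n}\log |L(\vp_{n-1}\circ\ldots\circ\vp_0)| = \log\limsup_{n\to\infty}|L(\vp_{n-1}\circ\ldots\circ\vp_0)|^{\frac1{n}} .
\]
Combining this with the nonnegativity $h(\vp_{0,\infty})\geq 0$ yields the asserted $\log\max\{1,\cdot\}$. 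I expect the main obstacle to be the propagation of closeness in the cover in the genuinely multivalued case: because the values $\tilde{\vp}_j(\cdot)$ are lifted $R_\delta$-sets rather than points, verifying that the orbit-chosen lift $\tilde{x}_{i+1}^{(l)}$ is indeed the near one requires the $R_\delta$/acyclic structure together with the uniform modulus of continuity, and is the step that genuinely uses all the hypotheses. This is precisely the computation carried out in \cite[Proposition 5]{AL3}, whose argument adapts verbatim to the present nonautonomous setting.
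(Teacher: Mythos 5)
The paper itself contains no proof of Proposition~\ref{p:4.1}: the result is imported from \cite[Proposition 5]{AL3} (hence the ``cf.'' in its statement), and the only ingredients supplied internally are Lemma~\ref{l:4.1} and Remark~\ref{r:4.1}, which you invoke correctly. Your outline --- reduce $h((r,q)_{0,\infty})$ to $h(\vp_{0,\infty})$, identify $|L(\vp_{n-1}\circ\ldots\circ\vp_0)|$ with a Nielsen number via an Anosov-type theorem on $\T^m$, and realize distinct essential classes as uniformly separated orbit segments --- is indeed the Ivanov-type scheme behind the cited source, so at the level of strategy you and the paper agree. But your closing sentence defers the decisive computation to \cite[Proposition 5]{AL3}, which is (up to notation) the very statement being proved; so your proposal stands or falls with the one mechanism you do supply, and that mechanism has a genuine gap.

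Concretely, the nearest-lift propagation fails in the multivalued setting. The lifted value $\tilde{\vp}_i(\tilde{x})$ is the image under $\tilde{q}_i$ of an $R_\delta$-fiber, hence connected, but it may have diameter $\geq 1$ and therefore contain \emph{several} distinct lifts of one and the same point of $\T^m$: for instance, the admissible map $\vp\equiv S^1$ determined by $Z=S^1\times[0,1]$, $r(z,t)=z$, $q(z,t)=z+t \ (\mathrm{mod}\ 1)$ lifts to $\tilde{\vp}(\tilde{x})=[\tilde{x},\tilde{x}+1]$, which contains both $\tilde{x}$ and $\tilde{x}+1$. Hence, even if $p(x_i^{(k)},x_i^{(l)})\leq\ep$ for every $i$, the lift of the second orbit that is actually an orbit of the lifted maps need not be the sequence of nearest lifts --- it can change sheets at any step; upper semicontinuity gives only one-sided control, no uniform modulus rescues the nearest-lift choice, and your inequality $|v_k-v_l|\leq 2\ep$ does not follow. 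The correct argument (Jiang's path argument, adapted to coincidences) joins $x_i^{(k)}$ to $x_i^{(l)}$ by short geodesics and uses homotopy lifting along the Vietoris maps $r_j$ --- this is where the $R_\delta$-fibers and the equi-continuity hypotheses genuinely enter --- to show that $\ep$-shadowing orbits force Nielsen-equivalent classes. Relatedly, Nielsen theory for admissible maps counts essential classes of \emph{coincidences} of the pairs $(r_j,q_j)$, living in $Z$, not fixed points of $\vp$ in $\T^m$; distinct classes can lie over the same point $x\in\T^m$, so your claim of ``at least $N(\Phi_n)$ fixed points in pairwise distinct essential classes'' is not literally available, and the separation lemma must instead be phrased and proved for coincidence classes, showing that non-equivalence precludes $\ep$-close orbit traces.
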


\begin{remark}
Since, under the assumptions of Proposition~\ref{p:4.1}, the admissible maps $\vp_j:\T^m\to\KK(\T^m)$ can be approximated with an arbitrary accuracy by single-valued continuous maps $f_j:\T^m\to\T^m$, which are admissibly homotopic with $\vp_j$, i.e. $\vp_j\simeq f_j$, $j\in\N\cup\{0\}$, $j\in\N\cup\{0\}$, and subsequently
\begin{equation*}
L(\vp_{n-1}\circ\ldots\circ\vp_0) = L(f_{n-1}\circ\ldots \circ f_0), n\in\N,
\end{equation*}
the inequality \eqref{eq:3} can be rewritten into the form
\begin{align}\label{eq:4}
h((r,q)_{0,\infty}) \geq \log \max\left\{1,\limsup_{n\to\infty} \left| L(f_{n-1}\circ\ldots \circ f_0)\right|^{\frac1{n}}\right\},
\end{align}
which is easier for calculations.
\end{remark}

On the circle $S^1 = \R/\Z$ (i.e. for $m=1$), the inequality \eqref{eq:4} takes the following simple form.
\begin{corollary}\label{c:4.1}
For $m=1$, the inequality
\begin{align}\label{eq:5}
h((r,q)_{0,\infty}) &\geq \log \max\left\{1,\limsup_{n\to\infty} \left| 1 - \prod_{j=0}^{n-1} \deg(f_j)\right|^{\frac1{n}}\right\}\\
&= \log \max \left\{1,\limsup_{n\to\infty} \left| 1 - \prod_{j=0}^{n-1} [ \hat{f}_j(1)-\hat{f}_j(0)]\right|^{\frac1{n}}\right\}\nonumber
\end{align}
holds for the parametric topological entropy $h((r,q)_{0,\infty})$ on the circle $S^1=\R/\Z$, where $\deg(f_j) = \hat{f}_j(1)-\hat{f}_j(0)$ denotes the topological degree of $f_j$ on $S^1$ and $\hat{f}_j$ are the lifts of the single-valued continuous approximations of $f_j$, $j\in\N\cup\{0\}$.
\end{corollary}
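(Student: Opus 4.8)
The plan is to reduce everything to the already-established inequality \Eq{4} from the preceding Remark, so that the only genuine task is to evaluate the Lefschetz number of a self-map of the circle $S^1=\R/\Z$ explicitly. Since $S^1$ is a compact polyhedron (in particular an ANR), its \v{C}ech homology with rational coefficients coincides with the usual simplicial/singular homology, namely $H_0(S^1)\cong H_1(S^1)\cong\Q$ and $H_k(S^1)=0$ for $k\geq 2$. Thus the Lefschetz number of any continuous $g\colon S^1\to S^1$ collapses to an alternating sum of just two traces, and the corollary becomes a direct computation.

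First I would record the homological description of a self-map $g\colon S^1\to S^1$. On $H_0$ the induced map is the identity (there is a single path-component), so its trace equals $1$; on $H_1\cong\Q$ the induced map is multiplication by the topological degree $\deg(g)$, so its trace equals $\deg(g)$. Consequently $L(g)=\operatorname{tr}(g_{*,0})-\operatorname{tr}(g_{*,1})=1-\deg(g)$, which is the single fact driving the whole statement.

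Next I would apply this with $g=f_{n-1}\circ\ldots\circ f_0$, where the $f_j\colon S^1\to S^1$ are the single-valued continuous approximations furnished by the preceding Remark (admissibly homotopic to $\vp_j$, with equal Lefschetz numbers). Using multiplicativity of degree under composition, $\deg(f_{n-1}\circ\ldots\circ f_0)=\prod_{j=0}^{n-1}\deg(f_j)$, whence $L(f_{n-1}\circ\ldots\circ f_0)=1-\prod_{j=0}^{n-1}\deg(f_j)$. Substituting this into \Eq{4} and taking $\limsup_{n\to\infty}|\cdot|^{1/n}$ yields the first line of \Eq{5}.

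Finally, to obtain the second expression in \Eq{5}, I would invoke the standard lift characterization of degree: if $\hat{f}_j\colon\R\to\R$ is a lift of $f_j$ along the universal covering $\R\to S^1$, then $\deg(f_j)=\hat{f}_j(x+1)-\hat{f}_j(x)$ for every $x\in\R$, and in particular $\deg(f_j)=\hat{f}_j(1)-\hat{f}_j(0)$; replacing each factor accordingly gives the second equality. I do not anticipate a serious obstacle here, as the argument is an elementary homological computation on $S^1$; the only point demanding a word of care is the passage from \v{C}ech to singular homology used to apply the classical Lefschetz formula, which is legitimate precisely because $S^1$ is an ANR.
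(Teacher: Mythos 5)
Your proposal is correct and follows exactly the route the paper intends: the corollary is stated as an immediate specialization of inequality \eqref{eq:4}, using $L(g)=1-\deg(g)$ for a self-map $g$ of $S^1$ (from $H_0(S^1)\cong H_1(S^1)\cong\Q$), multiplicativity of degree under composition, and the lift formula $\deg(f_j)=\hat{f}_j(1)-\hat{f}_j(0)$. The paper gives no separate written proof, and your computation supplies precisely the standard argument it relies on.
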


\begin{remark}
If $\vp = (r,q) = (r_j,q_j)$, for all $j\in\N\cup\{0\}$, and the single-valued continuous approximation $f\simeq\vp$ has degree $d$, i.e. $\deg(f) = \hat{f}(1) -\hat{f}(0) = d$, then the inequality \eqref{eq:5} reduces to
\begin{equation*}
h((r,q)) \geq \log \max\{1,|d|\} = \log\max \{1,|\hat{f}(1)-\hat{f}(0)|\}.
\end{equation*}
\end{remark}

Staying in dimension one for $X=[0,1]$, we can apply the result in \cite[Corollary 5.5]{AM2} as follows.

\begin{prop}\label{p:4.2}
Let $q_j,r_j:[0,1]\to[0,1]$, $j=0,1,\ldots k-1$, and $\tilde{q}, \tilde{r}:[0,1]\to[0,1]$ be single-valued continuous surjections such that
\begin{align}\label{eq:6}
(q_{k-1}\circ r^{-1}_{k-1})\circ \ldots \circ (q_0\circ r^{-1}_0) \subset \tilde{q} \circ \tilde{r}^{-1} \mbox{ on $[0,1]$,}
\end{align}
or
\begin{align}\label{eq:7}
(q_{k-1}\circ r^{-1}_{k-1})\circ \ldots \circ (q_0\circ r^{-1}_0) \supset \tilde{q} \circ \tilde{r}^{-1} \mbox{ on $[0,1]$.}
\end{align}

If $\tilde{q}$, $\tilde{r}$ are piecewise monotone and strongly commutative, i.e. $\tilde{q}\circ\tilde{r}^{-1} = \tilde{r}^{-1} \circ \tilde{q}$, then the inequalities
\begin{align}\label{eq:8}
h((q_{k-1}\circ r^{-1}_{k-1})\circ \ldots \circ (q_0\circ r^{-1}_0)) \leq \max\{h(\tilde{q}),h(\tilde{r})\},
\end{align}
resp.
\begin{align}\label{eq:9}
h((q_{k-1}\circ r^{-1}_{k-1})\circ \ldots \circ (q_0\circ r^{-1}_0)) \geq \max\{h(\tilde{q}),h(\tilde{r})\},
\end{align}
hold for the (nonparametric case of) topological entropy $h$ in the sense of Definition~\ref{d:4.1}.

If the sequence $(r,q)_{0,\infty} = \{q_j\circ r_j^{-1}\}_{j=0}^{\infty}$ consists of periodically repeated selected pairs with period $k$, i.e. $(r,q)_{0,\infty}=\{\overline{q_j\circ r^{-1}_j}\}_{j=0}^{k-1}$, where $q_{nk+j}=q_j$ and $r_{nk+j}=r_j$, for $j=0,1,\ldots, k-1$, $n\in\N\cup\{0\}$, then still

\begin{align}\label{eq:10}
h((r,q)_{0,\infty}) \leq \max\{h(\tilde{q}),h(\tilde{r})\} \mbox{ holds (under \eqref{eq:6})},
\end{align}
resp.
\begin{align}\label{eq:11}
h((r,q)_{0,\infty} \geq \frac1{k}\max\{h(\tilde{q}),h(\tilde{r})\} \mbox{ holds (under \eqref{eq:7})},
\end{align}
for the parametric topological entropy $h((r,q)_{0,\infty})$ of $(r,q)_{0,\infty}$ in the sense of Definition~\ref{d:4.1}.
\end{prop}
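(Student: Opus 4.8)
The plan is to derive the four inequalities \eqref{eq:8}--\eqref{eq:11} from the single-valued result \cite[Corollary 5.5]{AM2} together with the monotonicity properties of $h$ already established in Section~\ref{s:3}, specifically Remark~\ref{r:3.4} (monotonicity under inclusion of orbits) and Theorem~\ref{t:3.2} (the iterate estimate). The cited corollary concerns the strongly commutative single-valued pair $(\tilde r,\tilde q)$ and should give, for the composed multivalued map $\tilde q\circ\tilde r^{-1}$ on $[0,1]$, the two-sided control $h(\tilde q\circ\tilde r^{-1})=\max\{h(\tilde q),h(\tilde r)\}$ in the autonomous (nonparametric) sense; this identity is what I would quote to handle the composite map on the right-hand sides of \eqref{eq:6}--\eqref{eq:7}.

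First I would treat \eqref{eq:8} and \eqref{eq:9}, the genuinely autonomous inequalities. Under the inclusion hypothesis \eqref{eq:6}, the composite map $(q_{k-1}\circ r_{k-1}^{-1})\circ\dots\circ(q_0\circ r_0^{-1})$ is a submap of $\tilde q\circ\tilde r^{-1}$, so its orbit sets are contained in the orbit sets of $\tilde q\circ\tilde r^{-1}$; by the orbit-inclusion remark in Remark~\ref{r:3.4} this yields $h(\text{composite})\le h(\tilde q\circ\tilde r^{-1})=\max\{h(\tilde q),h(\tilde r)\}$, which is exactly \eqref{eq:8}. The reverse inclusion \eqref{eq:7} reverses the orbit inclusion and hence the entropy inequality, giving \eqref{eq:9}. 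Here I am using that $h$ applied to a single multivalued map (the $k$-fold composite, regarded as a constant autonomous sequence) is the nonparametric entropy of Definition~\ref{d:4.1}, and that the cited corollary computes the entropy of the right-hand commutative composite.

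Next I would pass to the parametric, periodically-repeated case \eqref{eq:10}--\eqref{eq:11}. The natural device is the $k$-th iterate $\vp_{0,\infty}^{[k]}$ of $\vp_{0,\infty}=\{q_j\circ r_j^{-1}\}$: by periodicity with period $k$, each block $\vp_{jk}^{[k]}=\vp_{jk+k-1}\circ\dots\circ\vp_{jk}$ equals the \emph{same} composite $(q_{k-1}\circ r_{k-1}^{-1})\circ\dots\circ(q_0\circ r_0^{-1})$, so $\vp_{0,\infty}^{[k]}$ is the constant autonomous sequence of that composite and $h(\vp_{0,\infty}^{[k]})=h(\text{composite})$. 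Theorem~\ref{t:3.2} then supplies the two-sided comparison $h(\vp_{0,\infty})\le h(\vp_{0,\infty}^{[k]})\le k\cdot h(\vp_{0,\infty})$. Combining the left inequality with \eqref{eq:8} (valid under \eqref{eq:6}) gives $h((r,q)_{0,\infty})=h(\vp_{0,\infty})\le h(\vp_{0,\infty}^{[k]})\le\max\{h(\tilde q),h(\tilde r)\}$, which is \eqref{eq:10}; combining the right inequality with \eqref{eq:9} (valid under \eqref{eq:7}) gives $k\cdot h(\vp_{0,\infty})\ge h(\vp_{0,\infty}^{[k]})\ge\max\{h(\tilde q),h(\tilde r)\}$, hence the factor $\frac1k$ in \eqref{eq:11}. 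Throughout I would invoke Remark~\ref{r:4.1}, so that the entropy $h((r,q)_{0,\infty})$ on coincidence orbits may be freely identified with $h(\vp_{0,\infty})$ on ordinary orbits.

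The main obstacle I anticipate is bookkeeping the precise hypotheses of \cite[Corollary 5.5]{AM2}—in particular verifying that the strong-commutativity condition $\tilde q\circ\tilde r^{-1}=\tilde r^{-1}\circ\tilde q$ and piecewise monotonicity are exactly what is needed to evaluate $h(\tilde q\circ\tilde r^{-1})=\max\{h(\tilde q),h(\tilde r)\}$, and confirming that the cited result is stated for the composite multivalued map rather than only for the individual maps. A secondary subtlety is the asymmetry of the factor $\frac1k$: it arises solely from which side of the iterate sandwich in Theorem~\ref{t:3.2} one is forced to use, and one should check that no sharper constant is available from the periodic structure (none is, in general, without further monotonicity of the partial composites). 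Once the cited corollary is correctly invoked, the remaining steps are routine applications of the already-proved orbit-inclusion monotonicity and the iterate theorem.
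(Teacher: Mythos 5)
Your proposal is correct and follows essentially the same route as the paper's own proof: Remark~\ref{r:3.4} (inclusion monotonicity) together with Remark~\ref{r:4.1} for the autonomous inequalities \eqref{eq:8}--\eqref{eq:9}, the identity $h(\tilde{q}\circ\tilde{r}^{-1})=\max\{h(\tilde{q}),h(\tilde{r})\}$ from \cite[Corollary 5.5]{AM2}, and the iterate sandwich of Theorem~\ref{t:3.2} applied to the period-$k$ sequence for \eqref{eq:10}--\eqref{eq:11}, including the same source of the $\frac1{k}$ factor. No gaps; your bookkeeping of which side of the sandwich produces which inequality matches the paper exactly.
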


\begin{proof}
According to the (nonparametric versions of) Remarks~\ref{r:3.4} and \ref{r:4.1}, we have

\begin{align*}
h((q_{k-1}\circ r^{-1}_{k-1}\circ\ldots\circ(q_0\circ r_0^{-1}))\leq h(\tilde{q}\circ \tilde{r}^{-1}) \mbox{, for \eqref{eq:6}},
\end{align*}
resp.
\begin{align*}
h((q_{k-1}\circ r^{-1}_{k-1}\circ\ldots\circ(q_0\circ r_0^{-1}))\geq h(\tilde{q}\circ \tilde{r}^{-1}) \mbox{, for  \eqref{eq:7}}.
\end{align*}

Since, under the assumptions imposed on $\tilde{q}$, $\tilde{r}$, the equality
\begin{align}\label{eq:12}
h(\tilde{q}\circ\tilde{r}^{-1}) = \max\{h(\tilde{q}),h(\tilde{r})\}
\end{align}
is satisfied by means of \cite[Corollary 5.5]{AM2}, we arrive at the inequalities \eqref{eq:8}, resp. \eqref{eq:9}.

If the sequence $(r,q)_{0,\infty}$ consists of periodically repeated selected pairs $(r_j,q_j)$, $j\in\{0,1,\ldots,k-1\}$ with period $k$, then, according to Theorem~\ref{t:3.2} (cf. also \eqref{eq:2}), we obtain
\begin{align*}
h((r,q)_{0,\infty}) &\leq h((r,q)^{[k]}_{0,\infty}) = h((q_{k-1}\circ r^{-1}_{k-1})\circ \ldots \circ (q_0\circ r^{-1}_0))\\
&\leq \max\{h(\tilde{q}),h(\tilde{r})\}, \mbox{ for \eqref{eq:6} (i.e. \eqref{eq:10}),}
\end{align*}
resp.
\begin{align*}
h((r,q)_{0,\infty}) &\geq \frac1{k} h((r,q)^{[k]}_{0,\infty}) = \frac1{k} h((q_{k-1}\circ r^{-1}_{k-1})\circ \ldots \circ (q_0\circ r^{-1}_0))\\
&\geq \frac1{k}\max\{h(\tilde{q}),h(\tilde{r})\}, \mbox{ for \eqref{eq:7} (i.e. \eqref{eq:11}).}
\end{align*}

\end{proof}

Observe that, for $k=1$, we have $q_0=\tilde{q}$, $r_0=\tilde{r}$, which refers trivially to the original result \eqref{eq:12} in \cite{AM2}. Moreover, $h((q_0\circ r_0^{-1})^n) = n \cdot h(q_0\circ r^{-1}_0) = n \cdot \max\{h(q_0),h(r_0)\}$, $n\in\N$ (cf. \cite[Corollary 5.6]{AM1}).

For $q_j = \id\restr{[0,1]}$, $j=0,1,\ldots,k-1$, we can take $\tilde{q}=\id\restr{[0,1]}$, $(r_0\circ\ldots\circ r_{k-1})=\tilde{r}$. Since $r_j$, $j=0,1,\ldots,k-1$, are by the hypothesis piecewise monotone surjections, so must be then $\tilde{r}$, and obviously $\id\restr{[0,1]} \circ\tilde{r}^{-1} = \tilde{r}^{-1} \circ \id\restr{[0,1]}$. Thus, in view of \cite[Corollary~5.5]{AM2}, we get
\begin{align*}
h(r^{-1}_{k-1}\circ\ldots \circ r^{-1}_0) &= h((r_0\circ \ldots r_{k-1})^{-1}) = \max\{h(r_0\circ\ldots\circ r_{k-1}), h(\id\restr{[0,1]})\}\\
 &= h(r_0\circ \ldots\circ r_{k-1}),
\end{align*}
which improves both \eqref{eq:8} and \eqref{eq:9}. On the other hand, it is only a particular case of the result in \cite[Corollary 3.6]{KT}, which holds for general u.s.c. surjections.

For $k>1$ and $q_j\neq\id\restr{[0,1]}$, for at least some $j\in\{0,\ldots,k-1\}$, we can give the following illustrative example inspired by \cite[Proposition 3.3]{AM1}, dealing with symmetric tent maps.

\begin{example}
In accordance with \cite{AM1}, we define for $k\geq 2$ the \emph{symmetric $k$-tent map} $T_k:[0,1]\to[0,1]$ as a map with critical points $\{\frac{i}{k}: 0\leq i \leq k\}$ such that $T_k(\frac{i}{k}) = 0$, for even $i$, and $T_k(\frac{i}{k}) = 1$, for odd $i$, and such that $T_k\restr{[\frac{i}{k},\frac{i+1}{k}]}$ is linear, for every $i\in\{0,\ldots,n-1\}$.

These maps satisfy the following relations:
\begin{align*}
&T_k\circ T_l = T_l \circ T_k, \mbox{ for $k,l\geq 2$, (commutativity)},\\
&T_k \circ T_l = T_{kl}, \mbox{ for $k,l\geq 2$, (multiplicity)},\\
&T_k^{-1} \circ T_k \supset \id\restr{[0,1]}, T_k\circ T^{-1}_k = \id\restr{[0,1]}, \mbox{ for $k\geq 2$ (see e.g. \cite[Proposition I.3.1]{AG}).}
\end{align*}

Furthermore, $T_k$ and $T_l$, $k,l\geq 2$, strongly commute, i.e. $T_k\circ T_l^{-1}=T^{-1}_l \circ T_k$, $k,l\geq 2$, if and only if $k$ and $l$ are relatively prime (see \cite[Proposition~3.3]{AM1}).

Now taking, for instance, $k=2$ and $q_1=T_3$, $q_0=T_4$, $r_1=T_2$, $r_0=T_7$, the composition on the left-hand side of \eqref{eq:7} takes the form
\begin{align*}
(T_3\circ T_2^{-1}) \circ (T_4\circ T_7^{-1}) = T_3\circ T^{-1}_2 \circ T_2^2\circ T^{-1}_7 \supset T_3\circ \id\restr{[0,1]} \circ T_2\circ T^{-1}_7 = T_6\circ T^{-1}_7
\end{align*}
with $\tilde{q}=T_6$, $\tilde{r}=T_7$.

Hence, applying Proposition~\ref{p:4.2}, we get the inequality \eqref{eq:9} as
\begin{equation*}
h((T_3\circ T_2^{-1})\circ(T_4\circ T_7^{-1}))\geq \max\{h(T_6),h(T_7)\} = h(T_7) = \log 7.
\end{equation*}

For the inequality \eqref{eq:8}, it is enough to take, for instance, $k=2$ and $q_1=T_4$, $q_0=T_4$, $r_1=T_2$, $r_0=T_7$, because then we obtain the inclusion \eqref{eq:6} in the form
\begin{align*}
(T_4\circ T_2^{-1})\circ(T_4\circ T_7^{-1}) = T_2\circ\id\restr{[0,1]}\circ T_4\circ T_7^{-1} = T_8\circ T_7^{-1}
\end{align*}	
with $\tilde{q}=T_8$, $\tilde{r}=T_7$. Subsequently, by means of \eqref{eq:8}, we get
\begin{equation*}
h((T_4\circ T_2^{-1})\circ(T_4\circ T_7^{-1}))= \max\{h(T_8),h(T_7)\} = h(T_8) = \log 8.
\end{equation*}

Now, if the sequence $(r,q)_{0,\infty}$ consists of $2$-periodically repeated (i.e. $k=2$) selected pairs $(T_2,T_3)$, $(T_7,T_4)$, then we obtain by \eqref{eq:11} the lower estimate (cf. \eqref{eq:9})
\begin{align*}
h((r,q)_{0,\infty})\geq \frac12 h((T_3\circ T_2^{-1})\circ(T_4\circ T_7^{-1})) \geq \frac12 h(T_7) = \frac12 \log 7 = \log \sqrt{7}.
\end{align*}

If the sequence $(r,q)_{0,\infty}$ consists of $2$-periodically repeated (i.e. $k=2$) selected pairs $(T_2,T_4)$, $(T_7,T_4)$, then we obtain by \eqref{eq:10} the upper estimate (cf. \eqref{eq:8})
\begin{align*}
h((r,q)_{0,\infty})\leq  h((T_4\circ T_2^{-1})\circ(T_4\circ T_7^{-1})) = h(T_8) = \log 8.
\end{align*}

If, in particular, $k$ and $l$ with $k\geq l\geq 2$ are relatively prime integers, then $h((T_k\circ T_l^{-1})^n)=n \cdot h(T_k\circ T_l^{-1}) = n \cdot \max\{ h(T_k),h(T_l)\} = n \cdot h(T_k) = n \log k,$ for every $n\in\N$ (cf. \cite[Corollary 5.6]{AM1}).

\end{example}

Coming back to the scheme
\begin{equation*}
X\overset{r_j}{\leftarrow}Z\overset{q_j}{\rightarrow} X, \, j\in\N\cup\{0\},
\end{equation*}
but this time with $q_j=\id\restr{Z}$, $j\in\N\cup\{0\}$, requires to put $Z=X$, which leads to the parametric preimage entropy $h((r,\id\restr{X})_{0,\infty})$ of the sequence $(r,\id\restr{X})_{0,\infty}$ or, equivalently, to the parametric topological entropy $h(r^{-1}_{0,\infty})$ of the sequence $r^{-1}_{0,\infty} = \{r^{-1}_j\}_{j=0}^{\infty}$ of multivalued maps $r^{-1}_j:X\multimap X$, $j\in\N\cup\{0\}$, (as a particular case of our former study in the foregoing Section~\ref{s:3}), provided $r_j:X\to X$, $j\in\N\cup\{0\}$, are surjections on a compact Hausdorff space $X$.

Furthermore, observe that since then
\begin{equation*}
\orbn(r^{-1}_{0,\infty}) = \{x_0\in X\} \oplus \coin{n-1}((r,\id\restr{X})_{0,\infty}), n\geq 2,
\end{equation*}
where
\begin{align*}
&\orbn(r^{-1}_{0,\infty}) = \{(x_0,x_1,\ldots,x_{n-1}) \in X^n: r_i^{-1}(x_i)=x_{i+1}, i=0,1,\ldots,n-1\},\\
&\coin{n-1}((r,\id\restr{X})_{0,\infty}) = \{(x_1,\ldots,x_{n-1})\in X^{n-1}: r_{i-1}(x_i)=x_{i-1}, i=1,\ldots,n-1\},
\end{align*}
the equality (cf. \eqref{eq:1})
\begin{equation*}
s(r^{-1}_{0,\infty},p,\ep,n) = s((r,\id\restr{X})_{0,\infty},p,\ep,n-1), \, n\geq 2,
\end{equation*}
holds for the maximal cardinalities of $(p,\ep,n)$-separated $n$-orbits of $r^{-1}_{0,\infty}$ and $(p,\ep,n-1)$-separated $(n-1)$-orbits of coincidences of $(r,\id\restr{X})_{0,\infty}$.

Hence, the equality
\begin{align}\label{eq:13}
h(r^{-1}_{0,\infty}) = h((r,\id\restr{X})_{0,\infty})
\end{align}
holds for the topological entropies in the sense of Definitions~\ref{d:3.8} and~\ref{d:4.1}, as claimed.

Since the multivalued maps $r^{-1}_j$, $j\in\N\cup\{0\}$, in the sequence $r^{-1}_{0,\infty}$ are the inversions of single-valued maps $r_j$, $j\in\N\cup\{0\}$, it can allow us to detect some properties of the preimage entropy $h(r^{-1}_{0,\infty})$ by means of $h(r_{0,\infty})$.

\begin{lemma}\label{l:4.2}
Let $X$ be a compact metric space and let the system $r_{0,\infty}$ consist of periodically repeated pairs of single-valued continuous surjections, i.e. $r_{0,\infty} = (\overline{r_0,r_1})$, where $r_j=r_{2n+j}$, for $j=0,1$, $n\in\N\cup\{0\}$. Then
\begin{align}\label{eq:14}
h(r_{0,\infty}) \leq h(r^{-1}_{0,\infty}) \leq 2h(r_{0,\infty}),
\end{align}
holds for the topological entropies $h(r_{0,\infty})$ of $r_{0,\infty}$ and $h(r^{-1}_{0,\infty})$ of $r^{-1}_{0,\infty}$ in the sense of Definition~\ref{d:3.8}.
\end{lemma}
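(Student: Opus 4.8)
The plan is to compare the two families of orbits directly through a reversal of coordinates, taking advantage of the $2$-periodicity. First I would record the elementary description of the orbit spaces: since each $r_j$ is single valued, a forward orbit $(x_0,\dots,x_{n-1})\in\orbn(r_{0,\infty})$ is uniquely determined by its first coordinate $x_0$, whereas a preimage orbit $(x_0,\dots,x_{n-1})\in\orbn(r^{-1}_{0,\infty})$ is uniquely determined by its last coordinate $x_{n-1}$, because the relation $x_{i+1}\in r_i^{-1}(x_i)$ means exactly $x_i=r_i(x_{i+1})$. Reversing the order of the coordinates, $(x_0,\dots,x_{n-1})\mapsto(x_{n-1},\dots,x_0)$, turns a preimage orbit into a tuple $(y_0,\dots,y_{n-1})$ satisfying $y_{k+1}=r_{n-2-k}(y_k)$; when $n$ is even the $2$-periodicity forces $r_{n-2-k}=r_k$, so the reversed tuple is precisely a forward orbit of $r_{0,\infty}$. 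As the pseudometric $p_n$ of Definition~\ref{d:3.2} is invariant under permuting coordinates, this reversal is a bijection between $\orbn(r^{-1}_{0,\infty})$ and $\orbn(r_{0,\infty})$ that preserves $p_n$; hence Lemma~\ref{l:sep} gives, for every even $n$,
\[
s(r^{-1}_{0,\infty},p,\ep,n)=s(r_{0,\infty},p,\ep,n).
\]

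Next I would show that both $n\mapsto s(r_{0,\infty},p,\ep,n)$ and $n\mapsto s(r^{-1}_{0,\infty},p,\ep,n)$ are non-decreasing. Passing from level $n$ to level $n+1$ only enlarges the maximum defining $p_n$, so separation can only be preserved; for the forward system each $n$-orbit extends (uniquely, by $x_n=r_{n-1}(x_{n-1})$) to an $(n+1)$-orbit, and for the preimage system it extends as well, now using the surjectivity of $r_{n-1}$ to pick $x_n\in r_{n-1}^{-1}(x_{n-1})$. For any non-decreasing sequence $c_n\ge1$ the inequality $\frac{1}{2m-1}\log c_{2m-1}\le\frac{2m}{2m-1}\,\frac{1}{2m}\log c_{2m}$ together with $\tfrac{2m}{2m-1}\to1$ shows that $\limsup_{n\to\infty}\frac1n\log c_n=\limsup_{n\ \mathrm{even}}\frac1n\log c_n$. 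Combining this with the even-$n$ equality above, the inner $\limsup$'s of the two systems coincide for every fixed $p\in\U$ and $\ep>0$, and taking the supremum over $p$ and $\ep$ yields in fact $h(r_{0,\infty})=h(r^{-1}_{0,\infty})$, which contains both inequalities of \eqref{eq:14}.

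For robustness I would also note that the upper estimate $h(r^{-1}_{0,\infty})\le 2h(r_{0,\infty})$ can be obtained without the monotonicity of the preimage counts, directly from the iterate inequality of Theorem~\ref{t:3.2} with $k=2$: the $2$-periodicity makes both second iterates autonomous, the one of $r_{0,\infty}$ being the constant map $r_1\circ r_0$ and the one of $r^{-1}_{0,\infty}$ the constant multivalued map $(r_0\circ r_1)^{-1}$, so that
\[
h(r^{-1}_{0,\infty})\le h\big((r_0\circ r_1)^{-1}\big)=h(r_0\circ r_1)=h(r_1\circ r_0)\le 2\,h(r_{0,\infty}),
\]
where the first equality is $h(f^{-1})=h(f)$ for continuous surjections (\cite[Corollary 3.6]{KT}) and the second is the standard identity $h(f\circ g)=h(g\circ f)$. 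The only genuinely delicate point — and the main obstacle — is the parity bookkeeping of the first step: one must verify that for even $n$ the reversed preimage orbit is a forward orbit of the \emph{same} (unshifted) sequence, and then legitimately transfer an identity that holds only along the even subsequence to the full $\limsup$, which is exactly what the monotonicity of the separated counts secures.
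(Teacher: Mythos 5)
Your proof is correct, and it takes a genuinely different route from the paper's. The paper reduces both systems to their autonomous second iterates via Theorem~\ref{t:3.2} with $k=2$ --- the two-sided bound in that theorem, applied to the multivalued preimage system, is precisely the source of the factor $2$ in \eqref{eq:14} --- and then identifies
$h(r_1^{-1}\circ r_0^{-1}) = h((r_0\circ r_1)^{-1}) = h(r_0\circ r_1) = h(r_1\circ r_0) = h((r_{0,\infty})^{[2]}) = 2h(r_{0,\infty})$
by quoting \cite[Corollary 3.6]{KT}, \cite[Theorem A]{KS} and \cite[Lemma 4.3]{KS}; your ``robustness'' paragraph is essentially this argument. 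Your main argument, by contrast, is self-contained: the reversal $(x_0,\dots,x_{n-1})\mapsto(x_{n-1},\dots,x_0)$ is indeed, for even $n$, a $p_n$-preserving bijection between $\orbn(r^{-1}_{0,\infty})$ and $\orbn(r_{0,\infty})$ (the parity computation $r_{n-2-k}=r_k$ is exactly where $2$-periodicity enters), Lemma~\ref{l:sep} then equates the separated counts along even $n$, and the monotonicity of both counts --- valid because forward orbits extend by applying $r_{n-1}$ and preimage orbits extend by surjectivity of $r_{n-1}$ --- legitimately transfers that identity to the full $\limsup$. What this buys is strictly more than \eqref{eq:14}: you obtain the equality $h(r^{-1}_{0,\infty})=h(r_{0,\infty})$, of which both stated inequalities are corollaries; in effect you have generalized the mechanism behind \cite[Corollary 3.6]{KT} (the autonomous case $r_0=r_1$, where no parity restriction arises) instead of invoking it as a black box, and you avoid the external Kolyada--Snoha results altogether. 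What the paper's route buys is flexibility at the price of sharpness: it needs no structural correspondence between backward and forward orbits, only the iterate inequalities, which is why it can only assert \eqref{eq:14}. Note finally that your method fails for $k$-periodic systems with $k>2$ for the same structural reason the paper's does (cf. the remark following the lemma): reversal then produces forward orbits of the reversed cycle $(\dots,r_1,r_0)$, which is no longer a shift of the original sequence, just as \cite[Theorem A]{KS} does not extend to more than two factors.
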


\begin{proof}
According to Theorem~\ref{t:3.2}, we get
\begin{align*}
\frac12 h(r^{-1}_1\circ r^{-1}_0) = \frac12 ((r^{-1}_{0,\infty})^{[2]}) \leq h(r^{-1}_{0,\infty}) \leq h((r^{-1}_{0,\infty})^{[2]}) = h(r^{-1}_1\circ r^{-1}_0).
\end{align*}

Since, by means of \cite[Corollary 3.6]{KT},
\begin{equation*}
h((r_0\circ r_1)^{-1}) = h(r_0\circ r_1),
\end{equation*}
by virtue of \cite[Theorem A]{KS},
\begin{equation*}
h(r_0\circ r_1) = h(r_1 \circ r_0) = h((r_{0,\infty})^{[2]})
\end{equation*}
and, in view of \cite[Lemma 4.3]{KS},
\begin{equation*}
h((r_{0,\infty})^{[2]}) = 2h(r_{0,\infty}),
\end{equation*}
we arrive at the equality \eqref{eq:13}.

\end{proof}

Thus, in particular, if $h(r_{0,\infty})=0$ or $h(r_{0,\infty})<\infty$ or $h(r_{0,\infty})=\infty$, then respectively $h(r^{-1}_{0,\infty})=0$ or $h(r^{-1}_{0,\infty})<\infty$ or $h(r^{-1}_{0,\infty})=\infty$.

\begin{remark}
Since we used in the proof of Lemma~\ref{l:4.2} the result in \cite[Theorem A]{KS}, it is impossible, according to \cite[Remark after Corollary 5.2]{KS} to consider in Lemma~\ref{l:4.2}, for $k>2$, periodically repeated $k$-tuples of single-valued surjections, i.e. $r_{0,\infty}=(\overline{r_0,\ldots,r_{n-1}})$, where $r_j=r_{kn+j}$, for $j=0,1,\ldots,k-1$, $n\in\N\cup\{0\}$, in order to satisfy \eqref{eq:13}.
\end{remark}

\begin{example}\label{e:4.2}
Let $r_{0,\infty}$ consist of periodically repeated pairs of single-valued continuous surjections $r_j:S^1\to S^1$ such that $\deg(r_j)=d_j$, $j=0,1$.

Assuming that (cf. \eqref{eq:5}) 
\begin{equation*}
|d_0 d_1|>1,
\end{equation*}
we obtain, in view of Corollary~\ref{c:4.1} and \eqref{eq:13} in Lemma~\ref{l:4.2}, that 
\begin{align*}
h(r^{-1}_{0,\infty}) &\geq h((r_{0,\infty}) \geq \log\left(\limsup_{n\to\infty} | 1 - \prod_{j=0}^{n-1} d_j|^{\frac1{n}}\right) = \log\max\{\sqrt{|d_0d_1|},|d_0|,|d_1|\}\\
&= \log \max\{ |d_0|,|d_1|\}>0.
\end{align*}

In view of \eqref{eq:2}, this result can be interpreted as the lower estimate of the preimage entropy $h((r,\id\restr{S^1})_{0,\infty})$, dealing with the sequence $r_{0,\infty}^{-1}$ of multivalued u.s.c. maps $r_j^{-1}:S^1\to \KK(S^1)$, $j\in\N\cup\{0\}$.

\end{example}

Nevertheless, if $\vp_{0,\infty}$ consists of multivalued maps which cannot be determined by single-valued maps, then for the lower and upper estimates of $h(\vp_{0,\infty})$ we need different types of tools. Some of them will be indicated in the next section.

\vspace{10pt}

This section will be concluded by another definition of parametric topological entropy on orbits of coincidences which does not coincide, in the single-valued case, with any standard definition in \cite{AKM, Bo, Di, KS} (see Example~\ref{e:4.3} below).

Hence, let $X=(X,\U)$ and $Z=(Z,\V)$ be compact uniform spaces and  $(r,q)_{0,\infty}$ be the sequence of selected pairs $(r_j,q_j)_{j=0}^{\infty}$, where $X\overset{r_j}{\leftarrow}Z\overset{q_j}{\rightarrow} X$, $j\in\N\cup\{0\}$, are single-valued maps such that $r_j$, $j\in\N\cup\{0\}$, are surjections.

Let for this time $p\in\V$, $n$ be a positive integer and $\ep>0$. We say that the subset $S\subset\coinn((r,q)_{0,\infty})$ is \emph{$(p,\ep,n)_Z$-separated for $(r,q)_{0,\infty}$}, resp. the subset $R\subset\coinn((r,q)_{0,\infty})$ is \emph{$(p,\ep,n)_Z$-spanning for for $(r,q)_{0,\infty}$}, if it is a $(p_n,\ep)$-separated subset of $\coinn((r,q)_{0,\infty})$, resp. a $(p_n,\ep)$-spanning set in $\coinn((r,q)_{0,\infty})$, in the sense of Definition~\ref{d:3.1}, where
\begin{align}\label{eq:15}
p_n((z_0,\ldots,z_{n-1}),(z'_0,\ldots,z'_{n-1})):= \max\{p(z_i,z'_i):i=0,\ldots,n-1\},
\end{align}
for $(z_0,\ldots,z_{n-1}),(z'_0,\ldots,z'_{n-1})\in Z^n$.

\begin{definition}\label{d:4.2}
The \emph{parametric topological entropy} on orbits of coincidences $h_Z((r,q)_{0,\infty})$ for a family $(r,q)_{0,\infty}$ of selected pairs $(r_j,q_j)_{j=0}^{\infty}$ takes the form
\begin{align*}
h_Z((r,q)_{0,\infty})&:= \sup_{\ep>0,p\in\U} \limsup_{n\to\infty}\frac1{n}\log s_Z((r,q)_{0,\infty},p,\ep,n) \\
&= \sup_{\ep>0,p\in\U} \limsup_{n\to\infty}\frac1{n}\log r_Z((r,q)_{0,\infty},p,\ep,n),
\end{align*}
where $s_Z((r,q)_{0,\infty},p,\ep,n)$ stands for the maximal cardinality of a $(p,\ep,n)_Z$-separated set for $(r,q)_{0,\infty}$, and $r_Z((r,q)_{0,\infty},p,\ep,n)$ for the minimal cardinality of a $(p,\ep,n)_Z$-spanning set for $(r,q)_{0,\infty}$.
\end{definition}

The existence of finite maximal and minimal cardinalities in Definition~\ref{d:4.2} is guaranteed by Lemma~\ref{l:3.2}, because $p_n\in\V^n$ (see the reasoning below Definition~\ref{d:3.2}). The equality of the separated and spanning variants arises from Lemma~\ref{l:sep-span} (cf. Definitions~\ref{d:hKT} and~\ref{d:4.1}).

One can readily check that the maximal cardinalities $s((r,q)_{0,\infty}, p,\ep,n)$ and $s_Z((r,q)_{0,\infty},p,\ep,n)$ in Definitions~\ref{d:4.1} and \ref{d:4.2} can significantly differ each to other. For instance, if $p$, $p_n$ in \eqref{eq:15} become metrics, then obviously the strict inequality
\begin{equation*}
s((r,q)_{0,\infty},p,\ep,n) < s_Z((r,q)_{0,\infty},p,\ep,n), n\in\N,
\end{equation*}
can occur like in the following simple illustrative example.

\begin{example}\label{e:4.3}
%Let $X=\{0\}$ be a singleton and $Z=\{z_0,z_1\}$, where $z_0,z_1\in \{ z\in \mathcal{C}([0,1]): z(0)=z(1)=0\}$, are two distinct single-valued continuous functions whose values vanish at $0$ and $1$. Assuming that $r_j(z_0)=r_j(z_1)=q_j(z_0) = q_j(z_1)=0$, $j\in\N\cup\{0\}$, and taking $p\in\V$, $p_n\in\V^n$, as the metrics
%\begin{equation*}
%p_n(u,v)=\max_{i=0,\ldots,n-1} p(u_i,v_i) = \max_{i=0,\ldots,n-1} \max_{t\in[0,1]} |u_i(t)-v_i(t)|,
%\end{equation*}
%where $u=(u_0,\ldots,u_{n-1})$, $v=(v_0,\ldots,v_{n-1})$, $u,v\in Z^n$, we have $\orbn((q\circ r^{-1})_{0,\infty}) = \{(0,\ldots,0)\}$, while $\coinn((r,q)_{0,\infty}) = \{(u_0,\ldots,u_{n-1}):u_i\in\{z_0,z_1\}, i=0,\ldots,n-1\}$.
%
%Thus, $s((r,q)_{0,\infty}),p,\ep,n)=1$, and so $h((r,q)_{0,\infty}) = 0$ in the sense of Definition~\ref{d:4.1}, but \pavel{OPRAVENO, ZKONTROLOVAT} $s_Z((r,q)_{0,\infty}),p,\ep,n)=2^n$, and so $h_Z((r,q)_{0,\infty})=\log 2$.

Let $X=\{0\}$ be a singleton and $Z=\{z_0,z_1\}$, where $z_0,z_1\in \{ z\in \mathcal{C}([0,1]): z(0)=z(1)=0\}$, are two distinct single-valued continuous functions whose values vanish at $0$ and $1$. Assuming that $r_j(z_0)=r_j(z_1)=q_j(z_0) = q_j(z_1)=0$, $j\in\N\cup\{0\}$, we have $\orbn((q\circ r^{-1})_{0,\infty}) = \{(0,\ldots,0)\}$, while $\coinn((r,q)_{0,\infty}) = \{(u_0,\ldots,u_{n-1}):u_i\in\{z_0,z_1\}, i=0,\ldots,n-1\}$. We consider the uniformity on $Z$ which is induced by the usual supremum metric on $\mathcal{C}([0,1])$. To compute $h((r,q)_{0,\infty})$ and $h_Z((r,q)_{0,\infty})$, it is enough to take this metric as $p$, and
\begin{equation*}
p_n(u,v)=\max_{i=0,\ldots,n-1} p(u_i,v_i) = \max_{i=0,\ldots,n-1} \max_{t\in[0,1]} |u_i(t)-v_i(t)|,
\end{equation*}
where $u=(u_0,\ldots,u_{n-1})$, $v=(v_0,\ldots,v_{n-1})$, $u,v\in Z^n$.

Thus, for a sufficiently small $\ep>0$, $s((r,q)_{0,\infty}),p,\ep,n)=1$, and so $h((r,q)_{0,\infty}) = 0$ in the sense of Definition~\ref{d:4.1}, but $s_Z((r,q)_{0,\infty}),p,\ep,n)=2^n$, and so $h_Z((r,q)_{0,\infty})=\log 2$.

Obviously, if $X=\{0\}$ and $Z = \{z\in \mathcal{C}([0,1]):z(0)=z(1)=0\}$, then $h((r,q)_{0,\infty})=0$, but $h_Z((r,q)_{0,\infty})=\infty$ (cf. \cite[Theorem 3]{RS}).

\end{example}

Observe that since in fact $\vp_{0,\infty}= (q\circ r^{-1})_{0,\infty} = \{0,\ldots\}$, i.e. the sequence of single-valued zero constants, Definition~\ref{d:4.2} has no standard single-valued analog in \cite{AKM,Bo,Di,KS}. 

Let us conclude that although Definition~\ref{d:4.1} is only a particular case of Definition~\ref{d:4.2} and, in view of Remark~\ref{r:4.1}, also of Definition~\ref{d:hsepspan}, it seems to be more suitable for applications, especially because of Proposition~\ref{p:4.1} (cf. also Example~\ref{e:4.2}). Moreover, it can serve (unlike Example~\ref{e:4.3}) as a tool for the lower estimate of the parametric topological entropies in the sense of Definitions~\ref{d:hsepspan} and~\ref{d:4.2}.

\section{Some further possibilities}

In this section, the pointwise entropies $\hp$, $\hm$ and the inverse image entropy $\hi$, introduced in 1995 by Hurley (cf. \cite{Hu}) for the inversions of single-valued continuous maps in compact metric spaces, will be parametrized and examined for arbitrary nonautonomous multivalued maps in compact uniform spaces.

Hence, let $X=(X,\U)$ be a compact uniform space and $\vp_j:X\multimap X$, $j\in\N\cup\{0\}$, be a sequence $\vp_{0,\infty}$ of multivalued maps. Defining, for a given $x\in X$,
\begin{equation*}
\orbn(\vp_{0,\infty},x):= \{(x_0,x_1,\ldots,x_{n-1})\in\orbn(\vp_{0,\infty}): x_0=x\},
\end{equation*}
we obviously have
\begin{align}\label{eq:16}
\orbn(\vp_{0,\infty}) = \bigcup_{x\in X} \orbn(\vp_{0,\infty},x)\mbox{ and } \orbn(\vp_{0,\infty},x)\subset \orbn(\vp_{0,\infty}),
\end{align}
for every $x\in X$, $n\in\N$.

We can call again (as in Section~\ref{s:3}) $S\subset \orbn(\vp_{0,\infty},x)$ to be \emph{$(p,\ep,n,x)$-separated} for $\vp_{0,\infty}$, resp. $R\subset\orbn(\vp_{0,\infty},x)$ to be \emph{$(p,\ep,n,x)$-spanning} for $\vp_{0,\infty}$, in the sense of Definition~\ref{d:3.2}, if it is a $(p_n,\ep)$-separated subset, resp. a $(p_n,\ep)$-spanning set in $\orbn(\vp_{0,\infty},x)$, in the sense of Definition~\ref{d:3.1}, where $p_n$ is defined by \eqref{eq:0}.

\begin{definition}\label{d:5.1}
Let $X=(X,\U)$ be a compact uniform space and $\vp_j:X\multimap X$, $j\in\N\cup\{0\}$, be a sequence $\vp_{0,\infty}$ of multivalued maps. Denoting, for a given $x\in X$, by $s(\vp_{0,\infty},p,\ep,n,x)$ the largest cardinality of a $(p,\ep,n,x)$-separated set for $\vp_{0,\infty}$, resp. by $r(\vp_{0,\infty},p,\ep,n,x)$ the smallest cardinality of a $(p,\ep,n,x)$-spanning set for $\vp_{0,\infty}$ (it is possible by virtue of Lemma~\ref{l:3.2}).

The \emph{parametric pointwise entropy} $\hp(\vp_{0,\infty})$ of $\vp_{0,\infty}$ is defined (cf. Definition~\ref{d:hKT} and \eqref{eq:16}) to be
\begin{align*}
\hp(\vp_{0,\infty}) &:= \sup_{x\in X} \sup_{p\in\U,\ep>0} \limsup_{n\to\infty} \frac1{n} \log s(\vp_{0,\infty},p,\ep,n,x)\\
&= \sup_{x\in X} \sup_{p\in\U,\ep>0} \limsup_{n\to\infty} \frac1{n} \log r(\vp_{0,\infty},p,\ep,n,x).
\end{align*}

The \emph{parametric pointwise entropy} $\hm(\vp_{0,\infty})$ of $\vp_{0,\infty}$ is defined to be
\begin{align*}
\hm(\vp_{0,\infty}) &:= \sup_{p\in\U,\ep>0} \limsup_{n\to\infty} \frac1{n} \log \sup_{x\in X} s(\vp_{0,\infty},p,\ep,n,x)\\
&=\sup_{p\in\U,\ep>0} \limsup_{n\to\infty} \frac1{n} \log \sup_{x\in X} r(\vp_{0,\infty},p,\ep,n,x).
\end{align*}

\end{definition}

\begin{remark}\label{r:5.2}
Definition~\ref{d:5.1} generalizes, besides others, its analogs in \cite[Definition 2.3]{WZZ}, for autonomous u.s.c. maps in compact metric spaces, in \cite{HWZ,ZZH}, for the inversions of single-valued continuous maps in compact metric spaces, and in \cite{Hu,WZ}, for the inversions of autonomous single-valued continuous maps in compact metric spaces.
\end{remark}

\begin{definition}\label{d:5.2}
Let $X$ be a compact topological space and $\vp_j:X\multimap X$, $j\in\N\cup\{0\}$, be a sequence $\vp_{0,\infty}$ of multivalued maps. Let us recall that, for an open cover $\A$ of $X$ and $n\in X$, $n\geq 1$,
\begin{equation*}
\A^n= \{ U_0\times \ldots \times U_{n-1}: U_0, \ldots,U_{n-1}\in\A\}.
\end{equation*}

Let, for a given $x\in X$, $N(\vp_{0,\infty}, \A, n,x)$ be a minimal cardinality of a subcover of $\A^n$ covering $\orbn(\vp_{0,\infty},x)$.

Then the \emph{parametric pointwise entropy} $\hp^{\operatorname{top}}(\vp_{0,\infty})$ of $\vp_{0,\infty}$ can be defined as
\begin{equation*}
\hp^{\operatorname{top}}(\vp_{0,\infty}):= \sup_{x\in X} \sup_{\A} \limsup_{n\to\infty} \frac1{n} \log N(\vp_{0,\infty},\A,n,x).
\end{equation*}

The \emph{parametric pointwise entropy} $h^{\operatorname{top}}_m(\vp_{0,\infty})$ of $\vp_{0,\infty}$ can be defined as
\begin{equation*}
\hm^{\operatorname{top}}(\vp_{0,\infty}):=  \sup_{\A} \limsup_{n\to\infty} \frac1{n} \log \sup_{x\in X} N(\vp_{0,\infty},\A,n,x).
\end{equation*}
\end{definition}

\begin{remark}\label{r:5.1}
The compact topological space $X$ in Definition~\ref{d:5.2} need not be uniform, like in Definition~\ref{d:5.1}. If it is a compact Hausdorff space, then Definitions~\ref{d:5.1} and \ref{d:5.2} are equivalent, in view of Theorem~\ref{t:KT=AK} and \eqref{eq:16}, i.e.
\begin{align}\label{eq:17}
\hp(\vp_{0,\infty}) = \hp^{\operatorname{top}}(\vp_{0,\infty}) \mbox{ and } \hm(\vp_{0,\infty}) = \hm^{\operatorname{top}}(\vp_{0,\infty}).
\end{align}
\end{remark}

\begin{remark}\label{r:5.3}
Definition~\ref{d:5.2} generalizes, besides others, its analog in \cite{HWZ}, for the inversions of single-valued continuous maps in compact topological spaces and, because of \eqref{eq:17}, also those mentioned in Remark~\ref{r:5.2}, for the inversions of single-valued continuous maps in compact metric spaces. For single-valued maps, Definition~\ref{d:5.1} and \ref{d:5.2} have obviously no meaning.
\end{remark}

Now, define the pseudometric $p_n^b$ on $X$ as
\begin{align*}
p^b_n(x,y)&:= \pnH(\orbn(\vp_{0,\infty},x),\orbn(\vp_{0,\infty},y))\\
&=\max\{\sup_{\overline{x}\in\orbn(\vp_{0,\infty},x)} \inf_{\overline{y}\in\orbn(\vp_{0,\infty},y)} p_n(\overline{x},\overline{y}),\\
&\sup_{\overline{y}\in\orbn(\vp_{0,\infty},y)} \inf_{\overline{x}\in\orbn(\vp_{0,\infty},x)} p_n(\overline{y},\overline{x})\},\nonumber
\end{align*}
where $p_n\in\U^n$ is defined by \eqref{eq:0} and $\pnH$ along the lines of Proposition~\ref{p:hyper2}.

%In order to prove that $p^b_n\in\U$, we will assume that $\vp_j:X\to\KK(X)$, $j\in\N\cup\{0\}$, is a sequence $\vp_{0,\infty}$ of multivalued continuous maps.

\begin{definition}\label{d:5.3}
Let $X=(X,\U)$ be a compact uniform space and $\vp_j:X\to\KK(X)$, $j\in\N\cup\{0\}$, be a sequence $\vp_{0,\infty}$ of multivalued maps. Denoting by $s(\vp_{0,\infty},p^b_n,\ep)$ the supremum of cardinalities of $(p^b_n,\ep)$-separated sets of $X$, resp. by $r(\vp_{0,\infty},p^b_n,\ep)$ the smallest cardinality of a $(p^b_n,\ep)$-spanning set in $X$, the \emph{parametric branch entropy} $\hi(\vp_{0,\infty})$ is defined to be (cf. Definition~\ref{d:hKT} and \eqref{eq:16})
\begin{align*}
\hi(\vp_{0,\infty})&:= \sup_{p\in\U,\ep>0} \limsup_{n\to\infty} \frac1{n} \log \fin(s(\vp_{0,\infty},p^b_n,\ep))\\
&= \sup_{p\in\U,\ep>0} \limsup_{n\to\infty} \frac1{n} \log \fin(r(\vp_{0,\infty},p^b_n,\ep)),
\end{align*}
where
\begin{equation*}
\fin(\kappa)=
\begin{cases}
&\kappa, \mbox{ if $\kappa$ is a finite cardinal number,}\\
&\infty, \mbox{ if $\kappa$ is an infinite cardinal number.}
\end{cases}
\end{equation*}
\end{definition}

\begin{remark}\label{r:5.4}
Let $(X,\U)$ be a compact uniform space and $\vp_j: X\to\KK(X)$, $j\in\N\cup\{0\}$, be a sequence $\vp_{0,\infty}$ of u.s.c. multivalued maps. Then we can omit the operator $\fin$ and use for the cardinalities the attribute maximal rather than supremal in Definition~\ref{d:5.3}. A natural way of doing it would be to show that $p_n^b\in\U$ and apply Lemma~\ref{l:3.2}. However, it is not clear whether $p_n^b$ is really an element of $\U$. 

Nevertheless, we can argue as follows. The space $(X^n,\U^n)$ is compact uniform and, by Proposition~\ref{p:hyper2}, $(\KK(X^n),(\U^n)^{\mathrm{H}})$ is also a compact uniform space.

Given $\ep>0$, $p\in\U$ and $n\in\N$, we can consider
\begin{equation*}
Y := \{ \orbn(\vp_{0,\infty},x): x\in X\},
\end{equation*}
which is a subset of $\KK(X^n)$ by Lemma~\ref{l:1}. Now, applying Lemma~\ref{l:3.2}, we can see that there exists a finite $(\pH_n,\ep)$-spanning set in $Y$ and also the cardinality of all $(\pH_n,\ep)$-separated subsets of $Y$ is uniformly bounded. Consequently, both the numbers $s(\vp_{0,\infty},p^b_n,\ep)$ and $r(\vp_{0,\infty},p^b_n,\ep)$ are finite.
\end{remark}

\begin{remark}\label{r:5.4b}
Definition~\ref{r:5.3} generalizes, besides others, its analogs in \cite[Definition 2.4]{WZZ}, for autonomous u.s.c. maps in compact metric spaces, in \cite{YWW,ZZH}, for the inversions of single-valued continuous maps in compact metric spaces and, in \cite{Hu,CN}, for the inversions of autonomous single-valued continuous maps in compact metric spaces. For single-valued maps, $h(\vp_{0,\infty})=\hi(\vp_{0,\infty})$ holds.
\end{remark}

The parametric entropies $\hp$, $\hm$, $\hi$ in Definitions~\ref{d:5.1}-\ref{d:5.3} are related to the parametric topological entropy $h$ in the sense of Definition~\ref{d:3.8} in the following way.

%\begin{theorem}\label{t:5.1}
%Let $X$ be a compact Hausdorff space and $\vp_{j}:X\multimap X$, $j\in\N\cup\{0\}$, be a sequence $\vp_{0,\infty}$ of multivalued maps. Then the inequalities
%\begin{align}\label{eq:18}
%\hp(\vp_{0,\infty}) \leq \hm(\vp_{0,\infty}) \leq h(\vp_{0,\infty})
%\end{align}
%hold for the parametric topological entropy $h$ in the sense of Definition~\ref{d:3.8} and the parametric entropies $\hp$, $\hm$ in the sense of Definitions~\ref{d:5.1}-\ref{d:5.2}, respectively.
%
%If in particular, $\vp_{0,\infty}$ is a sequence $\vp_{j}:X\to \KK(X)$, $j\in\N\cup\{0\}$, of multivalued continuous maps, then the inequalities
%\begin{align}\label{eq:18b}
%h(\vp_{0,\infty}) \leq \hm(\vp_{0,\infty}) + \hi(\vp_{0,\infty})
%\end{align}
%hold for the parametric topological entropy $h$ in the sense of Definition~\ref{d:3.8} and the parametric entropies $\hm$, $\hi$ in the sense of Definitions~\ref{d:5.2}-\ref{d:5.3}, respectively.
%\end{theorem}
\begin{theorem}\label{t:5.1}
Let $X$ be a compact Hausdorff space and $\vp_{j}:X\multimap X$, $j\in\N\cup\{0\}$, be a sequence $\vp_{0,\infty}$ of multivalued maps. Then the inequalities
\begin{align}\label{eq:18}
\hp(\vp_{0,\infty}) &\leq \hm(\vp_{0,\infty}) \leq h(\vp_{0,\infty}),\\ \label{eq:18b}
h(\vp_{0,\infty}) &\leq \hm(\vp_{0,\infty}) + \hi(\vp_{0,\infty})
\end{align}
hold for the parametric topological entropy $h$ in the sense of Definition~\ref{d:3.8} and the parametric entropies $\hp$, $\hm$, $\hi$, in the sense of Definitions~\ref{d:5.1}-\ref{d:5.3}, respectively.
\end{theorem}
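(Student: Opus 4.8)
The plan is to obtain \eqref{eq:18} by pure monotonicity and to put all the effort into \eqref{eq:18b}. For the chain \eqref{eq:18}, fix $p\in\U$ and $\ep>0$. Since $s(\vp_{0,\infty},p,\ep,n,x)\le\sup_{x'\in X}s(\vp_{0,\infty},p,\ep,n,x')$ for every $x\in X$ and every $n$, I would pass to $\frac1n\log(\cdot)$, take $\limsup_{n\to\infty}$, and then take $\sup_x$ on the left; as the right-hand side no longer depends on $x$, a subsequent $\sup_{p,\ep}$ yields $\hp(\vp_{0,\infty})\le\hm(\vp_{0,\infty})$. For $\hm\le h$ I would use the inclusion $\orbn(\vp_{0,\infty},x)\subset\orbn(\vp_{0,\infty})$ from \eqref{eq:16}: any $(p_n,\ep)$-separated subset of a fibre $\orbn(\vp_{0,\infty},x)$ is a fortiori a $(p_n,\ep)$-separated subset of $\orbn(\vp_{0,\infty})$, so $\sup_x s(\vp_{0,\infty},p,\ep,n,x)\le s(\vp_{0,\infty},p,\ep,n)$ for all $n$, and dividing by $n$, taking $\limsup$ and then $\sup_{p,\ep}$ gives $\hm(\vp_{0,\infty})\le\hsep(\vp_{0,\infty})=h(\vp_{0,\infty})$.

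The heart of the matter is \eqref{eq:18b}, which I would prove as a Hurley-type factorization of the total count into a ``base'' factor (governed by $\hi$) and a ``fibre'' factor (governed by $\hm$). Fix $p\in\U$, $\ep>0$ and $n\in\N$, and let $E\subset\orbn(\vp_{0,\infty})$ be a $(p_n,\ep)$-separated set with $\card E=s(\vp_{0,\infty},p,\ep,n)$. First I would span the base points: choose a $(p^b_n,\frac{\ep}4)$-spanning set $R\subset X$ of minimal cardinality $r(\vp_{0,\infty},p^b_n,\frac{\ep}4)$. For each $\overline x=(x_0,\ldots,x_{n-1})\in E$ I pick $y\in R$ with $p^b_n(x_0,y)\le\frac{\ep}4$. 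The defining $\sup$-$\inf$ term of $\pnH$ gives $\inf_{\overline w\in\orbn(\vp_{0,\infty},y)}p_n(\overline x,\overline w)\le p^b_n(x_0,y)\le\frac{\ep}4$, and since $\orbn(\vp_{0,\infty},y)$ is compact (Lemma~\ref{l:1}) the infimum is attained, so some $\overline w\in\orbn(\vp_{0,\infty},y)$ satisfies $p_n(\overline x,\overline w)\le\frac{\ep}4$. Secondly I would span inside each fibre: for every $y\in R$ fix a $(p_n,\frac{\ep}4)$-spanning set $F_y\subset\orbn(\vp_{0,\infty},y)$ of minimal cardinality $r(\vp_{0,\infty},p,\frac{\ep}4,n,y)$, and choose $\overline v\in F_y$ with $p_n(\overline w,\overline v)\le\frac{\ep}4$.

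This assigns to each $\overline x\in E$ a pair $(y,\overline v)$ with $y\in R$ and $\overline v\in F_y$, and I claim the assignment is injective. Indeed, if $\overline x,\overline x'\in E$ shared an image $(y,\overline v)$, then the two estimates above give $p_n(\overline x,\overline v)\le\frac{\ep}4+\frac{\ep}4=\frac{\ep}2$ and likewise $p_n(\overline x',\overline v)\le\frac{\ep}2$, so $p_n(\overline x,\overline x')\le\ep$, forcing $\overline x=\overline x'$ by the $(p_n,\ep)$-separation of $E$. Counting the target therefore yields
\begin{equation*}
s(\vp_{0,\infty},p,\ep,n)\ \le\ r(\vp_{0,\infty},p^b_n,\tfrac{\ep}4)\cdot\sup_{x\in X}r(\vp_{0,\infty},p,\tfrac{\ep}4,n,x).
\end{equation*}
Taking $\frac1n\log$, using $\limsup_n(a_n+b_n)\le\limsup_n a_n+\limsup_n b_n$, and recognizing the first term as bounded by $\hi(\vp_{0,\infty})$ (Definition~\ref{d:5.3}, with the fixed radius $\frac{\ep}4$) and the second by $\hm(\vp_{0,\infty})$ (the spanning variant of Definition~\ref{d:5.1}), I would obtain $\limsup_n\frac1n\log s(\vp_{0,\infty},p,\ep,n)\le\hm(\vp_{0,\infty})+\hi(\vp_{0,\infty})$; a final $\sup_{p,\ep}$ gives $h(\vp_{0,\infty})=\hsep(\vp_{0,\infty})\le\hm(\vp_{0,\infty})+\hi(\vp_{0,\infty})$.

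The hard part will be the clean coupling of the two spanning scales through $p^b_n$: one must guarantee that $p^b_n$-closeness of base points transfers to $p_n$-closeness of \emph{entire} orbits, which is exactly what the $\sup$-$\inf$ term of $\pnH$ delivers, and this step needs the fibres $\orbn(\vp_{0,\infty},y)$ to be compact so that the approximating orbit $\overline w$ genuinely exists — hence the u.s.c., compact-valued standing hypothesis underlying Definition~\ref{d:5.3} (cf. Lemma~\ref{l:1} and Remark~\ref{r:5.4}). The remaining care is purely in the error budget: splitting $\ep$ into quarter-steps of size $\frac{\ep}4$ so that the final triangle estimate lands at exactly $\ep$, and invoking the finiteness of all cardinalities (Remark~\ref{r:5.4}) so that the operator $\fin$ in Definition~\ref{d:5.3} is inert.
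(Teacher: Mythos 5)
Your strategy for \eqref{eq:18b} is sound and is essentially the dual of the paper's own argument: the paper assembles a maximal $(p^b_n,\frac{\ep}3)$-separated set of base points and maximal $(p_n,\frac{\ep}3)$-separated sets in the fibres into a $(p_n,\ep)$-spanning set of $\orbn(\vp_{0,\infty})$, thereby bounding the \emph{spanning} number of the whole orbit space by a product of \emph{separated} counts, whereas you inject a maximal separated set of the whole orbit space into (base spanning set) $\times$ (fibre spanning sets), bounding the \emph{separated} number by a product of \emph{spanning} counts. Since $h$, $\hm$ and $\hi$ each admit both separated and spanning characterizations, either direction of counting yields \eqref{eq:18b}; your treatment of \eqref{eq:18} coincides with the paper's (``straightforwardly from the definitions'').

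There is, however, a genuine gap relative to the theorem as stated. Theorem~\ref{t:5.1} concerns \emph{arbitrary} multivalued maps $\vp_j:X\multimap X$, but you invoke Lemma~\ref{l:1} to get compactness of the fibres $\orbn(\vp_{0,\infty},y)$ (so that the infimum inside $\pnH$ is attained) and Remark~\ref{r:5.4} for finiteness of the cardinalities; both require u.s.c. maps with compact values, and for arbitrary maps the orbit sets need not be closed, let alone compact. Your exact quarter-$\ep$ budget makes this reliance essential rather than cosmetic: without attainment you only get some $\overline{w}$ with $p_n(\overline{x},\overline{w})<\frac{\ep}4+\delta$, and the injectivity computation then gives $p_n(\overline{x},\overline{x}')\le\ep+2\delta$, which no longer contradicts the separation condition $p_n(\overline{x},\overline{x}')>\ep$. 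Your closing claim that ``this step needs the fibres to be compact'' is in fact false: the paper's proof leaves slack ($\frac{\ep}3+\frac{\ep}3=\frac{2\ep}3<\ep$), so approximate minimizers suffice and no compactness or upper semicontinuity is ever used. Your argument is repaired the same way: take the base spanning set at scale $\frac{\ep}8$, pick $\overline{w}$ with $p_n(\overline{x},\overline{w})<\frac{\ep}4$ (which exists with no compactness, since the relevant infimum is $\le\frac{\ep}8$), and keep the fibre spanning sets at scale $\frac{\ep}4$; then two preimages of the same pair satisfy $p_n(\overline{x},\overline{x}')<\ep$, a genuine contradiction. As for finiteness, the fibre spanning numbers are always finite by Lemma~\ref{l:3.2}, and if the minimal $(p^b_n,\cdot)$-spanning cardinality of the base is infinite, then $\fin(\cdot)=\infty$, so $\hi(\vp_{0,\infty})=\infty$ and \eqref{eq:18b} holds trivially; no appeal to Remark~\ref{r:5.4} is needed.
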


\begin{proof}
The inequalities \eqref{eq:18} follow straightforwardly from the definitions. It remains to prove \eqref{eq:18b}. 

Let $p\in\U$, $\ep>0$ and $S$ be the maximal $(p^b_n,\frac{\ep}3)$-separated subset of $X$ (see Remark~\ref{r:zorn}). For each $x\in S$, let $M(x)$ be the maximal $(p_n,\frac{\ep}3)$-separated set in $\orbn(\vp_{0,\infty},x)$. Let $M:= \bigcup_{x\in S} M(x)$. We will show that $M\subset \orbn(\vp_{0,\infty})$ is a $(p_n,\ep)$-spanning set in $\orbn(\vp_{0,\infty})$.

For any $\overline{y}=(y,y_1,\ldots,y_{n-1})\in\orbn(\vp_{0,\infty})$, there exists $x\in S$ such that $p^b_n(y,x)<\frac{\ep}3$ (otherwise, we would have a contradiction with the maximality of $S$). Employing the definition of $p^b_n$, there exists $\overline{x}=(x,x_1,\ldots,x_{n-1})\in\orbn(\vp_{0,\infty},x)$ such that $p^b_n(\overline{y},\overline{x})<\frac{\ep}3$.

Since $M(x)$ is the maximal $(p_n,\frac{\ep}3)$-separated set in $\orbn(\vp_{0,\infty},x)$, there is $\overline{x'}=(x,x'_1,\ldots,x'_{n-1})\in\orbn(\vp_{0,\infty},x)$ such that $p_n(\overline{x},\overline{x'})<\frac{\ep}3$. Hence, $p_n(\overline{y},\overline{x'})<\frac{2\ep}{3}<\ep$. We can conclude that $M(x)$ is a $(p_n,\ep)$-spanning set in $\orbn(\vp_{0,\infty},x)$. 

The cardinality of $M$ can be estimated as
\begin{align*}
&r(\vp_{0,\infty},p,\frac{\ep}{3},n)\leq \card M = \card \left( \bigcup_{x\in S} M(x)\right) \leq \sup_{x\in S} (\card(M(x))) \cdot \card(S)\\
&\leq \sup_{x\in S} r(\vp_{0,\infty},p,\frac{\ep}3,n,x)\cdot \card S \leq \sup_{x\in S} r(\vp_{0,\infty},p,\frac{\ep}3,n,x)\cdot s(\vp_{0,\infty}, p_n^b, \ep),
\end{align*}
and consequently
\begin{equation*}
r(\vp_{0,\infty},p,\frac{\ep}{3},n)\leq \sup_{x\in S} r(\vp_{0,\infty},p,\frac{\ep}3,n,x)\cdot \fin(s(\vp_{0,\infty}, p_n^b, \ep)).
\end{equation*}

The following computation concludes the proof, namely
\begin{align*}
h(\vp_{0,\infty}) &\leq \sup_{p\in\U,\ep>0} \limsup_{n\to\infty} \frac1{n} \log r(\vp_{0,\infty},p,\frac{\ep}{3},n)\\
&\leq \sup_{p\in\U,\ep>0} \limsup_{n\to\infty} \frac1{n} \log \sup_{x\in S} r(\vp_{0,\infty},p,\frac{\ep}3,n,x)\cdot \fin(s(\vp_{0,\infty}, p_n^b, \ep))\\
&\leq \sup_{p\in\U,\ep>0} \limsup_{n\to\infty} \frac1{n} \log \sup_{x\in S}r(\vp_{0,\infty},p,\frac{\ep}3,n,x) \\
&+ \sup_{p\in\U,\ep>0} \limsup_{n\to\infty} \frac1{n} \log \fin(s(\vp_{0,\infty}, p_n^b, \ep)) \\
&= \hm(\vp_{0,\infty}) + \hi(\vp_{0,\infty}).
\end{align*}
\end{proof}

\begin{remark}
The first version of \eqref{eq:18} and \eqref{eq:18b} was formulated by Hurley in \cite{Hu} for the inversions of autonomous single-valued continuous maps in compact metric spaces. For its extension to nonautonomous dynamical systems, see e.g. \cite{ZZH}. Recently, a multivalued version was established in \cite{WZZ} for autonomous u.s.c. maps in compact metric spaces. Theorem~\ref{t:5.1} therefore generalizes all these quoted results.
\end{remark}

\begin{remark}
If in Definitions~\ref{d:3.8} and \ref{d:5.1}-\ref{d:5.3}, the multivalued maps $\vp_j$, $j\in\N\cup\{0\}$, are specially admissible in the sense of G\'{o}rniewicz (for the definition, see Section~\ref{s:2}), then these maps as well as their finite compositions $\vp=\vp_{n-1}\circ\ldots\circ\vp_0$, $n\in\N$, can be always determined by admissible pairs $(r_j,q_j)$, $j\in\N\cup\{0\}$, and $(r,q)$ of single-valued continuous maps in order $\vp_j=q_j\circ r^{-1}_j$, $j\in\N\cup\{0\}$, and $\vp=q\circ r^{-1}$ respectively. If the space $X$ is still a compact absolute retract (e.g. a convex compact metric space), then these maps and their finite compositions possess fixed points (for more details, see e.g. \cite[Chapters I.4 and I.6]{AG}). The parametric topological entropies $h$, $\hp$, $\hm$, $\hi$ of these maps are then intimately related to the exponential growth of the number of fixed points under expanding compositions of maps.
\end{remark}

In order to obtain ``pointwise'' versions of Definition~\ref{d:4.2}, let us again consider compact uniform spaces $X=(X,\U)$, $Z=(Z,\V)$ and the sequence $(r,q)_{0,\infty}$ of selected pairs $(r_j,q_j)^\infty_{j=0}$, where $r_j: Z\to X$, $j\in\N\cup\{0\}$, are single-valued surjections and $q_j: Z\to X$, $j\in\N\cup\{0\}$, are single-valued maps.

Defining, for a given $z\in Z$,
\begin{equation*}
\coinn((r,q)_{0,\infty},z) = \{(z_0,z_1,\ldots,z_{n-1})\in\coinn((r,q)_{0,\infty}): z_0=z\},
\end{equation*}
we obviously have
\begin{align}\label{eq:19}
\coinn((r,q)_{0,\infty}) &:= \bigcup_{z\in Z} \coinn((r,q)_{0,\infty},z) \mbox{ and }\\
\coinn((r,q)_{0,\infty},z)&\subset\coinn((r,q)_{0,\infty}) \mbox{, for every $z\in Z$, $n\in\N$.}
\end{align}

We say that $S\subset\coinn((r,q)_{0,\infty},z)$ is \emph{$(p,\ep,n,z)$-separated} for $(r,q)_{0,\infty}$, if it is a $(p,\ep,n)$-separated subset (defined in Section~\ref{s:4}). We say that $R\subset \coinn((r,q)_{0,\infty},z)$ is \emph{$(p,\ep,n,z)$-spanning} for $(r,q)_{0,\infty}$, if it is a $(p,n,\ep)$-spanning subset (defined in Section~\ref{s:4}). For the definition of pseudometric $p_n$, see \eqref{eq:15}.

\begin{definition}\label{d:5.4}
Let $X=(X,\U)$, $Z=(Z,\V)$ be compact uniform spaces and $(r,q)_{0,\infty}$ be a sequence of selected pairs $q_j,r_j:Z\to X$, $j\in\N\cup\{0\}$, where $r_j$, $j\in\N\cup\{0\}$, are single-valued surjections and $q_j$, $j\in\N\cup\{0\}$, are single-valued maps.

Denoting, for a given $z\in Z$, by $s_Z((r,q)_{0,\infty},p,\ep,n,z)$ the largest cardinality of a $(p,\ep,n,z)$-separated set for $(r,q)_{0,\infty}$, resp. by $r_Z((r,q)_{0,\infty},p,\ep,n,z)$ the smallest cardinality of a $(p,\ep,n,z)$-spanning set for $(r,q)_{0,\infty}$, we take (cf. \eqref{eq:19})
\begin{align*}
\hp^Z((r,q)_{0,\infty},p,\ep,z)&:= \limsup_{n\to\infty} \frac1{n} \log s_Z(\vp_{0,\infty},p,\ep,n,z)\\
&=\limsup_{n\to\infty} \frac1{n}\log r_Z(\vp_{0,\infty},p,\ep,n,z).
\end{align*}
The \emph{parametric pointwise entropy} $\hp^Z((r,q)_{0,\infty})$ of $\vp_{0,\infty}=(r,q)_{0,\infty}$ is defined to be 
\begin{equation*}
\hp^Z((r,q)_{0,\infty}):= \sup_{z\in Z} \sup_{p\in \V, \ep>0} \hp^Z((r,q)_{0,\infty},p,\ep,z).
\end{equation*}
The \emph{parametric pointwise entropy} $\hm^Z((r,q)_{0,\infty})$ of $\vp_{0,\infty}=(r,q)_{0,\infty}$ is defined to be 
\begin{align*}
\hm^Z((r,q)_{0,\infty},p,\ep,z)&:= \sup_{p\in \V, \ep>0} \limsup_{n\to\infty} \frac1{n} \log [\sup_{z\in Z} s_Z(\vp_{0,\infty},p,\ep,n,z)]\\
&=\sup_{p\in \V, \ep>0}\limsup_{n\to\infty} \frac1{n}\log [\sup_{z\in Z} r_Z(\vp_{0,\infty},p,\ep,n,z)].
\end{align*}
\end{definition}

Defining the pseudometric $p_n^b$ on $Z$ as
\begin{align*}
p_n^b(u,v) &:= \pnH\left(\coinn((r,q)_{0,\infty},u),\coinn((r,q)_{0,\infty},v)\right) \\
&=\max\{\sup_{\overline{u}\in\coinn((r,q)_{0,\infty},u)} \inf_{\overline{v}\in\coinn((r,q)_{0,\infty},v)}  p_n(\overline{u},\overline{v}),\\
&\sup_{\overline{v}\in\coinn((r,q)_{0,\infty},v)} \inf_{\overline{u}\in\coinn((r,q)_{0,\infty},u)}  p_n(\overline{v},\overline{u})\},
\end{align*}
where $p_n(\overline{u},\overline{v}):= \max \{p(u_i,v_i):i=1,\ldots,n-1\}$, $\overline{u}=(u_0,\ldots,u_{n-1})\in Z^n$, $\overline{v}=(v_0,\ldots,v_{n-1})\in Z^n$, $u=u_0\in Z$, $v=v_0\in Z$, we can formulate a ``branch'' version of Definition~\ref{d:5.3} as follows.

\begin{definition}\label{d:5.5}
Let $X=(X,\U)$, $Z=(Z,\V)$ be compact uniform spaces and $(r,q)_{0,\infty}$ be a sequence of selected pairs $q_j,r_j:Z\to X$, $j\in\N\cup\{0\}$, where $r_j$, $j\in\N\cup\{0\}$, are single-valued surjections and $q_j$, $j\in\N\cup\{0\}$, are single-valued maps.

%Denoting by $s((r,q)_{0,\infty},p^b_n,\ep,Z)$ the supremum of  cardinalities of $(p_n^b,\ep)$-separated sets of $Z$, resp. by $r((r,q)_{0,\infty},p^b_n,\ep,Z)$ the smallest cardinality of a $(p^b_n,\ep)$-spanning set in $Z$, the \emph{parametric branch entropy} $\hi^{\sepp}((r,q)_{0,\infty})$ of $(r,q)_{0,\infty}$ is defined to be
%\begin{equation*}
%\hi^{\sepp}((r,q)_{0,\infty}):= \sup_{p\in\V, \ep>0} \limsup_{n\to\infty} \frac1{n} \log \fin(s((r,q)_{0,\infty},p^b_n,\ep,Z)).
%\end{equation*}
%The parametric branch entropy $\hi^{\spa}((r,q)_{0,\infty})$ of $(r,q)_{0,\infty}$ is defined to be
%\begin{equation*}
%\hi^{\spa}((r,q)_{0,\infty}):= \sup_{p\in\V,\ep>0} \limsup_{n\to\infty} \frac1{n} \log \fin(r((r,q)_{0,\infty},p^b_n,\ep,Z)).
%\end{equation*}
%Let us recall, that
%\begin{equation*}
%\fin(\kappa)=
%\begin{cases}
%&\kappa, \mbox{ if $\kappa$ is finite cardinal number,}\\
%&\infty, \mbox{ if $\kappa$ is infinite cardinal number.}
%\end{cases}
%\end{equation*}
Denoting by $s((r,q)_{0,\infty},p^b_n,\ep,Z)$ the supremum of  cardinalities of $(p_n^b,\ep)$-separated sets of $Z$, resp. by $r((r,q)_{0,\infty},p^b_n,\ep,Z)$ the smallest cardinality of a $(p^b_n,\ep)$-spanning set in $Z$, the \emph{parametric branch entropy} $\hi^Z((r,q)_{0,\infty})$ of $(r,q)_{0,\infty}$ is defined to be (the equality follows from Lemma~\ref{l:sep-span}, cf. Definition~\ref{d:5.3})
\begin{align*}
\hi^{Z}((r,q)_{0,\infty})&:= \sup_{p\in\V, \ep>0} \limsup_{n\to\infty} \frac1{n} \log \fin(s((r,q)_{0,\infty},p^b_n,\ep,Z))\\
&=\sup_{p\in\V,\ep>0} \limsup_{n\to\infty} \frac1{n} \log \fin(r((r,q)_{0,\infty},p^b_n,\ep,Z)),
\end{align*}
where
\begin{equation*}
\fin(\kappa)=
\begin{cases}
&\kappa, \mbox{ if $\kappa$ is a finite cardinal number,}\\
&\infty, \mbox{ if $\kappa$ is an infinite cardinal number.}
\end{cases}
\end{equation*}
\end{definition}

\begin{remark}
If $q_j,r_j:Z\to X$, $j\in\N\cup\{0\}$, are still (single-valued) continuous maps, then by the similar arguments (when applying Lemma~\ref{l:1b}, instead of Lemma~\ref{l:1}) as in Remark~\ref{r:5.4}, the operator $\fin$ can be omitted and we can use for the cardinalities the attribute maximal rather than supremal in Definition~\ref{d:5.5}.
\end{remark}

\begin{remark}
Observe that if $r_j^{-1}$, $j\in\N\cup\{0\}$, are single-valud maps, then $r_j$ must be homeomorphisms, by which $\hp^Z((r,q)_{0,\infty})=\hm^Z((r,q)_{0,\infty})=0$, and $h^Z((r,q)_{0,\infty})= \hi^Z((r,q)_{0,\infty})$. Otherwise, if $q_j\circ r^{-1}_j$, $j\in\N\cup\{0\}$, are single-valued, but $r_j$ are not homeomorphisms, then $\hp^Z((r,q)_{0,\infty})$, $\hm^Z((r,q)_{0,\infty})$ do not necessarily vanish and $\hi^Z((r,q)_{0,\infty})$ does not necessarily equal to $h_Z((r,q)_{0,\infty})$ (see Example~\ref{e:4.3} above).
\end{remark}

\begin{figure}[h]
  \centering
  \includegraphics[width=0.8\textwidth]{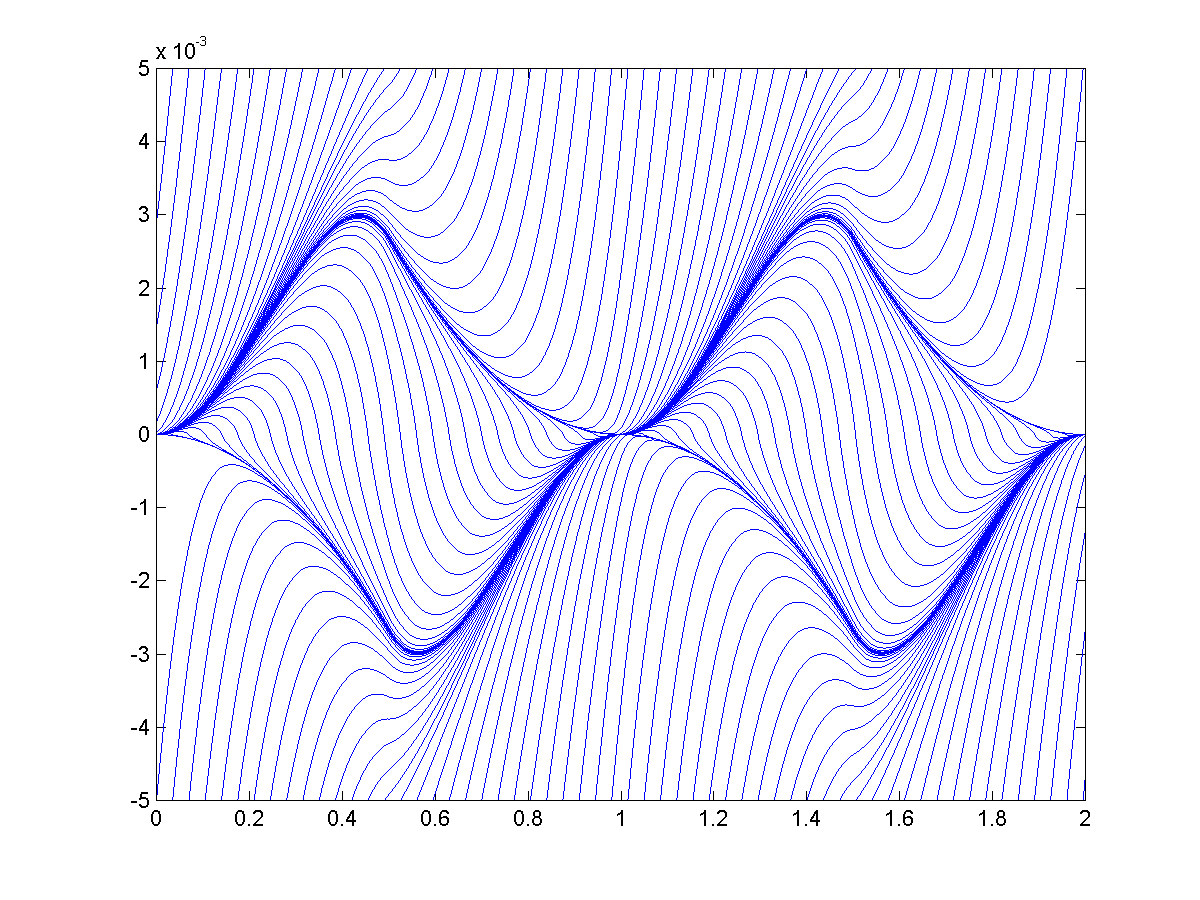}
  \caption{Part of trajectories of the equation $x' = \sqrt{\abs{x}}-\frac1{8\pi}\abs{\arcsin(\sin \pi t)}$.}
  \label{f:1}
\end{figure}

\begin{theorem}\label{t:5.2}
Let $X$, $Z$ be compact Hausdorff spaces and $(r,q)_{0,\infty}$ be a sequence of selected pairs $q_j,r_j:Z\to X$, $j\in\N\cup\{0\}$, where $r_j$, $j\in\N\cup\{0\}$, are single-valued surjections and $q_j$, $j\in\N\cup\{0\}$, are single-valued maps. Then the inequalitites
\begin{equation*}
\hp^Z((r,q)_{0,\infty})\leq \hm^Z((r,q)_{0,\infty}) \leq h_Z((r,q)_{0,\infty}) \leq \hm^Z((r,q)_{0,\infty}) + \hi^Z((r,q)_{0,\infty})
\end{equation*}
are satisfied for the topological entropies in the sense of Definitions~\ref{d:4.2}, \ref{d:5.4}, \ref{d:5.5}, respectively.
\end{theorem}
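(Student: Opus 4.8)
The plan is to mirror, essentially line by line, the proof of Theorem~\ref{t:5.1}, with the compact space $Z=(Z,\V)$ playing the role of $X=(X,\U)$, the coincidence orbits $\coinn((r,q)_{0,\infty})$ playing the role of the orbits $\orbn(\vp_{0,\infty})$, and the fibers $\coinn((r,q)_{0,\infty},z)$ playing the role of $\orbn(\vp_{0,\infty},x)$. The pseudometrics are arranged (the product pseudometric $p_n$ on $Z^n$ from \eqref{eq:15} and the branch pseudometric $p_n^b$ on $Z$ preceding Definition~\ref{d:5.5}) precisely so that $\hp^Z$, $\hm^Z$, $\hi^Z$ and the reference entropy $h_Z$ occupy the same structural positions as $\hp$, $\hm$, $\hi$, $h$ in Section~\ref{s:3}. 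The maximal separated and minimal spanning cardinalities manipulated below exist and are finite by Lemma~\ref{l:3.2} (since $p_n\in\V^n$, see the reasoning after Definition~\ref{d:4.2}); the branch quantities $s((r,q)_{0,\infty},p_n^b,\ep,Z)$ are rendered well-defined by the operator $\fin$, and are moreover finite when $q_j,r_j$ are continuous, by the compactness argument of Remark~\ref{r:5.4} applied through Lemma~\ref{l:1b} and Proposition~\ref{p:hyper2}.

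First I would dispatch the two left-hand inequalities directly from Definition~\ref{d:5.4}. For $\hp^Z\leq\hm^Z$ one fixes $z\in Z$, $p\in\V$, $\ep>0$ and uses $s_Z((r,q)_{0,\infty},p,\ep,n,z)\leq\sup_{z'\in Z}s_Z((r,q)_{0,\infty},p,\ep,n,z')$ before passing to $\limsup_{n\to\infty}$ and then to the suprema. For $\hm^Z\leq h_Z$ one observes that, by \eqref{eq:19}, every $(p_n,\ep)$-separated subset of a fiber $\coinn((r,q)_{0,\infty},z)$ is also a $(p_n,\ep)$-separated subset of the whole $\coinn((r,q)_{0,\infty})$, whence $\sup_{z\in Z}s_Z((r,q)_{0,\infty},p,\ep,n,z)\leq s_Z((r,q)_{0,\infty},p,\ep,n)$. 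Both steps are the verbatim fiber-analogues of the observations opening the proof of Theorem~\ref{t:5.1}.

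The substance is the right-hand inequality $h_Z((r,q)_{0,\infty})\leq\hm^Z((r,q)_{0,\infty})+\hi^Z((r,q)_{0,\infty})$, for which I would repeat the construction in the proof of Theorem~\ref{t:5.1}. Fixing $p\in\V$ and $\ep>0$, let $S\subset Z$ be a maximal $(p_n^b,\tfrac{\ep}{3})$-separated set (it exists by Remark~\ref{r:zorn}) and, for each $z\in S$, let $M(z)$ be a maximal $(p_n,\tfrac{\ep}{3})$-separated set in the fiber $\coinn((r,q)_{0,\infty},z)$; set $M:=\bigcup_{z\in S}M(z)$. One then checks that $M$ is a $(p_n,\ep)$-spanning set in $\coinn((r,q)_{0,\infty})$: given $\overline{w}$ with base point $w$, maximality of $S$ yields $z\in S$ with $p_n^b(w,z)<\tfrac{\ep}{3}$; the definition of $p_n^b$ as a Hausdorff $\pnH$-distance between fibers yields $\overline{z}\in\coinn((r,q)_{0,\infty},z)$ with $p_n(\overline{w},\overline{z})<\tfrac{\ep}{3}$; and the spanning property of $M(z)$ (Lemma~\ref{l:sep-span}(i)) yields $\overline{z'}\in M(z)$ with $p_n(\overline{z},\overline{z'})<\tfrac{\ep}{3}$, so $p_n(\overline{w},\overline{z'})<\ep$ by the triangle inequality. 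Estimating $\card M\leq\sup_{z\in S}\card M(z)\cdot\card S\leq\sup_{z\in Z}r_Z((r,q)_{0,\infty},p,\tfrac{\ep}{3},n,z)\cdot\fin(s((r,q)_{0,\infty},p_n^b,\ep,Z))$, then taking logarithms, dividing by $n$, passing to $\limsup_{n\to\infty}$, splitting the logarithm of the product into a sum, and finally taking $\sup_{p\in\V,\ep>0}$ reproduces $h_Z\leq\hm^Z+\hi^Z$ exactly as in Theorem~\ref{t:5.1}.

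The step I expect to require the most care is the coordinate bookkeeping in the spanning argument, namely reconciling the pseudometric $p_n$ of \eqref{eq:15}, which compares all coordinates $i=0,\dots,n-1$, with the inner pseudometric building $p_n^b$, which compares only the free coordinates $i=1,\dots,n-1$ (the base points being fixed inside each fiber). The selected $\overline{z'}\in M(z)$ is forced close to $\overline{w}$ only on the free coordinates, so the base-point discrepancy $p(w,z)$ must still be absorbed. In the orbit case of Theorem~\ref{t:5.1} this is automatic, because $p_n^b$ there dominates $p(x,y)$; for coincidence orbits the same conclusion follows once one notes that the base coordinate may be incorporated into $p_n^b$, or, keeping $p_n^b$ as stated, by refining $M$ through a single fixed finite $(p,\tfrac{\ep}{3})$-spanning set of the compact space $Z$. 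Since the cardinality of such a spanning set depends on $\ep$ but not on $n$, this refinement multiplies $\card M$ by an $n$-independent factor and therefore leaves $\limsup_{n\to\infty}\tfrac1n\log(\cdot)$, and hence the final estimate, unchanged.
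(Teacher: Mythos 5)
Your proposal is correct and is essentially the paper's own proof: the paper dispatches $\hp^Z\leq\hm^Z\leq h_Z$ ``directly from the definitions'' and then states that $h_Z((r,q)_{0,\infty})\leq\hm^Z((r,q)_{0,\infty})+\hi^Z((r,q)_{0,\infty})$ ``can be verified quite analogously as in the proof of Theorem~\ref{t:5.1}'', which is precisely the $S$, $M(z)$, $M=\bigcup_{z\in S}M(z)$ construction you reproduce. The coordinate subtlety you flag in your last paragraph is genuine and is silently glossed over by the paper --- its $p_n^b$ for coincidence orbits is indeed built from the pseudometric comparing only the coordinates $i=1,\ldots,n-1$, unlike the orbit case where $p_n^b(x,y)\geq p(x,y)$ holds automatically --- and your first remedy (incorporating the base coordinate into $p_n^b$, restoring the verbatim transfer of Theorem~\ref{t:5.1}) is the clean one; just note that your second remedy cannot simply recombine base points from a finite $(p,\tfrac{\ep}{3})$-spanning set of $Z$ with the free coordinates of elements of $M$, since such tuples need not lie in $\coinn((r,q)_{0,\infty})$ (spanning sets are required by Definition~\ref{d:4.2} to consist of coincidence orbits), so one must instead select, for each such pair, a witnessing coincidence orbit whenever one exists --- after which the $n$-independent multiplicative factor disappears in $\limsup_{n\to\infty}\tfrac1n\log(\cdot)$ exactly as you say.
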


\begin{proof}
The inequalities $\hp^Z((r,q)_{0,\infty})\leq \hm^Z((r,q)_{0,\infty}) \leq h_Z((r,q)_{0,\infty})$ follow directly from the definitions of the respective entropies.

The inequality $h_Z((r,q)_{0,\infty}) \leq \hm^Z((r,q)_{0,\infty}) + \hi^Z((r,q)_{0,\infty})$ can be verified quite analogously as in the proof of Theorem~\ref{t:5.1} for \eqref{eq:18b}.

\end{proof}

\begin{remark}
One can easily check that
\begin{equation*}
\hp^Z((r,q)_{0,\infty}) = \hm^Z((r,q)_{0,\infty}) = h_Z((r,q)_{0,\infty}) = \log 2
\end{equation*}
holds for the sequence $(r,q)_{0,\infty}$ considered in Example~\ref{e:4.3}.

The diagrams of such and more sophisticated sequences of maps $q_j$, $r_j$, $j\in\N\cup\{0\}$, correspond to the graphs of solutions of (possibly impulsive) differential equations and inclusions (see e.g. \cite{A2,A3,AJ,AL2,JS,RS}). Positive values of entropies thus can refer about the complexity of their trajectories behaviour.

In Figure~\ref{f:1}, a part of trajectories of the differential equation $x'=\sqrt{\abs{x}} - \frac1{8\pi}\abs{\arcsin(\sin \pi t))})$ is plotted such that $h((r,q)_{0,\infty})=0$, but $h_Z((r,q)_{0,\infty})=\infty$.
\end{remark}

\section{Concluding remarks}
As already emphasized, Definitions~\ref{d:3.8} and \ref{d:5.1}-\ref{d:5.3} of parametric entropy deal with arbitrary multivalued maps in compact Hausdorff spaces. As far as we know, there is only one another definition, but of a different type, which concerns arbitrary multivalued autonomous maps in metric spaces (see \cite{CMM}).

It is not hard to prove the inequalities
\begin{equation*}
h^{\operatorname{span}}_{\operatorname{CMM}}(\vp) \leq h^{\operatorname{sep}}_{\operatorname{CMM}}(\vp) \leq h(\vp),
\end{equation*}
for arbitrary multivalued maps $\vp:X\multimap X$ in a compact metric space $X$, where $h^{\operatorname{span}}_{\operatorname{CMM}}$, $h^{\operatorname{sep}}_{\operatorname{CMM}}$ stand for the topological entropies defined in \cite{CMM}, and $h$ satisfies a special (nonparametric) version of Definition~\ref{d:3.8}.

Since analogous inequalities can be also obtained for some further definitions of even parametric topological entropy (see e.g. \cite{AL2}), all the related illustrative examples, including those in \cite{CMM}, can serve as lower estimates, when applying Definition~\ref{d:3.8}.

Definition~\ref{d:3.8} certainly generalizes the definitions of topological entropy in \cite{Ci,JS,KLP,NS,Pw,PLB,Ma,MZ} for arbitrary (i.e. also discontinuous) single-valued maps having a weaker regularity than continuity, like almost or piecewise continuity. On the other hand, we have shown in Section~\ref{s:4} that, for effective calculations or estimates, at least a certain amount of regularity like upper semicontinuity should be imposed on given multivalued maps under consideration. Since u.s.c. maps become in the single-valued case continuous, the obtained results for calculations in the quoted papers for discontinuous single-valued maps cannot be deduced from ours. Definitions~\ref{d:5.1} and~\ref{d:5.2} have no meaning for single-valued maps at all.

The Markov (single-valued) interval functions belong to a suitable class for calculating the topological entropy. Recently, their definition was extended to a multivalued setting and applied e.g. in \cite{AK,EK}. Although a further extension was done for multivalued maps on compact metric spaces in \cite{BCK}, appropriate applications for them seem to be technically difficult. Topological entropy of single-valued maps can be rather curiously explored also by means of topological invariants like the Conley index, in the frame of multivalued maps (see e.g. \cite{Bak,Vi}).

Let us finally note that also the related notions to topological entropy, the metric entropy and the topological pressure, were recently examined for multivalued maps in compact metric spaces (see e.g. \cite{VS,XZ}). Their parametric versions will be examined by ourselves elsewhere.

%\nocite{*}
%\begin{thebibliography}{27}
\bibliography{AL-GenOfKT-arxiv}\bibliographystyle{siam}

\begin{thebibliography}{10}

\bibitem{AKM}
{\sc R.~L. Adler, A.~G. Konheim, and M.~H. McAndrew}, {\em Topological
  entropy}, Trans. Amer. Math. Soc., 114 (1965), pp.~309--319.

\bibitem{AK}
{\sc L.~Alvin and J.~P. Kelly}, {\em Topological entropy of {M}arkov set-valued
  functions}, Ergodic Theory Dynam. Systems, 41 (2021), pp.~321--337.

\bibitem{A1}
{\sc J.~Andres}, {\em Chaos for multivalued maps and induced hyperspace maps},
  Chaos, Solitons \& Fractals, 138 (2020), pp.~109898, 8.

\bibitem{A2}
\leavevmode\vrule height 2pt depth -1.6pt width 23pt, {\em Chaos for
  differential equations with multivalued impulses}, Internat. J. Bifur. Chaos
  Appl. Sci. Engrg., 31 (2021), pp.~Paper No. 2150113, 16.

\bibitem{A3}
\leavevmode\vrule height 2pt depth -1.6pt width 23pt, {\em Parametric
  topological entropy and differential equations with time-dependent impulses
  {II}: multivalued case}, J. Differential Equations, 367 (2023), pp.~783--803.

\bibitem{AG}
{\sc J.~Andres and L.~G\'{o}rniewicz}, {\em Topological fixed point principles
  for boundary value problems}, vol.~1 of Topological Fixed Point Theory and
  Its Applications, Kluwer Academic Publishers, Dordrecht, 2003.

\bibitem{AJ}
{\sc J.~Andres and J.~Jezierski}, {\em Ivanov's theorem for admissible pairs
  applicable to impulsive differential equations and inclusions on tori},
  Mathematics, 8 (2020), pp.~1--14.

\bibitem{AL1}
{\sc J.~Andres and P.~Ludv\'{\i}k}, {\em Topological entropy of multivalued
  maps in topological spaces and hyperspaces}, Chaos Solitons Fractals, 160
  (2022), pp.~Paper No. 112287, 11.

\bibitem{AL3}
\leavevmode\vrule height 2pt depth -1.6pt width 23pt, {\em Parametric
  topological entropy for multivalued maps and differential inclusions with
  nonautonomous impulses}, Internat. J. Bifur. Chaos Appl. Sci. Engrg., 33
  (2023), pp.~Paper No. 2350113, 13.

\bibitem{AL2}
\leavevmode\vrule height 2pt depth -1.6pt width 23pt, {\em Parametric
  topological entropy of families of multivalued maps in topological spaces and
  induced hyperspace maps}, Commun. Nonlinear Sci. Numer. Simul., 125 (2023),
  pp.~Paper No. 107395, 16.

\bibitem{AM2}
{\sc A.~Anu\v{s}i\'{c} and C.~Mouron}, {\em Strongly commuting interval maps},
  Fund. Math., 257 (2022), pp.~39--68.

\bibitem{AM1}
\leavevmode\vrule height 2pt depth -1.6pt width 23pt, {\em Topological entropy
  of diagonal maps on inverse limit spaces}, Topol. Methods Nonlinear Anal., 59
  (2022), pp.~867--895.

\bibitem{Bak}
{\sc A.~W. Baker}, {\em Lower bounds on entropy via the {C}onley index with
  application to time series}, Topology Appl., 120 (2002), pp.~333--354.

\bibitem{BCK}
{\sc I.~Bani\v{c}, M.~\v{C}repnjak, and T.~Kac}, {\em Markov set-valued
  functions on compact metric spaces}, Glasnik Matemati\v{c}ki,  (to appear).

\bibitem{Bo}
{\sc R.~Bowen}, {\em Entropy for group endomorphisms and homogeneous spaces},
  Trans. Amer. Math. Soc., 153 (1971), pp.~401--414.

\bibitem{CMM}
{\sc D.~Carrasco-Olivera, R.~Metzger~Alvan, and C.~A. Morales~Rojas}, {\em
  Topological entropy for set-valued maps}, Discrete Contin. Dyn. Syst. Ser. B,
  20 (2015), pp.~3461--3474.

\bibitem{CN}
{\sc W.-C. Cheng and S.~E. Newhouse}, {\em Pre-image entropy}, Ergodic Theory
  Dynam. Systems, 25 (2005), pp.~1091--1113.

\bibitem{Ci}
{\sc M.~{\v{C}}iklov\'{a}}, {\em Dynamical systems generated by functions with
  connected {$G_\delta$} graphs}, Real Anal. Exchange, 30 (2004/05),
  pp.~617--637.

\bibitem{CP}
{\sc W.~Cordeiro and M.~J. Pac\'{\i}fico}, {\em Continuum-wise expansiveness
  and specification for set-valued functions and topological entropy}, Proc.
  Amer. Math. Soc., 144 (2016), pp.~4261--4271.

\bibitem{Di}
{\sc E.~I. Dinaburg}, {\em A connection between various entropy
  characterizations of dynamical systems}, Izv. Akad. Nauk SSSR Ser. Mat., 35
  (1971), pp.~324--366.

\bibitem{EK}
{\sc G.~Erceg and J.~Kennedy}, {\em Topological entropy on closed sets in
  {$[0,1]^2$}}, Topology Appl., 246 (2018), pp.~106--136.

\bibitem{FFN}
{\sc D.~Fiebig, U.-R. Fiebig, and Z.~Nitecki}, {\em Entropy and preimage sets},
  Ergodic Theory and Dynamical Systems, 23 (2003), pp.~1785 -- 1806.

\bibitem{Ho}
{\sc B.~M. Hood}, {\em Topological entropy and uniform spaces}, J. London Math.
  Soc. (2), 8 (1974), pp.~633--641.

\bibitem{HP}
{\sc S.~Hu and N.~S. Papageorgiou}, {\em Handbook of multivalued analysis.
  {V}ol. {I}}, vol.~419 of Mathematics and its Applications, Kluwer Academic
  Publishers, Dordrecht, 1997.
\newblock Theory.

\bibitem{HWZ}
{\sc X.~Huang, X.~Wen, and F.~Zeng}, {\em Pre-image entropy of nonautonomous
  dynamical systems}, J. Syst. Sci. Complex., 21 (2008), pp.~441--445.

\bibitem{Hu}
{\sc M.~Hurley}, {\em On topological entropy of maps}, Ergodic Theory Dynam.
  Systems, 15 (1995), pp.~557--568.

\bibitem{Is}
{\sc J.~R. Isbell}, {\em Uniform spaces}, Mathematical Surveys, No. 12,
  American Mathematical Society, Providence, R.I., 1964.

\bibitem{JS}
{\sc N.~Jaque and B.~San~Mart\'{\i}n}, {\em Topological entropy for
  discontinuous semiflows}, J. Differential Equations, 266 (2019),
  pp.~3580--3600.

\bibitem{Ke}
{\sc J.~L. Kelley}, {\em General topology}, Graduate Texts in Mathematics, No.
  27, Springer-Verlag, New York-Berlin, 1975.
\newblock Reprint of the 1955 edition [Van Nostrand, Toronto, Ont.].

\bibitem{KM}
{\sc J.~P. Kelly and K.~McGoff}, {\em Entropy conjugacy for {M}arkov multi-maps
  of the interval}, Discrete Contin. Dyn. Syst., 41 (2021), pp.~2071--2094.

\bibitem{KT}
{\sc J.~P. Kelly and T.~Tennant}, {\em Topological entropy of set-valued
  functions}, Houston J. Math., 43 (2017), pp.~263--282.

\bibitem{KS}
{\sc S.~Kolyada and L.~Snoha}, {\em Topological entropy of nonautonomous
  dynamical systems}, Random Comput. Dynam., 4 (1996), pp.~205--233.

\bibitem{KLP}
{\sc E.~Korczak-Kubiak, A.~Loranty, and R.~J. Pawlak}, {\em On the topological
  entropy of discontinuous functions. {S}trong entropy points and {Z}ahorski
  classes}, in Monograph on the occasion of 100th birthday anniversary of
  {Z}ygmunt {Z}ahorski, Wydaw. Politech. \'{S}l., Gliwice, 2015, pp.~109--123.

\bibitem{Ma}
{\sc M.~Malkin}, {\em On continuity of entropy of discontinuous mappings of the
  interval}, Selecta Mathematica Sovietica, 8 (1989), pp.~131--139.

\bibitem{Mi51}
{\sc E.~Michael}, {\em Topologies on spaces of subsets}, Trans. Amer. Math.
  Soc., 71 (1951), pp.~152--182.

\bibitem{MZ}
{\sc M.~Misiurewicz and K.~Ziemian}, {\em Horseshoes and entropy for piecewise
  continuous piecewise monotone maps}, in From phase transitions to chaos,
  World Sci. Publ., River Edge, NJ, 1992, pp.~489--500.

\bibitem{NS}
{\sc T.~Natkaniec and P.~Szuca}, {\em On {P}awlak's problem concerning entropy
  of almost continuous functions}, Colloq. Math., 121 (2010), pp.~107--111.

\bibitem{NP}
{\sc Z.~Nitecki and F.~Przytycki}, {\em Preimage entropy for mappings},
  Internat. J. Bifur. Chaos Appl. Sci. Engrg., 9 (1999), pp.~1815--1843.

\bibitem{Ni}
{\sc Z.~H. Nitecki}, {\em Topological entropy and the preimage structure of
  maps}, Real Anal. Exchange, 29 (2003/04), pp.~9--41.

\bibitem{Pa}
{\sc J.~Pachl}, {\em Uniform spaces and measures}, vol.~30 of Fields Institute
  Monographs, Springer, New York; Fields Institute for Research in Mathematical
  Sciences, Toronto, ON, 2013.

\bibitem{Pw}
{\sc R.~J. Pawlak}, {\em On the entropy of {D}arboux functions}, Colloq. Math.,
  116 (2009), pp.~227--241.

\bibitem{PLB}
{\sc R.~J. Pawlak, A.~Loranty, and A.~B\polhk{a}kowska}, {\em On the
  topological entropy of continuous and almost continuous functions}, Topology
  Appl., 158 (2011), pp.~2022--2033.

\bibitem{RS}
{\sc B.~E. Raines and D.~R. Stockman}, {\em Fixed points imply chaos for a
  class of differential inclusions that arise in economic models}, Trans. Amer.
  Math. Soc., 364 (2012), pp.~2479--2492.

\bibitem{RT}
{\sc B.~E. Raines and T.~Tennant}, {\em The specification property on a
  set-valued map and its inverse limit}, Houston J. Math., 44 (2018),
  pp.~665--677.

\bibitem{Sh}
{\sc H.~Shao}, {\em Topological entropy of nonautonomous dynamical systems on
  uniform spaces}, J. Differential Equations, 365 (2023), pp.~100--126.

\bibitem{SV}
{\sc D.~Sherwell and V.~Visaya}, {\em Complexity of multivalued maps}, World
  Academy of Science, Engineering and Technology, 53 (2011), pp.~717--720.

\bibitem{Vi}
{\sc M.~V.~V. Visaya}, {\em A lower estimate of the topological entropy from a
  one-dimensional reconstruction of time series}, J. Math. Kyoto Univ., 46
  (2006), pp.~637--655.

\bibitem{VS}
{\sc K.~J. Vivas and V.~F. Sirvent}, {\em Metric entropy for set-valued maps},
  Discrete Contin. Dyn. Syst. Ser. B, 27 (2022), pp.~6589--6604.

\bibitem{WZZ}
{\sc X.~Wang, Y.~Zhang, and Y.~Zhu}, {\em On various entropies of set-valued
  maps}, J. Math. Anal. Appl., 524 (2023), pp.~Paper No. 127097, 37.

\bibitem{WZ}
{\sc W.~Wu and Y.~Zhu}, {\em Entropy via preimage structure}, Adv. Math., 406
  (2022), pp.~Paper No. 108483, 45.

\bibitem{XZ}
{\sc Z.~Xing and L.~Zhang}, {\em Topological pressure for set-value map},
  Theoretical Mathematics \& Applications, 8 (2018), pp.~1--19.

\bibitem{YWW}
{\sc Y.-j. Yang, L.~Wang, and W.~Wang}, {\em Some results on entropy dimension
  for non-autonomous systems}, Appl. Math. J. Chinese Univ. Ser. B, 35 (2020),
  pp.~281--292.

\bibitem{ZZH}
{\sc J.~L. Zhang, Y.~J. Zhu, and L.~F. He}, {\em Preimage entropies of
  nonautonomous dynamical systems}, Acta Math. Sinica (Chinese Ser.), 48
  (2005), pp.~693--702.

\end{thebibliography}

\end{document}